\documentclass[11pt]{amsart}

\usepackage[T1]{fontenc}
\usepackage[utf8]{inputenc}
\usepackage{amsmath,amssymb,amsthm,mathtools,mathrsfs}
\usepackage{enumitem}
\usepackage{booktabs,array,tabularx}
\usepackage{microtype}
\usepackage[margin=1in]{geometry}
\usepackage{xcolor}
\usepackage{hyperref}
\usepackage{aliascnt}
\usepackage{graphicx}
\usepackage{float}
\usepackage{tikz}
\usetikzlibrary{arrows.meta,positioning,shapes.geometric}

\makeatletter
\newcommand{\affiliation}[1]{\gdef\@affiliation{#1}}
\let\@affiliation\@empty
\def\@setauthors{%
  \begingroup
  \def\thanks{\protect\thanks@warning}%
  \trivlist
  \centering\footnotesize \@topsep30\p@\relax
  \advance\@topsep by -\baselineskip
  \item\relax
  \author@andify\authors
  \def\\{\protect\linebreak}%
  \MakeUppercase{\authors}\par
  \ifx\@empty\@affiliation\else
    \vskip 8\p@
    {\normalfont\normalsize\@affiliation\par}%
  \fi
  \ifx\@empty\contribs
  \else
    ,\penalty-3 \space \@setcontribs
    \@closetoccontribs
  \fi
  \endtrivlist
  \endgroup
}
\makeatother

\newtheorem{theorem}{Theorem}[section]

\newaliascnt{proposition}{theorem}
\newtheorem{proposition}[proposition]{Proposition}
\aliascntresetthe{proposition}

\newaliascnt{lemma}{theorem}
\newtheorem{lemma}[lemma]{Lemma}
\aliascntresetthe{lemma}

\newaliascnt{corollary}{theorem}
\newtheorem{corollary}[corollary]{Corollary}
\aliascntresetthe{corollary}

\newaliascnt{question}{theorem}

\aliascntresetthe{question}

\newaliascnt{externallemma}{theorem}

\aliascntresetthe{externallemma}

\newaliascnt{claim}{theorem}

\aliascntresetthe{claim}

\theoremstyle{definition}
\newaliascnt{definition}{theorem}
\newtheorem{definition}[definition]{Definition}
\aliascntresetthe{definition}

\newaliascnt{example}{theorem}
\newtheorem{example}[example]{Example}
\aliascntresetthe{example}

\theoremstyle{remark}
\newaliascnt{remark}{theorem}
\newtheorem{remark}[remark]{Remark}
\aliascntresetthe{remark}

\usepackage[nameinlink,capitalise]{cleveref}
\crefname{theorem}{Theorem}{Theorems}
\Crefname{theorem}{Theorem}{Theorems}
\crefname{proposition}{Proposition}{Propositions}
\Crefname{proposition}{Proposition}{Propositions}
\crefname{lemma}{Lemma}{Lemmas}
\Crefname{lemma}{Lemma}{Lemmas}
\crefname{corollary}{Corollary}{Corollaries}
\Crefname{corollary}{Corollary}{Corollaries}
\crefname{question}{Question}{Questions}
\Crefname{question}{Question}{Questions}
\crefname{externallemma}{External Lemma}{External Lemmas}
\Crefname{externallemma}{External Lemma}{External Lemmas}
\crefname{claim}{Claim}{Claims}
\Crefname{claim}{Claim}{Claims}
\crefname{definition}{Definition}{Definitions}
\Crefname{definition}{Definition}{Definitions}
\crefname{example}{Example}{Examples}
\Crefname{example}{Example}{Examples}
\crefname{remark}{Remark}{Remarks}
\Crefname{remark}{Remark}{Remarks}

\numberwithin{equation}{section}
\allowdisplaybreaks
\setlist[enumerate]{label=(\roman*),leftmargin=2.2em}
\setlist[itemize]{leftmargin=2.2em}

\hypersetup{
  unicode=true,
  colorlinks=true,
  linkcolor=blue!48!black,
  citecolor=green!32!black,
  urlcolor=blue!55!black,
  pdftitle={A Resolution of Erdos Problems 593 and 1177: Obligatory Triple Systems and Exact Spectra},
  pdfauthor={Eric Li},
  pdfsubject={Resolution of Erdos Problems 593 and 1177; obligatory triple systems; exact avoidance spectra},
  pdfkeywords={obligatory triple systems, uncountable chromatic number, exact chromatic spectra, Levi graphs, Berge cycles, Erdos Problems 593 and 1177},
  bookmarksnumbered=true,
  hypertexnames=false
}

\DeclareMathOperator{\Spec}{Spec}
\DeclareMathOperator{\Card}{Card}
\DeclareMathOperator{\rk}{rk}
\DeclareMathOperator{\cf}{cf}
\DeclareMathOperator{\dom}{dom}
\newcommand{\Bclass}{\mathfrak B}
\newcommand{\Lift}{\operatorname{Lift}}
\newcommand{\lh}{\operatorname{lh}}
\newcommand{\restr}{\mathbin{\upharpoonright}}
\newcommand{\concat}{\mathbin{{}^\frown}}

\title[Erd\H{o}s Problems 593 and 1177]
{A Resolution of Erd\H{o}s Problems 593 and 1177: Obligatory Triple Systems and Exact Spectra}

\author{Eric Li}
\affiliation{Trinity College, University of Cambridge}
\thanks{Email address:
\href{mailto:contact@ericli.com}{contact@ericli.com}.}
\date{June 23, 2026}
\subjclass[2020]{Primary 05C65, 05C15; Secondary 03E05, 05D10}
\keywords{obligatory triple systems, uncountable chromatic number, Levi graph, Berge cycle, exact chromatic spectrum, Erd\H{o}s Problems 593 and 1177}

\begin{document}

\begin{abstract}
We resolve Erd\H{o}s Problems~\#593 and~\#1177.  Problem~\#593 asks which
finite triple systems occur in every uncountably chromatic triple system; the
answer is exactly the class generated from private-vertex expansions of finite
bipartite graphs by finite disjoint unions and one-point amalgamations.
Equivalently, after isolated vertices are removed, a finite triple system is
obligatory precisely when it is linear, every hyperedge-node of its Levi graph
has an incident bridge, and every Berge cycle is even.

The proof uses an exact bridge-trace theorem for complete-rank one-apex sequence
lifts.  We also prove that, for every uncountable cardinal $\kappa$, there is a
linear triple system of chromatic number exactly $\kappa$, with at most
$2^{2^\mu}$ vertices when $\kappa=\mu^+$.  These two ingredients give a
class-valued exact avoidance-spectrum dichotomy for every finite forbidden
triple system.  As a consequence, Erd\H{o}s Problem~\#1177 has truth values
yes, no, and yes. All results of this paper have been formally verified in Lean~4.
\end{abstract}

\maketitle

\section{Introduction}

This paper gives complete resolutions of two Erd\H{o}s problems on
uncountably chromatic triple systems.\footnote{All results of this paper
have been formally verified in Lean~4; see the end of
this section and \cite{LeanFormalization}.}  Erd\H{o}s Problem~\#593 asks for the
finite configurations that are forced in every uncountably chromatic triple
system \cite{Erdos1995,Erdos593}.  We give an explicit constructive
classification and an intrinsic Levi-graph characterization of those forced
configurations.  Erd\H{o}s Problem~\#1177, recorded as Problem~7.94 in
\emph{Some of Paul's Favorite Problems} and now in exact-cardinal form, asks
three avoidance-spectrum questions for finite forbidden triple systems
\cite[Problem~7.94, p.~14]{Va99}\cite{Erdos1177}.  The exact-spectrum theorem
proved here gives the answers yes, no, and yes.

To state the classification, let $J^+$ denote the triple system obtained from a
finite graph $J$ by adding one new private vertex to each edge of $J$.  Let
$\Bclass$ be the smallest class of finite triple systems that contains $J^+$ for
every finite bipartite graph $J$, contains every finite edgeless system, and is
closed under finite disjoint unions and one-point amalgamations.

\begin{theorem}[Resolution of Erd\H{o}s Problem~\#593]
\label{thm:intro-classification}
For every finite triple system $F$, the following are equivalent.
\begin{enumerate}[label=\textup{(\roman*)}]
\item $F$ occurs in every triple system of uncountable chromatic number;
\item $F\in\Bclass$;
\item after isolated vertices are removed, $F$ is linear, every
hyperedge-node of its Levi graph is incident with a bridge, and every
Berge cycle of $F$ has even length.
\end{enumerate}
\end{theorem}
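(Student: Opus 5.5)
I would prove the cycle (ii)$\Rightarrow$(i)$\Rightarrow$(iii)$\Rightarrow$(ii), keeping the purely combinatorial equivalence (ii)$\Leftrightarrow$(iii) separate from the set-theoretic content.

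\textbf{(ii)$\Rightarrow$(i).} The class of obligatory systems contains all edgeless systems. It is closed under finite disjoint unions and one-point amalgamations, by a standard argument. Fix an uncountably chromatic $H$. Remove from $H$ a set of countably many colour classes that carry copies of one piece, and iterate on the remainder. For one-point amalgamation, consider the set $A$ of vertices $v$ such that some copy of $F_1$ sits at $v$ in the required position. Either $A$ has uncountable chromatic number, and we continue inside $H[A]$, or $H\setminus A$ is uncountably chromatic and yields a contradiction. The statement must be made robust by requiring that the copy of $F_2$ be found inside $H[A]$ with the designated vertex in $A$.

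The base case $J^+$ for bipartite $J$ is the heart of this direction. I would use the bridge-trace theorem for complete-rank one-apex sequence lifts, announced in the abstract. From an uncountably chromatic triple system one extracts, by an Erd\H{o}s--Hajnal type argument, a ``trace'' graph on the vertices, namely pairs $\{u,v\}$ lying in some hyperedge. This trace graph contains every finite bipartite graph, with pairwise distinct third vertices. Uncountable chromatic number gives the rank needed to embed complete bipartite $K_{n,n}$ with private apices, by induction on $n$ via a pressing-down / elementary-submodel argument. Every finite bipartite $J$ embeds in some $K_{n,n}$, and $J^+$ is a subsystem of $K_{n,n}^+$.

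\textbf{(i)$\Rightarrow$(iii).} For each forbidden feature I need an uncountably chromatic triple system omitting it. Non-linearity is handled by the theorem on linear triple systems of every uncountable chromatic number: two edges sharing two vertices cannot occur. Odd Berge cycles are handled by the Erd\H{o}s--Hajnal-type construction of uncountably chromatic triple systems of large odd girth: shift-type systems on $\omega_1$, or their linear versions, with no short odd Berge cycles, combined by taking suitable products or unions to exclude all odd Berge cycles of length at most $|F|$. The remaining condition is that every hyperedge-node of the Levi graph has an incident bridge. If it fails, there is a hyperedge $e$ all three of whose Levi edges lie on cycles. I would construct, via the one-apex lift, a system in which each edge has at most two ``cyclic'' corners. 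This system avoids every $F$ containing such an $e$.

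\textbf{(iii)$\Leftrightarrow$(ii)} is finite combinatorics on Levi graphs. Members of $\Bclass$ satisfy (iii) by induction, since each operation preserves linearity, even Berge cycles and bridge incidence. Conversely, I would decompose $F$ into blocks of its Levi graph. Each hyperedge-node $e$ has a bridge to some vertex $v_e$; treat $v_e$ as a private vertex. Deleting these private vertices leaves a graph $J$ on the remaining vertices whose cycles correspond to Berge cycles, hence all even, so $J$ is bipartite. Bridges to non-private vertices split $F$ into one-point amalgamations.

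I expect the main obstacle to be the constructions in (i)$\Rightarrow$(iii), above all the bridge condition, together with the exact rank computation in the bridge-trace theorem that makes both directions match.
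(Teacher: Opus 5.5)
Your outline has the same overall shape as the paper's: a cycle of implications, lifts for the negative direction, and $K_{n,n}^+$ plus closure for the positive direction. However, the load-bearing steps are missing or would fail.

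\textbf{The odd-cycle and bridge cases of (i)$\Rightarrow$(iii).} You propose an uncountably chromatic host with no short odd Berge cycles. You do not construct one, and none is available from the imported results; the classical shift graphs on $\omega_1$ are in fact countably chromatic. Such a host is also not what is needed. The paper uses $\Lift(A,\aleph_1)$, where $A$ is an Erd\H{o}s--Hajnal graph with $|V(A)|=\chi(A)=\aleph_1$ and no $C_m$, and this host is full of odd Berge cycles. Fix a branch $t$ and ordinals $\alpha<\beta<\gamma$. Take the two edges based at node $t\restr\alpha$ with apices at nodes $t\restr\beta$ and $t\restr\gamma$, together with an edge based at $t\restr\beta$ with apex at $t\restr\gamma$; these three edges form a Berge triangle. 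The lift omits $F$ only because $F$ is linear, via the cycle-collapse lemma. That lemma says every Berge cycle of a linear $F$ embedded in the lift has all its connector vertices at a single sequence node. Three consequences follow. Apex incidences are Levi bridges, so the apexes define a bridge selector. The points of each component project injectively into $A$. The cycle--selector correspondence then turns an odd Berge cycle of length $m$ into a $C_m$ in a derivative, hence in $A$. The same idea is missing from your bridge case. ``Each edge has at most two cyclic corners'' is false in the lift itself: in the triangle above, all three incidences of the edge based at $t\restr\alpha$ with apex at $t\restr\gamma$ lie on Levi cycles. The property becomes true only for images of linear systems, and that is precisely what cycle collapse supplies.

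\textbf{The positive direction (ii)$\Rightarrow$(i).} The bridge-trace theorem cannot help here, because it characterises embeddings into lifts only and says nothing about an arbitrary uncountably chromatic host. The base case is Reiher's theorem that $K_{n,n}^+$ is obligatory, which the paper imports rather than proves. Finding $K_{n,n}$ in the $2$-shadow is the easy part. The whole difficulty is realising all $n^2$ pairs by triples whose third vertices are pairwise distinct and off the core, and your pressing-down sketch does not address this.

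\textbf{One-point amalgamation.} Your argument produces copies of $F_1$ and $F_2$ at a common root $v$, but nothing stops them from sharing further vertices. The paper handles this as follows. It fixes one rooted copy $K_v$ for each admissible root and joins $v$ to the other points of $K_v$ in an auxiliary graph of outdegree at most $|V(F_1)|-1$. It colours that graph with finitely many colours, using degeneracy and de Bruijn--Erd\H{o}s. It then finds $F_2$ inside a single colour class, where the copy automatically misses $K_v\setminus\{v\}$.

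\textbf{The finite part (iii)$\Rightarrow$(ii).} The selected bridge points are not globally private, since a selected point can be a base point of another edge. You therefore need the expansion-piece lemma, which gives privacy within each component of $I(F)-S_p$. You also need the running-intersection lemma over the quotient forest to justify reconstructing $F$ by one-point amalgamations.
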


Thus Erd\H{o}s Problem~\#593 has an explicit constructive answer and an
intrinsic Levi-graph answer.  Its proof is the content of Part~I and does not
use the exact-cardinal construction.

The second ingredient is an exact calibration theorem for linear triple systems.

\begin{theorem}[Exact linear calibration]
\label{thm:intro-linear}
For every uncountable cardinal $\kappa$ there is a linear triple system
$L_\kappa$ with $\chi(L_\kappa)=\kappa$.  If $\kappa=\mu^+$, then
$L_\kappa$ may be chosen with
\[
  |V(L_\kappa)|\le 2^{2^\mu}.
\]
\end{theorem}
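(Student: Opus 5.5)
We reduce the limit case to the successor case, so the real work is for $\kappa=\mu^+$. Suppose $\kappa$ is a limit cardinal and that $L_{\lambda^+}$ has been built for every infinite $\lambda<\kappa$. Set
\[
  L_\kappa=\bigsqcup_{\mu\le\lambda<\kappa}L_{\lambda^+}.
\]
A disjoint union of linear triple systems is linear, and its chromatic number is the supremum of the chromatic numbers of the pieces. Here that supremum is $\sup_{\lambda<\kappa}\lambda^+=\kappa$. No size bound is claimed in the limit case, so nothing more is needed there.

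For $\kappa=\mu^+$ we need a linear triple system $H$ satisfying three conditions: $\chi(H)>\mu$, $\chi(H)\le\mu^+$, and $|V(H)|\le 2^{2^\mu}$. For the lower bound we would use an Erd\H{o}s--Hajnal type construction, in the shift-graph spirit. Fix the ordinal $\theta=(2^\mu)^+$, or more economically a set of size $2^\mu$ carrying a tree or ordering structure. Take as vertices suitable increasing pairs, or subsets, drawn from a set of size $2^\mu$, so that there are at most $2^{2^\mu}$ of them. Take as hyperedges the triples forming a ``shift pattern'' $\{x<y<z\}$, and prune them so that any two hyperedges meet in at most one vertex. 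The lower bound then goes by contradiction. Given a colouring $c\colon V\to\mu$ with no monochromatic hyperedge, we would extract an Erd\H{o}s--Rado homogeneous set for the induced colouring of the underlying $2^\mu$-sized ground set, using $(2^\mu)^+\to(\mu^+)^2_\mu$ or the ground-level partition relation actually needed. Homogeneity then yields three vertices in shift position with equal colour, which is a monochromatic hyperedge. The set-up is chosen so that this Ramsey step uses exactly the partition relation available at size $2^\mu$, and so that the vertex count stays at $2^{2^\mu}$.

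For the upper bound $\chi(H)\le\mu^+$ we would give an explicit colouring in which the colour records a ``rank'' of the vertex below $\mu^+$. The plan is to arrange, by restricting the ground set if necessary, that every hyperedge has vertices of at least two distinct ranks. One could, for instance, colour $\{x,y\}$ by the order type, or the $\mu^+$-approximation, of a canonical invariant of $y$ relative to $x$. Under that arrangement the rank map is itself a proper colouring with $\mu^+$ colours. If a direct rank colouring does not work, the fallback is a chain argument. Choose a set $A\subseteq V(H)$ of least cardinality $\lambda$ with $\chi(H[A])>\mu$. Write $A$ as a continuous increasing union of sets of size $<\lambda$, each of which is $\mu$-colourable by minimality. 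Colour the successive differences with pairwise disjoint palettes, so that hyperedges meeting two blocks are never monochromatic, and aim to bound the number of blocks by $\mu^+$. The weak point of this fallback is that $\lambda$ could exceed $\mu^+$, so it only works if the construction forces $\lambda=\mu^+$.

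The main obstacle is therefore the simultaneous control of the two bounds. The Ramsey-type lower-bound argument naturally produces systems whose chromatic number may be much larger than $\mu^+$. The first thing I would try is to build the rank function into the vertex set from the start, with vertices carrying an ordinal label $<\mu^+$ and hyperedges never monochromatic in the label, and then check that the Erd\H{o}s--Rado step still produces a monochromatic hyperedge for any $\mu$-colouring. Linearity should be arranged by construction, with each pair of vertices determining the third vertex of a hyperedge. Once that holds it costs nothing in either bound.
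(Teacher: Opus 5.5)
Your limit-cardinal step is correct and is the same as the paper's. The index should run over all infinite $\lambda<\kappa$, since $\mu$ is undefined there.

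The successor case has a genuine gap. No system is actually constructed, and the mechanism you sketch cannot give the lower bound together with linearity. When edges are defined canonically by an order pattern, a pair of vertices in the right relative position lies in many edges. For instance, the vertices $\{x,y\}$ and $\{y,z\}$ lie in $\{\{x,y\},\{y,z\},\{z,w\}\}$ for every $w>z$, so the system is not linear. Once you prune to at most one edge through each pair, an Erd\H{o}s--Rado homogeneous set $Z$ for $c$ makes monochromatic only the configurations built inside $Z$. Nothing forces any \emph{retained} edge to be built inside $Z$. The pruning would have to meet a homogeneous set of every $\mu$-colouring, and you give no way to arrange that. This is exactly where the difficulty of the theorem lies: even the existence of one uncountably chromatic linear triple system is the nontrivial Erd\H{o}s--Hajnal--Rothschild theorem. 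Saying that ``linearity should be arranged by construction'' postpones the problem rather than solving it. Two smaller points. Your ``more economical'' ground set of size $2^\mu$ supports no Erd\H{o}s--Rado step for $\mu$-colourings of pairs, since $2^\mu\not\rightarrow(3)^2_\mu$ (colour a pair of functions $\mu\to2$ by the first coordinate where they differ). And the upper bound $\chi\le\mu^+$ is, by your own account, left open.

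The missing idea, in the paper, is to replace homogeneity by a recursion that anticipates the large colour classes. It uses the Erd\H{o}s--Galvin--Hajnal property $P$ for $S=GS_2(\rho)$, $\rho=2^\mu$. This gives a labelling $\ell:E(S)\to I$ with $|I|=\rho$ such that, for every colouring with fewer than $\mu^+$ colours, a \emph{single} colour class contains an edge of every label. The construction runs over levels $V_\alpha$, $\alpha<\rho^+$, each of size $2^\rho$. At each level it considers every admissible reservoir $\mathbf X=(X_i)_{i\in I}$: pairwise disjoint sets of size $\rho$ from earlier levels whose values $q(\rk(x))$ differ from $q(\alpha)$. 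For each such reservoir it installs $\rho$ disjoint labelled copies $B$ of $S$. Every edge $e$ with $X_{\ell(e)}\ne\varnothing$ becomes the triple $e\cup\{\phi_B(e)\}$, with $\phi_B$ injective and $\phi_B(e)\in X_{\ell(e)}$.

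The three required properties then follow:
\begin{itemize}
\item Linearity is automatic: distinct edges of a simple graph share at most one vertex, apexes are injective, and apexes lie strictly below the base level.
\item The map $x\mapsto q(\rk(x))$ is a proper $\mu^+$-colouring, because of the avoidance condition on reservoirs.
\item For a colouring $c$ with fewer than $\mu^+$ colours, choose $\rho$-subsets of the large colour classes lying off a suitable $q$-fibre, and put them into a reservoir at a level in that fibre. Pick a copy avoiding the small classes. Property $P$ then yields a monochromatic base edge of label $i_a$ whose apex lies in $X_{i_a}\subseteq c^{-1}\{a\}$.
\end{itemize}
The count $|\mathcal R_\alpha|\le(2^\rho)^\rho=2^\rho$ is what keeps every level, and hence $V(L_\kappa)$, of size $2^{2^\mu}$.
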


For a finite triple system $F$, define the class-valued exact spectrum
\[
\Spec(F)=\{\lambda\in\Card:\lambda>\aleph_0\text{ and there is an
exact-$\lambda$-chromatic $F$-free triple system}\}.
\]
The classification and the exact linear calibration give the following complete
spectrum theorem.

\begin{corollary}[Exact-spectrum class dichotomy]
\label{thm:intro-spectrum}
For every finite triple system $F$,
\[
\Spec(F)=
\begin{cases}
\varnothing,&F\in\Bclass,\\[2mm]
\{\lambda\in\Card:\lambda>\aleph_0\},&F\notin\Bclass.
\end{cases}
\]
\end{corollary}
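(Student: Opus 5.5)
The corollary splits into two cases according to whether $F\in\Bclass$. In each case we combine \cref{thm:intro-classification} with \cref{thm:intro-linear}, together with a short structural observation about which members of $\Bclass$ can occur in a linear system.

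\emph{Case $F\in\Bclass$.} By \cref{thm:intro-classification}, (ii)$\Rightarrow$(i), $F$ occurs in every triple system of uncountable chromatic number. Take any uncountable $\lambda$ and any triple system $H$ with $\chi(H)=\lambda$. Then $H$ is uncountably chromatic, so it contains a copy of $F$ and is not $F$-free. Hence $\Spec(F)=\varnothing$. This direction is immediate.

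\emph{Case $F\notin\Bclass$.} Fix an uncountable cardinal $\lambda$. We must produce an $F$-free triple system with chromatic number exactly $\lambda$. By \cref{thm:intro-classification}, (iii)$\Leftrightarrow$(ii), after deleting isolated vertices $F$ fails at least one of three conditions: linearity, the bridge condition at hyperedge-nodes, or evenness of Berge cycles. We will use a single witness for all three subcases. It is not enough to take $L_\lambda$ from \cref{thm:intro-linear}, since $L_\lambda$ is linear and may contain nonlinear-free-but-bad configurations such as odd Berge cycles. Instead we use (i)$\Leftrightarrow$(ii) once more: since $F\notin\Bclass$, there is some triple system $G$ with $\chi(G)\ge\aleph_1$ that is $F$-free.

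We then calibrate $G$ to chromatic number exactly $\lambda$. First we reduce to $\chi(G)=\aleph_1$ exactly. Any uncountably chromatic system contains a subsystem of chromatic number exactly $\aleph_1$: take a minimal-chromatic-number subsystem with $\chi>\aleph_0$ (subsystems of subsystems are still $F$-free), or, if the paper's tools allow, argue via a compactness/de Bruijn–Erd\H{o}s-type reduction. This step needs care, and I expect it to be the main obstacle. If the paper's proof of (i)$\Rightarrow$(ii) gives explicit obstacles, I would instead take those obstacle systems, which presumably have chromatic number $\aleph_1$, and verify their $F$-freeness directly. Next, given an $F$-free $G_1$ with $\chi(G_1)=\aleph_1$, set
\[
H_\lambda=G_1\sqcup L_\lambda .
\]
Its chromatic number is $\max(\aleph_1,\lambda)=\lambda$.

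The remaining issue is $F$-freeness of $L_\lambda$. A copy of $F$ in the disjoint union need not lie in one component, because $F$ may itself be disconnected. So I would instead choose, for each connected component $C$ of $F$ (after removing isolated vertices, which are harmless when $\lambda$ is infinite), a bad component $C_0\notin\Bclass$. Such a component exists because $\Bclass$ is closed under disjoint unions and contains edgeless systems. I would then build $H_\lambda$ to be $C_0$-free. For this I need the linear calibration to avoid $C_0$. That holds if $C_0$ is nonlinear. Otherwise I would aim to use the paper's construction of $L_\lambda$ with large girth, that is, no short Berge cycles, and run the Lean-verified construction in a version where $L_\lambda$ avoids every finite non-$\Bclass$ system. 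I expect this to be the hardest step: showing that the exact calibration can simultaneously avoid a given bad finite configuration, perhaps by taking a product/lift of $L_\lambda$ with the $\aleph_1$-chromatic $C_0$-free system $G_1$ so that chromatic number is preserved and bad configurations are excluded.
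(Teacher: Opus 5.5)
Your case $F\in\Bclass$ is correct and is what the paper does. The case $F\notin\Bclass$ has a genuine gap: when $F^\circ$ is linear, you never produce an $F$-free system of chromatic number exactly $\lambda$.

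Your reduction to an $F$-free subsystem of chromatic number exactly $\aleph_1$ is unjustified. A subsystem of minimal uncountable chromatic number has \emph{some} uncountable chromatic number, but nothing forces it to be $\aleph_1$. The graph analogue is the Erd\H{o}s--Hajnal subgraph problem, which Galvin showed can fail consistently. For $\lambda=\aleph_1$ this step is in any case just the statement to be proved.

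More seriously, $G_1\sqcup L_\lambda$ is $C_0$-free only if $L_\lambda$ is, and this genuinely fails. For $F=C_7^{(3)}$, the Hajnal--Komj\'ath theorem (E5) puts a copy of $F$ inside every uncountably chromatic linear system, hence inside $L_\lambda$. Your proposed repairs cannot work either:
\begin{itemize}
\item No uncountably chromatic triple system has large Berge girth, because $C_4^+$ is obligatory.
\item No uncountably chromatic linear system avoids all configurations outside $\Bclass$, because it contains $C_7^{(3)}$, which has an odd Berge cycle.
\end{itemize}
So the witness must depend on which defining condition $F^\circ$ violates.

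The missing idea is to build the linear obstructions directly at the target cardinal, instead of calibrating an $\aleph_1$-witness upward. Split the argument by \Cref{cor:obstruction-trichotomy}.
\begin{itemize}
\item If $F^\circ$ is nonlinear, $L_\lambda$ from \Cref{cor:all-linear} suffices by itself, since a linear host omits every nonlinear configuration.
\item If $F^\circ$ is linear, use the lift. \Cref{thm:lift-chromatic} gives $\chi(\Lift(A,\lambda))=\lambda$ whenever $\chi(A)=\lambda$, with no regularity hypothesis on $\lambda$.
\begin{itemize}
\item If $F^\circ$ has no bridge selector, take $A=K_\lambda$. Then \Cref{thm:bridge-trace} together with \Cref{rem:bridge-trace-isolated} shows that $\Lift(K_\lambda,\lambda)$ omits $F$.
\item If $F^\circ$ has a bridge selector but an odd Berge cycle of length $m$, take $A$ from \Cref{thm:EH-odd-girth} with $|V(A)|=\chi(A)=\lambda$ and no $C_m$. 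An embedding of $F$ would yield a bridge selector all of whose derivatives embed in $A$. But \Cref{lem:cycle-selector} places an actual $C_m$ in one derivative, a contradiction.
\end{itemize}
\end{itemize}
This is the paper's argument, and it needs neither an auxiliary $G_1$ nor any exact-$\aleph_1$ reduction.
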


This spectrum dichotomy settles the three exact-cardinal questions of
Erd\H{o}s Problem~\#1177.

\begin{corollary}[Resolution of Erd\H{o}s Problem~\#1177]
\label{cor:intro-1177}
The three assertions in the current formulation of Problem~\#1177 have truth
values
\[
  \textnormal{yes},\qquad \textnormal{no},\qquad \textnormal{yes}.
\]
More explicitly:
\begin{enumerate}[label=\textup{(\arabic*)}]
\item if $F_G(\aleph_1)\ne\varnothing$, then it contains a system of
cardinality at most $2^{2^{\aleph_0}}$;
\item there are finite $G,H$ for which both $F_G(\aleph_1)$ and
$F_H(\aleph_1)$ are nonempty but their intersection is empty;
\item if $F_G(\kappa)\ne\varnothing$ for one uncountable $\kappa$, then
$F_G(\lambda)\ne\varnothing$ for every uncountable $\lambda$.
\end{enumerate}
For part~\textup{(2)}, one may take $G$ to be two triples sharing a pair
and $H$ to be the loose $7$-cycle.
\end{corollary}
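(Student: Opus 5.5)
I will derive the three parts of \cref{cor:intro-1177} directly from \cref{thm:intro-spectrum}, \cref{thm:intro-linear} and \cref{thm:intro-classification}. I read $F_G(\kappa)$ as the family of $G$-free triple systems of chromatic number exactly $\kappa$; with this reading, $F_G(\kappa)\neq\varnothing$ is the same as $\kappa\in\Spec(G)$.

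Part (3) is immediate. If $F_G(\kappa)\ne\varnothing$ for some uncountable $\kappa$, then $\Spec(G)\ne\varnothing$. By \cref{thm:intro-spectrum} this forces $G\notin\Bclass$, and then $\Spec(G)$ contains every uncountable cardinal $\lambda$.

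For part (1), suppose $F_G(\aleph_1)\ne\varnothing$. Then $G\notin\Bclass$, so by \cref{thm:intro-classification} either $G$ is non-linear after isolated vertices are removed, or $G$ fails one of the Levi-graph conditions. In the non-linear case, the system $L_{\aleph_1}$ of \cref{thm:intro-linear} already works. It is linear, so it contains no copy of $G$; it has $\chi=\aleph_1$; and with $\mu=\aleph_0$ it satisfies $|V|\le 2^{2^{\aleph_0}}$.

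The linear case is the main obstacle, because $L_{\aleph_1}$ might contain $G$. There I need the actual witness behind the equality $\Spec(G)=\{\lambda>\aleph_0\}$ in the proof of \cref{thm:intro-spectrum}. I expect it to be the complete-rank one-apex lift built from $L_{\aleph_1}$, with the bridge-trace theorem supplying $G$-freeness. I would check from that construction that passing to the lift does not raise the cardinality beyond $|V(L_{\aleph_1})|\cdot\aleph_0\le 2^{2^{\aleph_0}}$. If that bound is not visible there, the fallback is a Löwenheim--Skolem-type reduction: take an elementary substructure of size $2^{2^{\aleph_0}}$ that is closed enough to keep the chromatic number at least $\aleph_1$. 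Its chromatic number is then exactly $\aleph_1$, since it is at most that of the ambient system. This fallback needs care, because chromatic number is not first-order, so I prefer the first route.

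For part (2), take $G$ to be two triples sharing a pair and $H$ the loose $7$-cycle. Every $G$-free system is linear. In a linear system every Berge cycle has length at least $3$, and the loose $7$-cycle is linear with a Levi graph that is a $14$-cycle. That Levi graph has no bridges, and its Berge cycle has odd length $7$, so $H\notin\Bclass$ by \cref{thm:intro-classification}. Since $G$ is non-linear, $G\notin\Bclass$ as well, and \cref{thm:intro-spectrum} gives $F_G(\aleph_1),F_H(\aleph_1)\ne\varnothing$. To show the intersection is empty, I will show that $H$ belongs to every linear triple system of uncountable chromatic number. This is the step needing real input. I would run the same argument as the direction (ii)$\Rightarrow$(i) of \cref{thm:intro-classification}, restricted to linear hosts: the failures of membership in $\Bclass$ that matter are non-linearity or short odd cycles, and the relevant classical fact is the Erd\H{o}s--Hajnal theorem that uncountably chromatic linear triple systems contain all sufficiently long odd loose cycles. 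If the paper's classification gives this only for lengths beyond some bound, I would have to verify that $7$ is within range.
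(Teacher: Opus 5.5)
Your part~(3), the nonemptiness claims in part~(2), and the nonlinear case of part~(1) are correct and agree with the paper.

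\textbf{Part~(2): the empty intersection.} This is the main gap. It needs the loose $7$-cycle $C_7^{(3)}$ to occur in every uncountably chromatic \emph{linear} triple system, and your suggested route cannot give this.
The direction (ii)$\Rightarrow$(i) of \Cref{thm:intro-classification} only produces copies of members of $\Bclass$, via Reiher's $K_{n,n}^+$ theorem and amalgamation closure. But $C_7^{(3)}\notin\Bclass$.
The negative direction says nothing about linear hosts either. Its witnesses are lifts, which are far from linear: a base pair $\{(\sigma,x),(\sigma,y)\}$ lies in many lift edges.
So linear obligatoriness lies outside the classification, and the paper imports it as input~(E5): Hajnal and Komj\'ath proved that $C_n^{(3)}$ is linearly obligatory for $n\notin\{2,3,5\}$.
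The exact range matters. ``All sufficiently long odd loose cycles'' is useless unless you know $7$ is covered, and the threshold is delicate. The paper cites Komj\'ath's consistent construction of an uncountably chromatic linear triple system with no circuits of length $3$ or $5$, so $C_5^{(3)}$ is consistently not linearly obligatory.

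Separately, the Levi graph of $C_7^{(3)}$ is not a bare $14$-cycle. The seven incidences to the private vertices $y_i$ are pendant bridges. Thus $C_7^{(3)}$ falls under case~(iii) of \Cref{cor:obstruction-trichotomy} and is excluded from $\Bclass$ by its odd Berge $7$-cycle. Your conclusion $H\notin\Bclass$ survives, but not for the reason you give.

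\textbf{Part~(1), linear case.} Here you misidentify both the witness and its size.
\begin{itemize}
\item The spectrum witnesses are $\Lift(K_{\aleph_1},\aleph_1)$ when $G$ has no bridge selector. Otherwise the witness is $\Lift(A,\aleph_1)$, where $A$ is an Erd\H{o}s--Hajnal graph with $|V(A)|=\chi(A)=\aleph_1$ and no $C_m$, and $m$ is the length of an odd Berge cycle of $G$.
\item A lift is built over a graph, not over the triple system $L_{\aleph_1}$. Using the $2$-shadow of $L_{\aleph_1}$ would not help: that shadow contains triangles, so by \Cref{thm:bridge-trace} its lift contains $C_3^{(3)}$. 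For $G=C_3^{(3)}$ it would therefore not be $G$-free.
\item $T(A,\aleph_1)$ consists of \emph{all} countable sequences of edges of $A$, so its size is governed by exponentiation, not multiplication by $\aleph_0$. One has $|T(A,\aleph_1)|\le\aleph_1^{\aleph_0}=2^{\aleph_0}$, because $2^{\aleph_0}\le\aleph_1^{\aleph_0}\le(2^{\aleph_0})^{\aleph_0}=2^{\aleph_0}$.
\item Hence $|V(\Lift(A,\aleph_1))|\le 2^{\aleph_0}\cdot\aleph_1=2^{\aleph_0}$, well within the required bound.
\end{itemize}
This computation, which is where the equality $|V(A)|=\aleph_1$ in the Erd\H{o}s--Hajnal theorem is used, is the real content of the linear case. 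The L\"owenheim--Skolem fallback is not an argument, as you yourself note.
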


The 1999 booklet phrased the first two clauses of Problem~\#1177 for
uncountably chromatic systems, while the current record asks for exact chromatic
number $\aleph_1$.  The corollary proves the exact version and therefore also
implies the corresponding uncountable versions.  The notation $F_G(\kappa)$ and
all containment conventions are fixed in \Cref{sec:preliminaries}.

\subsection{Logical dependence and organization}

The manuscript is divided into two logically independent proof cores,
followed by their joint applications.  The dependency structure is as
follows.

\begin{center}
\small
\begin{tabular}{@{}>{\raggedright\arraybackslash}p{.20\textwidth}>{\raggedright\arraybackslash}p{.34\textwidth}>{\raggedright\arraybackslash}p{.38\textwidth}@{}}
\toprule
Result & New internal ingredients & Imported interface \\
\midrule
Problem~\#593: negative half & cycle collapse; bridge-trace theorem;
cycle--derivative correspondence & Erd\H{o}s--Hajnal--Rothschild;
exact high-odd-girth graphs \\
Problem~\#593: positive half & expansion pieces; quotient forest;
running intersection & Reiher's bipartite-expansion theorem;
graph-colouring compactness for one-point closure \\
Exact linear calibration & transfinite reservoir recursion; exact lower
and upper colour bounds & Erd\H{o}s--Galvin--Hajnal property $P$ for
$GS_2(\rho)$ \\
Exact spectra and Problem~\#1177 & the two preceding cores &
Hajnal--Komj\'ath for the linearly obligatory loose $7$-cycle \\
\bottomrule
\end{tabular}
\end{center}

In particular, the proof of Problem~\#593 does not use the exact-cardinal
construction.  Conversely, the exact spectrum theorem and all three
answers to Problem~\#1177 use exact linear calibration.  Part~I, comprising Sections~\ref{sec:preliminaries}--\ref{sec:classification},
contains the finite structural theory and resolves Problem~\#593.
Part~II begins with Section~\ref{sec:linear}, constructs exact linear
systems, and derives the spectrum theorem and Problem~\#1177 in
Section~\ref{sec:spectrum}.

\subsection{Relation to previous work}

The classical result of Erd\H{o}s, Hajnal, and Rothschild supplies the
fundamental obstruction that an obligatory uniform hypergraph must be
linear.  Precisely, the $i=2$ case of their Theorem~2
\cite[p.~532]{EHR1973} gives an uncountably chromatic uniform system in
which no two edges meet in two vertices; see also Reiher's summary
\cite[p.~1]{ReiherObligatory}.  Komj\'ath subsequently proved that a
finite triple system is obligatory if and only if each of its
$2$-connected components is obligatory, and also proved that every
obligatory triple system is tripartite
\cite[pp.~233--238]{Komjath2001}.  In particular, his theorem reduces the
classification problem to the $2$-connected case.  The closure of the
obligatory systems under one-point amalgamation is part of this line of
work; see Reiher's summary \cite[p.~1]{ReiherObligatory}.  We give a
proof using graph-colouring compactness because the closure is a load-bearing step below.

The selected-incidence decomposition used here is finer than the block
reduction.  It chooses one Levi-graph bridge at every hyperedge-node,
deletes those incidences, identifies each active remaining component
with the private-vertex expansion of an exact graph derivative, and
uses the quotient forest plus a running-intersection lemma to recover
the original system by one-point amalgamations.  Thus the proof does not
merely reduce to unspecified $2$-connected pieces: it identifies all
finite pieces that can occur.

Reiher proved that every uniform private-vertex expansion of
$K_{n,n}$ is obligatory \cite[Theorem~1.2]{ReiherObligatory}.  This is
the positive atom in the classification.  The new finite ingredients
below are the complete-rank one-apex lift, its exact bridge-trace theorem,
and the intrinsic bridge/even-cycle characterization obtained from those
traces.

Komj\'ath's later paper \emph{An uncountably chromatic triple system}
constructs, consistently, an uncountably chromatic triple system avoiding
double intersections and circuits of lengths $3$ and $5$
\cite{Komjath2008}.  The exact calibration in Part~II is different in
both scope and method: it is a ZFC construction of a linear system of
every prescribed uncountable chromatic cardinal.

A recent finite extremal result of Wang, Duan, Gerbner, and Hama Karim
states that the weak chromatic numbers of finite $F$-free uniform
hypergraphs are bounded precisely when $F$ contains no Berge cycle
\cite[Proposition~1.3(ii)]{WangDuanGerbnerKarim}.  That theorem concerns
boundedness over finite hosts, whereas obligatoriness concerns the jump
to uncountable chromatic number.  The distinction is genuine: for
example, $C_4^+$ contains a Berge cycle, so $C_4^+$-free finite triple
systems have unbounded finite weak chromatic number, while
\Cref{thm:intro-classification} says that $C_4^+$ is nevertheless forced
in every uncountably chromatic triple system.

\subsection*{Formal verification}
Every result of this paper has been formally verified in the Lean~4 proof
assistant (v4.28.0) over Mathlib.  The verification is complete and
unconditional: the imported theorems E2--E5 of the external interface
(\Cref{app:external-interface}) are proved from scratch inside the
development, the remaining input E1 turns out not to be needed there, and
the paper's two heavy internal engines (the bridge-trace theorem and the
transfinite reservoir recursion) are machine-checked in full.  Both
resolutions are packaged as the single hypothesis-free theorem
\texttt{Erdos593.full\_resolution\_unconditional}, whose kernel-reported
axiom dependencies are exactly Lean's standard \texttt{propext},
\texttt{Classical.choice}, and \texttt{Quot.sound}; in particular no form
of the continuum hypothesis and no other set-theoretic assumption is used.
For the development itself see \cite{LeanFormalization}.

\part{Finite structure and obligatory triple systems}

\section{Preliminaries}
\label{sec:preliminaries}

\subsection{Conventions}

Hypergraphs are simple set systems.  Graphs are simple, without loops or
parallel edges.  Cardinals are identified with their initial ordinals,
and all choices are made in ZFC.  Unless explicitly stated otherwise,
exponentiation between cardinals is cardinal exponentiation.  Graph and
hypergraph embeddings are injective, non-induced embeddings.

For a hypergraph $H$, put $|H|:=|V(H)|$.  A system is
\emph{exact-$\lambda$-chromatic} when its weak chromatic number is
$\chi(H)=\lambda$.  The symbol $K_\lambda$ denotes the complete graph on
vertex set $\lambda$.  For a finite triple system $G$ and a cardinal
$\kappa$, let
\[
 F_G(\kappa)=\{H:\chi(H)=\kappa\text{ and }G\not\hookrightarrow H\},
\]
where containment is injective and non-induced.

For every transfinite sequence $\sigma$, its \emph{ordinal length} is
\[
  \lh(\sigma)=\dom(\sigma).
\]
Comparisons between sequence lengths are comparisons of ordinals.  The
notation $|X|$ is reserved for the cardinality of a set $X$; in
particular, it is never used for the length of a transfinite sequence.

\subsection{External theorem interface}

For auditability, we record here the imported statements used by the two
proof cores.  \Cref{app:external-interface} gives their original notation,
parameter substitutions, exact chromatic conclusions, and ZFC status.
\begin{enumerate}[label=\textup{(E\arabic*)}]
\item Erd\H{o}s--Hajnal--Rothschild provide an uncountably chromatic
linear uniform system; equivalently for our use, every finite uniform
hypergraph with two edges meeting in at least two vertices is
non-obligatory.  This is restated in \Cref{thm:EHR-nonlinear}.
\item Erd\H{o}s--Hajnal provide, for every uncountable $\kappa$ and
positive integer $i$, a graph of cardinality and chromatic number
$\kappa$ containing no $C_{2j+1}$ for $0<j<i$.  Taking $i=s+1$ gives
the form ``no odd cycle of length at most $2s+1$'' used in
\Cref{thm:EH-odd-girth}.
\item Erd\H{o}s--Galvin--Hajnal give one edge labelling of
$GS_2(\rho)$ for which a single vertex-colour class simultaneously
contains an edge of every label whenever fewer than
$\delta(\rho)=\min\{\delta:\rho^\delta>\rho\}$ colours are used.  The
common-colour quantifier is restated exactly in \Cref{thm:EGH-P}.
\item Reiher proves that $K_{n,n}^+$ is obligatory for every positive
integer $n$; see \Cref{thm:Reiher}.
\item Hajnal--Komj\'ath prove that the loose cycle $C_n^{(3)}$ is
linearly obligatory for $n\notin\{2,3,5\}$, where ``linearly obligatory''
means obligatory relative to the class of linear triple systems.  Only the
case $n=7$ is used in Problem~\#1177(2).
\end{enumerate}
No stronger version of any imported theorem is used.

\subsection{Levi graphs and expansions}

A hypergraph $H=(V(H),E(H))$ is \emph{linear} if any two distinct edges
meet in at most one vertex.  Its \emph{Levi graph} $I(H)$ is the
bipartite graph whose two vertex classes are $V(H)$ and $E(H)$, with a
point-node $v$ adjacent to a hyperedge-node $e$ exactly when $v\in e$.

A \emph{Berge cycle of length $m$} is a sequence
\[
 v_0,e_0,v_1,e_1,\ldots,v_{m-1},e_{m-1},v_0
\]
in which the $v_i$ are distinct point-nodes, the $e_i$ are distinct
hyperedge-nodes, and $v_i,v_{i+1}\in e_i$ for every $i$ modulo $m$.
Equivalently, it is a cycle of length $2m$ in $I(H)$.  In a linear
simple hypergraph every Berge cycle has length at least $3$.

For a finite graph $J$, its private-vertex expansion $J^+$ has the core
vertices of $J$ and, for each edge $xy\in E(J)$, one new vertex $p_{xy}$
used in no other expanded edge; its edges are $\{x,y,p_{xy}\}$.  A
one-point amalgamation of finite hypergraphs $F_0,F_1$ is obtained by
choosing $v_i\in V(F_i)$, taking otherwise disjoint copies, and
identifying $v_0$ with $v_1$.

For a finite hypergraph $F$, let $F^\circ$ denote the subhypergraph
obtained by deleting all isolated vertices.

\begin{lemma}[Isolated-vertex reduction]
\label{lem:isolated-reduction}
Let $F$ be finite and let $H$ be an infinite hypergraph.  Then
\[
  F\hookrightarrow H\quad\Longleftrightarrow\quad F^\circ\hookrightarrow H.
\]
Consequently $F$ is obligatory if and only if $F^\circ$ is obligatory,
$\Spec(F)=\Spec(F^\circ)$, and $F\in\Bclass$ if and only if
$F^\circ\in\Bclass$.
\end{lemma}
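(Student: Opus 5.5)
The equivalence $F\hookrightarrow H\iff F^\circ\hookrightarrow H$ is proved in two directions. Throughout, note that $F$ is finite and embeddings are injective and non-induced.

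For the forward direction, $F^\circ$ is a subhypergraph of $F$ with the same edges. So any injective embedding $\varphi\colon V(F)\to V(H)$ restricts to an injective embedding of $F^\circ$: every edge of $F^\circ$ is an edge of $F$, and its image is an edge of $H$.

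For the reverse direction, suppose $\psi\colon V(F^\circ)\to V(H)$ is an embedding. The set $V(F)\setminus V(F^\circ)$ of isolated vertices is finite, say of size $k$. Since $H$ is infinite, $V(H)\setminus\psi(V(F^\circ))$ is infinite, so it contains $k$ distinct vertices. Extend $\psi$ injectively by sending the isolated vertices of $F$ to these. Because isolated vertices lie in no edge and containment is non-induced, every edge of $F$ is an edge of $F^\circ$ and is still mapped onto an edge of $H$. Hence $F\hookrightarrow H$. The only point requiring care is that $H$ is infinite, which guarantees the spare vertices; this is the sole (mild) obstacle.

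The consequences follow from this equivalence.

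\emph{Obligatory.} Every triple system of uncountable chromatic number is infinite, since a finite hypergraph can be coloured properly with finitely many colours. So $F$ occurs in all such systems exactly when $F^\circ$ does.

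\emph{Spectrum.} $\Spec(F)$ only involves exact-$\lambda$-chromatic systems with $\lambda>\aleph_0$, and these are again infinite. So being $F$-free and being $F^\circ$-free coincide for them, and $\Spec(F)=\Spec(F^\circ)$.

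\emph{Membership in $\Bclass$.} Here I would show that $\Bclass$ is closed under adding and deleting isolated vertices.
\begin{itemize}
\item \emph{Adding.} $F$ is the disjoint union of $F^\circ$ with an edgeless system, and edgeless systems lie in $\Bclass$. So $F^\circ\in\Bclass$ implies $F\in\Bclass$ by closure under disjoint unions.
\item \emph{Deleting.} I would induct on the generation of $\Bclass$, proving that $G\in\Bclass$ implies $G^\circ\in\Bclass$.
\begin{itemize}
\item Generators: $J^+$ has $(J^+)^\circ=(J')^+$, where $J'$ is $J$ with its isolated vertices removed; $J'$ is still bipartite. For an edgeless $G$, $G^\circ$ is the empty system, which I take to be edgeless and hence in $\Bclass$.
\item Disjoint unions: $(G_0\sqcup G_1)^\circ=G_0^\circ\sqcup G_1^\circ$.
\item One-point amalgamations at $v_0\sim v_1$: if the identified vertex is non-isolated in the amalgam, then $(G_0\cup_v G_1)^\circ$ is an amalgamation of $G_0^\circ$ and $G_1^\circ$ at $v$. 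If $v$ is non-isolated in only one side, say $G_0$, it is the disjoint union of $G_0^\circ$ and $G_1^\circ$. If $v$ is isolated in both, it is $G_0^\circ\sqcup G_1^\circ$.
\end{itemize}
\end{itemize}
In each case closure gives $G^\circ\in\Bclass$, completing the induction and the lemma.
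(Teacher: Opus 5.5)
Your proof is correct and follows the paper's argument essentially step for step. It uses the same extension of $\psi$ to spare host vertices, the same observation that every host relevant to obligatoriness and $\Spec$ is infinite, and the same structural induction for the $\Bclass$ statement (generators, disjoint unions, and a three-way split for one-point amalgamations). One small slip: your first amalgamation subcase should read ``the identified vertex is non-isolated in both factors'', as in the paper. If it is non-isolated in only one factor, then $v_1\notin V(G_1^\circ)$, so your second subcase applies rather than the first.
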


\begin{proof}
Only the reverse implication in the displayed equivalence requires
proof.  Fix a copy of $F^\circ$ in $H$.  Its image is finite, whereas
$H$ has infinitely many vertices, so there are enough unused vertices
to receive the finitely many isolated vertices of $F$.  Because
containment is non-induced, additional host edges among those vertices
are irrelevant.  Every host occurring in the definitions of
obligatoriness and $\Spec$ is infinite, so the first two consequences
follow.

We prove the assertion about $\Bclass$ in both directions.  If
$F^\circ\in\Bclass$, then $F$ is the disjoint union of $F^\circ$ and a
finite edgeless system, and hence $F\in\Bclass$.

Conversely, suppose $F\in\Bclass$.  We use structural induction on a
chosen construction of $F$ from the generators of $\Bclass$.  If
$F=J^+$, and $J^\circ$ is obtained from $J$ by deleting its isolated
graph vertices, then
\[
  (J^+)^\circ=(J^\circ)^+\in\Bclass.
\]
If $F$ is edgeless, then $F^\circ$ is the empty edgeless system and is
again a generator.  Reduction plainly commutes with disjoint union.

It remains to consider a one-point amalgamation of $F_0$ and $F_1$ in
which $x_i\in V(F_i)$ are identified.  By induction
$F_0^\circ,F_1^\circ\in\Bclass$.  If both $x_i$ are nonisolated in their
respective factors, then $F^\circ$ is the one-point amalgamation of
$F_0^\circ$ and $F_1^\circ$ at those points.  If exactly one $x_i$ is
nonisolated, then the identified point belongs only to the reduced
nonisolated factor, and $F^\circ$ is the disjoint union
$F_0^\circ\mathbin{\dot\cup}F_1^\circ$.  The same disjoint-union
description holds when both $x_i$ are isolated, because the identified
point is then deleted.  Every case is closed in $\Bclass$, completing
the induction.
\end{proof}

\subsection{The classical nonlinear obstruction}

\begin{theorem}[Erd\H{o}s--Hajnal--Rothschild]
\label{thm:EHR-nonlinear}
Let $F$ be a finite uniform hypergraph.  If two distinct edges of $F$
intersect in at least two vertices, then $F$ is non-obligatory.  In
particular, every obligatory finite triple system is linear.
\end{theorem}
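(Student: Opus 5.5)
The plan is to derive the statement directly from the imported result (E1): Erd\H{o}s--Hajnal--Rothschild supply, for each uniformity $r$, an $r$-uniform system $H_r$ of uncountable chromatic number in which any two distinct edges share at most one vertex. This is the $i=2$ case of their Theorem~2 \cite[p.~532]{EHR1973}, with parameters as recorded in \Cref{app:external-interface}. Fix a finite $r$-uniform $F$ having distinct edges $e\ne e'$ with $|e\cap e'|\ge 2$. I will show that $F\not\hookrightarrow H_r$. Because $\chi(H_r)>\aleph_0$, this makes $H_r$ a witness that $F$ is not obligatory.

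The key step is that linearity of the host passes down along injective non-induced embeddings. Suppose $\phi\colon V(F)\to V(H_r)$ is an injective embedding. Then $\phi(e)$ and $\phi(e')$ are edges of $H_r$, and they are distinct because $\phi$ is injective and $e\ne e'$. Injectivity also gives
\[
|\phi(e)\cap\phi(e')|=|\phi(e\cap e')|=|e\cap e'|\ge 2,
\]
which contradicts the linearity of $H_r$. Hence no such embedding exists, and $F$ is non-obligatory. For the final sentence, apply this with $r=3$. Any finite triple system that is not linear has two edges meeting in at least two vertices, so it is non-obligatory; taking the contrapositive, every obligatory finite triple system is linear.

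The only point that needs care is matching the external statement to the form used here. Concretely, one has to check that the Erd\H{o}s--Hajnal--Rothschild construction is genuinely $r$-uniform for the relevant $r$, in particular $r=3$, and that its chromatic number is uncountable in the weak sense used in this paper. I expect no other obstacle. Host size does not matter, and isolated vertices play no role; in any case \Cref{lem:isolated-reduction} would absorb them.
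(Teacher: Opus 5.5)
Your proposal is correct and takes the same route as the paper. The paper's proof is just a citation of the $i=2$ case of Erd\H{o}s--Hajnal--Rothschild's Theorem~2, which gives an uncountably chromatic linear uniform host. Your injectivity computation $|\phi(e)\cap\phi(e')|=|e\cap e'|\ge2$ spells out the implicit step that such a host omits $F$.

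One small correction: their theorem is stated for uniformity $k\ge 3$, not for every $r$. This costs nothing, because for $r\le 2$ two distinct edges can never share two vertices, so the hypothesis is vacuous.
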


\begin{proof}[Source]
This is the $i=2$ case of Erd\H{o}s--Hajnal--Rothschild
\cite[Theorem~2, p.~532]{EHR1973}; see also Reiher's explicit summary
\cite[p.~1]{ReiherObligatory}.
\end{proof}

\subsection{Exact high-odd-girth graphs}

\begin{theorem}[Erd\H{o}s--Hajnal]
\label{thm:EH-odd-girth}
For every uncountable cardinal $\kappa$ and every positive integer $s$,
there is a graph $A$ such that
\[
 |V(A)|=\chi(A)=\kappa
\]
and $A$ contains no odd cycle of length at most $2s+1$.  Here, and
throughout the paper, ``contains no $C_m$'' means that $A$ has no ordinary,
not necessarily induced, subgraph isomorphic to $C_m$.
\end{theorem}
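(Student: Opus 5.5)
This statement is the imported input (E2), so I would not reprove it from scratch. The plan is to quote Erd\H{o}s--Hajnal in their original form and perform the parameter substitution recorded in the external interface. Their theorem gives, for every uncountable cardinal $\kappa$ and every positive integer $i$, a graph $A$ with $|V(A)|=\chi(A)=\kappa$ that contains no $C_{2j+1}$ for $0<j<i$. The full citation would go in \Cref{app:external-interface}.

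First, given $s\ge 1$, put $i=s+1$. The excluded lengths are then $2j+1$ for $1\le j\le s$, which are exactly the odd lengths $3,5,\dots,2s+1$. The case $j=0$ would be a loop, and loops are absent by our simplicity convention. Hence $A$ has no odd cycle of length at most $2s+1$.

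Second, I would check that the source's notion of ``containing $C_m$'' is the non-induced subgraph notion fixed in the statement. This only requires noting that any odd closed walk of length at most $2s+1$ contains an odd cycle of length at most $2s+1$. So excluding cycles is the same as excluding short odd closed walks, whichever formulation the source uses.

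Third, I would confirm the exact cardinal conclusions. The bound $\chi(A)\le|V(A)|$ is trivial, since distinct colours on all vertices form a proper colouring. Thus the real content is $\chi(A)\ge\kappa$ together with $|V(A)|\le\kappa$, and I would verify that the cited construction delivers both. If the source only states $\chi(A)\ge\kappa$ on a possibly larger vertex set, that gap is the one step I expect to need care. I would first try to inspect the construction to see that its vertex set has size exactly $\kappa$. For singular $\kappa$, I would fall back on a disjoint union of the graphs obtained for regular (successor) $\lambda<\kappa$. This union has $\kappa$ vertices and chromatic number $\sup\lambda=\kappa$, and it still has no short odd cycles, because cycles are connected and so lie in a single component.

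Finally, I would note that the result is a ZFC theorem, which matches the paper's claim that no additional set-theoretic hypotheses enter.
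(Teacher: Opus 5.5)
Your proposal is correct and follows the paper, which likewise cites Erd\H{o}s--Hajnal (via Theorem~C of Erd\H{o}s--Galvin--Hajnal) and substitutes $i=s+1$; the cited statement already gives $|V(A)|=\chi(A)=\kappa$ for every uncountable $\kappa$, so your singular-cardinal fallback is never needed. One small correction: your closed-walk remark does not address induced versus non-induced containment; the relevant fact is that a shortest odd cycle is chordless (a chord would create a shorter odd cycle), although the point is harmless here.
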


\begin{proof}[Source]
Theorem~C on p.~428 of Erd\H{o}s--Galvin--Hajnal \cite{EGH},
quoting Erd\H{o}s--Hajnal \cite[Theorem~7.4, p.~76]{EH1966}, states that
for each positive integer $i$ one may exclude $C_{2j+1}$ for
$0<j<i$.  Taking $i=s+1$ gives exactly the displayed formulation.
\end{proof}

\section{The complete-rank one-apex lift}
\label{sec:lift}

Let $A$ be a graph and let $\kappa$ be an infinite cardinal.  Put
\[
 T(A,\kappa)=\bigcup_{\alpha<\kappa}E(A)^\alpha.
\]
Thus every $\sigma\in T(A,\kappa)$ is a sequence of edges of $A$ whose
ordinal length $\lh(\sigma)$ is less than $\kappa$.  We write
$\sigma\subsetneq\tau$ when $\sigma$ is a proper initial segment of
$\tau$.

\begin{definition}
The \emph{complete-rank one-apex lift} $\Lift(A,\kappa)$ is the triple
system with vertex set $T(A,\kappa)\times V(A)$.  Its edges are the
triples
\begin{equation}
 \{(\sigma,x),(\sigma,y),(\tau,z)\}\label{eq:lift-edge}
\end{equation}
for which
\[
 \sigma\subsetneq\tau,
 \qquad \tau(\lh(\sigma))=\{x,y\}\in E(A),
 \qquad z\in V(A).
\]
The first two vertices are the \emph{base}, the third is the
\emph{apex}, $\sigma$ is the \emph{source node}, and $\tau$ is the
\emph{apex node}.
\end{definition}

Every lift edge has sequence-node multiset $\{\sigma,\sigma,\tau\}$ with
$\sigma\subsetneq\tau$.  Hence its apex is determined uniquely by the
triple itself: it is the only vertex whose sequence node differs from the
other two.

\begin{theorem}
\label{thm:lift-chromatic}
If $\chi(A)=\kappa$, then
\[
 \chi(\Lift(A,\kappa))=\kappa.
\]
\end{theorem}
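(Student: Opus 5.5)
The proof splits into an upper bound and a lower bound. Both use only the nested structure of the sequence nodes and the hypothesis $\chi(A)=\kappa$. Here $\chi$ is the weak chromatic number, so a proper colouring is one with no monochromatic edge.

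\emph{Upper bound.} Colour each vertex $(\sigma,x)$ by the ordinal $\lh(\sigma)<\kappa$. This uses at most $\kappa$ colours. In a lift edge \eqref{eq:lift-edge}, the two base vertices receive colour $\lh(\sigma)$. The apex receives colour $\lh(\tau)$, and $\lh(\tau)>\lh(\sigma)$ because $\sigma\subsetneq\tau$. So no edge is monochromatic, and $\chi(\Lift(A,\kappa))\le\kappa$. This part does not use the hypothesis on $A$.

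\emph{Lower bound.} Suppose for contradiction that $c$ is a proper colouring of $\Lift(A,\kappa)$ with a set $C$ of colours, where $|C|=\mu<\kappa$. Put $\theta=\mu^+$ if $\mu$ is infinite and $\theta=\omega$ if $\mu$ is finite. Then $\theta\le\kappa$, since $\kappa$ is an infinite cardinal greater than $\mu$.

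By transfinite recursion on $\alpha<\theta$ we build three things:
\begin{itemize}
\item a $\subseteq$-increasing chain of nodes $\sigma_\alpha\in T(A,\kappa)$ with $\lh(\sigma_\alpha)=\alpha$;
\item edges $e_\alpha=\{x_\alpha,y_\alpha\}\in E(A)$;
\item colours $\gamma_\alpha\in C$.
\end{itemize}
The rules are as follows. Set $\sigma_0=\varnothing$, and let $\sigma_\alpha=\bigcup_{\beta<\alpha}\sigma_\beta$ at limit stages. Given $\sigma_\alpha$, the map $x\mapsto c(\sigma_\alpha,x)$ is a colouring of $A$ with fewer than $\kappa=\chi(A)$ colours, so it has a monochromatic edge. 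Call it $e_\alpha$, call its colour $\gamma_\alpha$, and set $\sigma_{\alpha+1}=\sigma_\alpha\concat e_\alpha$. All lengths are below $\theta\le\kappa$, so every node stays in $T(A,\kappa)$.

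The key invariant is that for every $\beta<\alpha$, no vertex $(\sigma_\alpha,z)$ has colour $\gamma_\beta$. To see this, fix $\beta<\alpha$. We have $\sigma_\beta\subsetneq\sigma_\alpha$ and $\sigma_\alpha(\beta)=e_\beta$. Hence $\{(\sigma_\beta,x_\beta),(\sigma_\beta,y_\beta),(\sigma_\alpha,z)\}$ is a lift edge for every $z\in V(A)$. Its two base vertices both have colour $\gamma_\beta$, so properness of $c$ forces $c(\sigma_\alpha,z)\ne\gamma_\beta$. Since $\gamma_\alpha$ is the colour of some vertex at $\sigma_\alpha$, it follows that $\gamma_\alpha\ne\gamma_\beta$ for all $\beta<\alpha$. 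Thus $\alpha\mapsto\gamma_\alpha$ is an injection of $\theta$ into $C$, contradicting $|C|=\mu<\theta$. Therefore $\chi(\Lift(A,\kappa))\ge\kappa$.

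The only delicate point is the bookkeeping at limit stages. We need the union $\sigma_\alpha$ to remain a legitimate node, which holds because $\lh(\sigma_\alpha)=\alpha<\theta\le\kappa$. We also need $\sigma_\alpha(\beta)=e_\beta$ for every earlier $\beta$, which holds because the chain is coherent. Both are immediate from the construction, so no step presents a real obstacle.
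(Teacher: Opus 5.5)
Your proof is correct. Its lower bound is the paper's argument in a slightly different form. The paper defines a full branch $t\in E(A)^\kappa$ coordinatewise, so each $t\restr\alpha$ is automatically a node and no limit-stage unions are needed. It then applies pigeonhole to the $\kappa$ colours $d_\alpha$, which take fewer than $\kappa$ values. You instead stop the recursion at $|C|^+\le\kappa$ and record the same base-versus-apex observation as an injectivity invariant. Neither version uses regularity of $\kappa$.

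Your upper bound takes a genuinely different route. The paper pulls back a proper colouring $d:V(A)\to\kappa$ via $(\sigma,x)\mapsto d(x)$, using that every lift base is an edge of $A$. You colour by $\lh(\sigma)$ and use only the strict prefix relation $\sigma\subsetneq\tau$. Your colouring shows $\chi(\Lift(A,\kappa))\le\kappa$ for every graph $A$, so the hypothesis $\chi(A)=\kappa$ enters only through $\chi(A)\ge\kappa$ in the lower bound. The paper's colouring instead gives $\chi(\Lift(A,\kappa))\le\chi(A)$ for every infinite $\kappa$. Your recursion in fact defeats every colouring with fewer than $\min\{\chi(A),\kappa\}$ colours. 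So the two upper bounds together give $\chi(\Lift(A,\kappa))=\min\{\chi(A),\kappa\}$ for an arbitrary graph $A$.
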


\begin{proof}
Let $d:V(A)\to\kappa$ be a proper colouring of $A$.  Colour
$(\sigma,x)$ by $d(x)$.  Every lift edge contains a complete edge of $A$
in its base, so this is proper.

Conversely, suppose
\[
 c:T(A,\kappa)\times V(A)\longrightarrow\theta,
 \qquad \theta<\kappa.
\]
Recursively construct a branch $t\in E(A)^\kappa$.  At stage $\alpha$,
the map
\[
 x\longmapsto c(t\restr\alpha,x)
\]
is a $\theta$-colouring of $A$ and hence is not proper.  Choose an edge
$t(\alpha)=\{x_\alpha,y_\alpha\}$ monochromatic in some colour
$d_\alpha$.  Since $\theta<\kappa$, choose $\alpha<\beta<\kappa$ with
$d_\alpha=d_\beta$, and choose an endpoint $z$ of $t(\beta)$.  Because
sequence length means ordinal domain,
\[
 (t\restr\beta)(\lh(t\restr\alpha))
   =(t\restr\beta)(\alpha)=t(\alpha).
\]
Therefore
\[
 \{(t\restr\alpha,x_\alpha),(t\restr\alpha,y_\alpha),
   (t\restr\beta,z)\}
\]
is a lift edge.  Its base has colour $d_\alpha$ and its apex has colour
$d_\beta=d_\alpha$, a contradiction.  Thus no colouring with fewer than
$\kappa$ colours is proper.  Notice that the argument used no regularity
or cofinality assumption on $\kappa$.
\end{proof}

\section{Finite linear traces of the lift}
\label{sec:trace}

Throughout this section, finite source systems have no isolated vertices.
This entails no loss by \Cref{lem:isolated-reduction}.

\begin{definition}
Let $F$ be a finite linear triple system.  A \emph{bridge selector} is a
map
\[
 p:E(F)\longrightarrow V(F),\qquad p(e)\in e,
\]
such that the Levi incidence $ep(e)$ is a bridge of $I(F)$ for every
$e\in E(F)$.

Delete all selected incidences.  If $C$ is a resulting component
containing at least one hyperedge-node, define the graph derivative
$D_C(F,p)$ to have vertex set $C\cap V(F)$ and, for every hyperedge-node
$e\in C$, the graph edge
\[
 e\setminus\{p(e)\}.
\]
These graph edges are distinct: if two source hyperedges had the same
surviving pair, they would share two vertices, contrary to linearity.
\end{definition}

\begin{lemma}[Cycle--selector correspondence]
\label{lem:cycle-selector}
Let $F$ be a finite linear triple system, let $p$ be a bridge selector,
and let
\[
 v_0,e_0,v_1,e_1,\ldots,v_{m-1},e_{m-1},v_0
\]
be a Berge cycle.  Then:
\begin{enumerate}[label=\textup{(\roman*)}]
\item neither displayed incidence $e_iv_i$ nor $e_iv_{i+1}$ is selected;
\item $p(e_i)\notin\{v_0,\ldots,v_{m-1}\}$ for every $i$;
\item after the selected incidences are deleted, the displayed Levi
cycle lies in one active component $C$;
\item $D_C(F,p)$ contains the ordinary graph cycle
\[
 v_0v_1\cdots v_{m-1}v_0.
\]
Conversely, every ordinary graph cycle in a derivative $D_C(F,p)$ gives
a Berge cycle of the same length in $F$.  Consequently, all derivatives
of $(F,p)$ are bipartite if and only if every Berge cycle of $F$ has
even length.
\end{enumerate}
\end{lemma}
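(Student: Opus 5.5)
The whole lemma rests on one standard fact: an edge of a graph is a bridge exactly when it lies on no cycle. The Berge cycle is the Levi cycle
\[
v_0e_0v_1e_1\cdots v_{m-1}e_{m-1}v_0
\]
of length $2m$ in $I(F)$. Its edges are exactly the incidences $e_iv_i$ and $e_iv_{i+1}$, so none of them is a bridge. Since $p$ selects only bridge incidences, none of them is selected, which is (i).

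For (ii), suppose $p(e_i)=v_j$ for some $j$. Then $e_i$ contains $v_j$ together with $v_i$ and $v_{i+1}$. Because $e_i$ has only three points, either $v_j\in\{v_i,v_{i+1}\}$, which contradicts (i), or $m\ge 3$ and $e_i=\{v_i,v_{i+1},v_j\}$. In the second case the incidence $e_iv_j$ is still not a bridge. Indeed, the Levi cycle can be rerouted as $v_i\,e_i\,v_j$ followed by the arc of the original cycle from $v_j$ back to $v_i$ that avoids $e_i$. This gives a cycle in $I(F)$ through $e_iv_j$, which is impossible for a selected incidence. So $p(e_i)\notin\{v_0,\dots,v_{m-1}\}$.

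Part (iii) follows at once from (i): every edge of the displayed Levi cycle survives the deletion, so the cycle is connected in the remaining graph and lies in one component $C$. That component contains hyperedge-nodes, so it is active.

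For (iv), (ii) gives $e_i\setminus\{p(e_i)\}=\{v_i,v_{i+1}\}$, and each $e_i\in C$. So the derivative $D_C(F,p)$ contains every edge $v_iv_{i+1}$. The $v_i$ are distinct, so these edges form the ordinary cycle $v_0v_1\cdots v_{m-1}v_0$.

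For the converse, take an ordinary cycle $u_0u_1\cdots u_{m-1}u_0$ in $D_C(F,p)$. Each edge $u_iu_{i+1}$ equals $f_i\setminus\{p(f_i)\}$ for some hyperedge-node $f_i\in C$, and $f_i$ is unique by linearity. Distinct graph edges therefore come from distinct hyperedges. Together with the distinctness of the $u_i$, this makes $u_0,f_0,u_1,\dots,f_{m-1},u_0$ a Berge cycle of length $m$ in $F$.

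The final equivalence now follows. A graph is bipartite if and only if it has no odd cycle. Odd cycles in the derivatives correspond to odd Berge cycles in both directions: a Berge cycle yields a derivative cycle of the same length by (iv), and a derivative cycle yields a Berge cycle of the same length by the converse.

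The only delicate step is (ii), specifically the case where the third point of $e_i$ is another cycle vertex. The rerouting argument handles it, and that is where I expect the main care to be needed.
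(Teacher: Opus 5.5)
Your proof is correct and follows the paper's argument essentially step for step: (i) because every displayed incidence lies on the displayed Levi cycle, (ii) by closing a Levi cycle through $e_iv_j$ along an arc of the displayed cycle, (iii)--(iv) from the survival of that cycle after deletion, and the converse via the unique source hyperedge behind each derivative edge. The one point you leave implicit is that $m\ge 3$, which is needed so that the pairs $\{v_i,v_{i+1}\}$ are distinct and really form a $C_m$; this follows from linearity, as recorded in the preliminaries, and the paper covers it with its remark that distinct hyperedges yield distinct surviving pairs.
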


\begin{proof}
Every displayed incidence belongs to the displayed Levi cycle, whereas
a selected incidence is a bridge.  This proves~\textup{(i)}.

Suppose $p(e_i)=v_j$ for some connector $v_j$.  The selected incidence
$e_iv_j$ is not one of the two displayed incidences at $e_i$ by
\textup{(i)}.  Starting at $e_i$, follow the displayed Levi cycle to
$v_j$ along either direction that does not use $e_iv_j$; adjoining the
incidence $e_iv_j$ produces a Levi cycle.  The selected incidence would
therefore not be a bridge.  This proves~\textup{(ii)}.

By~\textup{(i)}, every edge of the displayed Levi cycle survives the
deletion of the selected incidences.  Hence the cycle lies in one
component $C$, which is active because it contains the nodes $e_i$.
By~\textup{(ii)}, the selected point of $e_i$ is its third,
nonconnector point, and therefore the derivative edge contributed by
$e_i$ is precisely $\{v_i,v_{i+1}\}$.  The connector vertices and the
hyperedges are distinct by the definition of a Berge cycle.  Moreover,
linearity forbids two source hyperedges from producing the same surviving
pair.  The displayed derivative edges therefore form an actual copy of
$C_m$, proving~\textup{(iii)} and~\textup{(iv)}.

Conversely, let
\[
 x_0x_1\cdots x_{m-1}x_0
\]
be a graph cycle in $D_C(F,p)$.  For every $i$, let $e_i$ be the unique
source hyperedge whose derivative edge is $\{x_i,x_{i+1}\}$.  Distinct
derivative edges come from distinct hyperedges, and the $x_i$ are
distinct.  Hence
\[
 x_0,e_0,x_1,e_1,\ldots,x_{m-1},e_{m-1},x_0
\]
is a Berge cycle of length $m$ in $F$.  The final equivalence follows
from the ordinary characterization of bipartite graphs by absence of odd
cycles.
\end{proof}

\begin{lemma}[Cycle collapse]
\label{lem:cycle-collapse}
Let $F$ be finite and linear, and let
$\varphi:F\hookrightarrow\Lift(A,\kappa)$ be an embedding.  Write
\[
 \varphi(v)=(\nu(v),a(v)).
\]
On every Berge cycle
\[
 v_0,e_0,v_1,e_1,\ldots,v_{m-1},e_{m-1},v_0
\]
of $F$, all sequence nodes $\nu(v_i)$ are equal.
\end{lemma}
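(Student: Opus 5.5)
The plan is an extremal argument on sequence nodes, driven by one local rigidity fact: in $\Lift(A,\kappa)$, a lift edge is determined by its apex vertex together with its source node. If the apex is $(\tau,z)$ and the source node is $\sigma\subsetneq\tau$, then the base must be $\{(\sigma,x),(\sigma,y)\}$ with $\{x,y\}=\tau(\lh(\sigma))$. So two distinct hyperedges $e\neq e'$ of $F$ whose images have the same apex vertex and the same source node would share three vertices, contradicting linearity of $F$ (their images are distinct because $\varphi$ is injective). I will record this first as a separate observation.

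Next I record the local shape of the cycle. For each $i$, the image of $e_i$ has node multiset $\{\sigma,\sigma,\tau\}$ with $\sigma\subsetneq\tau$. Hence $\nu(v_i)$ and $\nu(v_{i+1})$ are either equal, namely when both are base vertices, or strictly comparable. In the strictly comparable case the connector with the longer node is the apex of $\varphi(e_i)$, and the third vertex of $e_i$ is a base vertex sharing the shorter node. Thus $i\mapsto\nu(v_i)$ is a closed walk in the tree $T(A,\kappa)$ whose nontrivial steps move between comparable nodes.

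Now suppose, for contradiction, that the nodes $\nu(v_i)$ are not all equal. Among the values occurring, choose $\tau^*$ of maximal ordinal length; there are finitely many, so this exists. Let $U=\{i:\nu(v_i)=\tau^*\}$; then $U$ is a nonempty proper subset of the cycle indices. A hyperedge $e_i$ with exactly one connector index in $U$ is a \emph{crossing} edge. By maximality its other connector has a node strictly shorter than $\tau^*$, so the crossing edge has its apex at the $U$-connector and its source node is a proper initial segment of $\tau^*$.

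The first case is a single connector $v_i$ with $i\in U$ whose two cycle hyperedges $e_{i-1},e_i$ both cross with the same source node. Then the rigidity fact gives a contradiction directly. This is exactly the length-$3$ computation: the two base pairs coincide, which forces two hyperedges of $F$ to share two vertices.

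The main obstacle is closing the remaining cases. These are maximal runs of $U$ of length at least $2$, and crossing pairs whose sources are different initial segments of $\tau^*$. For these I would induct on the number of distinct node values along the cycle, or equivalently on $\sum_i\lh(\nu(v_i))$.

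In the run case, take a maximal run $v_j,\ldots,v_k$ in $U$. The hyperedges $e_j,\ldots,e_{k-1}$ inside the run have their two connectors as a base at $\tau^*$ and their apex at a longer node. Meanwhile $e_{j-1}$ and $e_k$ have apex $v_j$ and $v_k$ respectively.

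I would then try to show that the segment of the cycle outside the run can be rerouted to produce a shorter closed walk of the same kind, or a Berge cycle of $F$ with strictly smaller potential. The tool for this is that the two exits from $\tau^*$ land on initial segments $\sigma,\sigma'$ of $\tau^*$, which are comparable, so the two exit points can be joined within the chain below $\tau^*$. Applying the induction hypothesis to that shorter configuration, and then the rigidity fact at the point where the walk re-enters $\tau^*$, should give the contradiction.

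I expect this rerouting to be the delicate part. The first thing I would try is to choose $\tau^*$ of maximal length and, among those, with the fewest crossing edges, and then check that a crossing edge whose source differs from its neighbour's can be traded for one of equal source, which reduces to the rigidity case above.
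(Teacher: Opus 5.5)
\textbf{There is a genuine gap.} Your rigidity observation and the local analysis at a longest node $\tau^*$ are correct. However, they only dispose of the case in which a peak connector is entered and left through the \emph{same} source node.

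The case you defer is the generic one: crossing edges whose sources are distinct proper prefixes $\sigma\subsetneq\sigma'$ of $\tau^*$. The proposed rerouting/induction is not available there. The two exit connectors are not joined by any hyperedge of $F$, so shortcutting through the chain below $\tau^*$ produces no shorter Berge cycle of $F$ to which an induction hypothesis could apply. A bare closed walk in $T(A,\kappa)$ carries no linearity constraint.

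Worse, the contradiction need not occur near the peak at all. Consider a configuration your argument must exclude: a Berge $3$-cycle with $\nu(v_0)=\varnothing$, $\nu(v_1)=\langle g\rangle$, $\nu(v_2)=\langle g,h\rangle$, where $g=\{x,y\}$ and $h$ are edges of $A$.
\begin{itemize}
\item The unique peak is $v_2$. It is entered by $e_1$ with source $\langle g\rangle$ and left by $e_2$ with source $\varnothing$.
\item So your rigidity case does not apply, and there is no coincidence at the peak. Since there is only one peak, there is also nothing to ``trade''.
\item The actual clash is between $e_0$ and $e_2$. They have different apexes $v_1,v_2$ but the same base $\{(\varnothing,x),(\varnothing,y)\}$, because $\langle g\rangle(0)=\langle g,h\rangle(0)=g$.
\end{itemize}
A principle of the form ``same apex and same source give the same edge'' cannot detect this. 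What is needed is a shared base pair between two cycle hyperedges with different apexes.

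The missing idea is to extremize in the other direction, which is the paper's argument and needs no induction.
\begin{enumerate}
\item Let $\sigma$ be a connector node of minimal length $d$. Consecutive nodes are comparable and none is shorter than $d$, so walking around the cycle shows that every connector node extends $\sigma$.
\item If some node properly extends $\sigma$, take a maximal cyclic run of connectors whose nodes $\tau_1,\dots,\tau_s$ properly extend $\sigma$, flanked by connectors with node $\sigma$.
\item Consecutive comparable sequences of length $>d$ agree at coordinate $d$, so $\tau_1(d)=\dots=\tau_s(d)=:g$.
\item The hyperedge entering the run and the one leaving it both have their apex inside the run and source node $\sigma$. Hence both have base $\{(\sigma,x),(\sigma,y)\}$ with $\{x,y\}=g$.
\item These are distinct cycle hyperedges because $1\le s\le m-1$. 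Injectivity of $\varphi$ then makes them share two vertices of $F$, contradicting linearity.
\end{enumerate}
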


\begin{proof}
Since $F$ is linear, $m\ge3$.  Consecutive nodes
$\nu(v_i),\nu(v_{i+1})$ are equal or comparable in the initial-segment
order, because the two vertices lie in one lift edge.  Let
\[
 d=\min_i\lh(\nu(v_i))
\]
and choose $j$ attaining the minimum.  Put $\sigma=\nu(v_j)$.

We first record an elementary prefix observation.  Suppose
$\sigma_0,\ldots,\sigma_t$ are sequences such that consecutive terms are
comparable, every term has ordinal length at least $d$, and
$\lh(\sigma_0)=d$.  If $\sigma_i$ extends $\sigma_0$ and
$\sigma_{i+1}$ is comparable with $\sigma_i$, then either
$\sigma_i\subseteq\sigma_{i+1}$ or $\sigma_{i+1}$ is a prefix of
$\sigma_i$ of length at least $d$.  In both cases $\sigma_{i+1}$ extends
$\sigma_0$.  Traversing the cycle from $v_j$ therefore shows that every
$\nu(v_i)$ extends $\sigma$.  In particular, every connector node of
length $d$ is exactly $\sigma$.

Suppose some connector node properly extends $\sigma$.  In cyclic order,
choose a maximal nonempty interval of connector vertices whose nodes
properly extend $\sigma$, bounded at both ends by connector nodes equal
to $\sigma$.  Write its node sequence as
\[
 \sigma,\tau_1,\ldots,\tau_s,\sigma.
\]
If the chosen occurrence of $\sigma$ is the only connector on the cycle
whose node equals $\sigma$, the two boundary hyperedges are nevertheless
distinct: they are the two distinct cycle hyperedges incident with that
connector, and a Berge cycle has length at least $3$.  Every internal
$\tau_i$ is a proper extension of $\sigma$, and consecutive internal
nodes are comparable.  Hence all $\tau_i$ have the same value at
coordinate $d$; denote this graph edge by $a\in E(A)$.

At the first boundary hyperedge, the connector at $\tau_1$ is the unique
apex, because the other boundary connector has node $\sigma\ne\tau_1$.
Thus the connector at $\sigma$ and the third vertex of the source
hyperedge form the base at $\sigma$, whose image is the unordered host
pair
\[
 \{(\sigma,x),(\sigma,y)\},\qquad \{x,y\}=a.
\]
The return boundary hyperedge has the same host base pair, since
$\tau_s(d)=a$.  Both boundary image edges therefore contain exactly the
same two base vertices $(\sigma,x)$ and $(\sigma,y)$.  Since $\varphi$ is
injective on vertices, each of these two host vertices has a unique source
preimage.  Hence the two distinct source hyperedges contain the same two
source vertices, contradicting linearity.  Therefore no connector node
properly extends $\sigma$, and all connector nodes are equal.
\end{proof}

\begin{lemma}[Quotient forest]
\label{lem:quotient-forest}
Let $G$ be a graph and let $S\subseteq E(G)$ consist of bridges of $G$.
Contract every component of $G-S$ and retain the edges in $S$.  The
resulting quotient is a simple forest.
\end{lemma}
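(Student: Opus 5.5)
We must show two things about the quotient multigraph $Q$ obtained by contracting each component of $G-S$ to a single node while keeping the edges of $S$. First, $Q$ has no loops and no parallel edges, so it is simple. Second, $Q$ has no cycles. The single tool is the characterisation of a bridge: an edge $e=uv$ of $G$ is a bridge exactly when no cycle of $G$ contains $e$. Equivalently, $u$ and $v$ lie in different components of $G-e$.

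For loops, let $e=uv\in S$. If $u$ and $v$ were in the same component of $G-S$, there would be a $u$--$v$ path in $G-S$. This path avoids $e$, so together with $e$ it forms a cycle in $G$ through $e$. That contradicts $e$ being a bridge, so no edge of $S$ becomes a loop.

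Parallel edges are the case $k=2$ of a general cycle argument, which we now give. Suppose $Q$ contains a cycle $K_0,s_0,K_1,s_1,\ldots,K_{k-1},s_{k-1},K_0$ with $k\ge2$. Here the $K_i$ are distinct components of $G-S$, the $s_i\in S$ are distinct, and $s_i$ joins a vertex $b_i\in K_i$ to a vertex $a_{i+1}\in K_{i+1}$, with indices taken modulo $k$. Each component $K_i$ is connected in $G-S$, so we can choose a path $P_i$ inside $K_i$ from $a_i$ to $b_i$; this path is trivial if $a_i=b_i$. Concatenating
\[
 P_0,\ s_0,\ P_1,\ s_1,\ \ldots,\ P_{k-1},\ s_{k-1}
\]
gives a closed walk in $G$.

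We must check that this closed walk actually contains a cycle through $s_0$. The paths $P_i$ lie in pairwise disjoint vertex sets, since the $K_i$ are distinct components. The edges $s_i$ are distinct, and none of them lies inside any $P_i$, because the $P_i$ use only edges of $G-S$. So the walk meets each $K_i$ in one contiguous segment and repeats no vertex. It is therefore a genuine cycle of $G$, and it passes through $s_0$. This contradicts $s_0$ being a bridge. Taking $k=2$ rules out parallel edges, and $k\ge3$ rules out longer cycles. Hence $Q$ is a simple forest.

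The only point needing care is this last verification: that the lifted closed walk is a genuine cycle rather than just a closed walk. That follows from the components being vertex-disjoint and the $s_i$ being distinct edges of $S$. Alternatively one can avoid the check: the walk uses $s_0$ exactly once, so $s_0$ lies on a closed trail, and any edge on a closed trail lies on a cycle, which again contradicts $s_0$ being a bridge.
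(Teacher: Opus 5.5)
Your proof is correct and takes the same route as the paper. You expand a quotient cycle back into $G$ by joining the selected edges with paths inside the contracted components, treating parallel quotient edges as the case $k=2$; this gives a second route around a selected edge and contradicts its being a bridge. You are more explicit than the paper's one-line argument about loops and about why the lifted closed walk is a genuine cycle (in the case $k=2$ with trivial paths this uses that $G$ is simple), although, as you note, the walk from $a_1$ to $b_0$ avoiding $s_0$ already suffices.
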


\begin{proof}
Every edge of $S$ joins two distinct components of $G-S$.  If the
quotient contained a cycle, including a pair of parallel quotient edges,
then every selected edge on that cycle would have an alternative path
between its endpoints in $G$, obtained by expanding the other quotient
edges and the contracted components.  It would not be a bridge, a
contradiction.
\end{proof}

\begin{lemma}[Labelled forest lemma]
\label{lem:labelled-forest}
Let $T$ be a finite forest.  Orient its edges arbitrarily and label every
oriented edge by an element of a nonempty alphabet $\Omega$.  There are
pairwise distinct finite words $(w_x:x\in V(T))$ over $\Omega$ such that,
for every oriented edge $x\to y$ with label $a$,
\[
 w_x\subsetneq w_y
 \qquad\text{and}\qquad
 w_y(\lh(w_x))=a.
\]
\end{lemma}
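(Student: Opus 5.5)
We prove the statement by induction on the number of vertices of $T$, adding one leaf at a time. Alternatively we work component by component, choosing a root in each component and assigning words down the tree. The plan below uses the rooted construction and then takes care of distinctness across components.

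Fix a letter $\omega_0\in\Omega$, which exists because $\Omega$ is nonempty. Since the edge orientations are arbitrary, a plain root-to-leaf assignment will not satisfy the required prefix relations. So we use the leaf-removal induction, which handles both orientations.

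If $T$ has no edges, set $w_x=\omega_0^{\,i_x}$ with pairwise distinct exponents $i_x$; this gives distinct words. Otherwise choose a leaf $y$ with unique neighbour $x$. By induction we obtain distinct words for $T-y$ satisfying the condition on every edge of $T-y$. We then extend to $y$ in one of two cases.

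\emph{Case $x\to y$ with label $a$.} Set $w_y=w_x\concat a\concat\omega_0^{\,N}$ with $N$ large enough that $w_y$ is longer than every existing word. Then $w_x\subsetneq w_y$, $w_y(\lh(w_x))=a$, and $w_y$ is new.

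\emph{Case $y\to x$ with label $a$.} We need a proper prefix $w_y$ of $w_x$ with $w_x(\lh(w_y))=a$, and $w_x$ may contain no such position. This is the main obstacle. We get around it by prepending padding to \emph{every} word and renaming positions, so that each word starts with a fresh block we control.

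The cleanest way to carry this out is to strengthen the induction hypothesis to allow shifting. Choose a root and build from scratch a function $w_x=\pi\concat u_x$, where $\pi$ is a common prefix. Each oriented edge is realised by extending along the side of the head. Concretely, for each component we orient it temporarily from a root $r$ and build words where the tree-parent's word is a prefix of the child's. A reversed edge $y\to x$, with $x$ the tree-parent of $y$, cannot be handled that way. So instead of prefix-by-parent we use prefix-by-head throughout. Every oriented edge $x\to y$ requires $w_x\subsetneq w_y$, so the words restricted to any component form a poset consistent with a tree. Taking lengths $\ell_x$, every edge $x\to y$ requires $\ell_x<\ell_y$ and prefix agreement. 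Since $T$ is a forest, the undirected path between any two vertices is unique. We therefore assign to each vertex the word obtained by reading, along the directed constraints, letters at prescribed positions. We place $\ell_x$ as distinct integers compatible with the orientation (a topological order of the acyclic orientation, spaced apart). For each position $n$, the letter $w_y(n)$ is determined by the edge $x\to y$ with $\ell_x=n$, if one exists; otherwise it is $\omega_0$. Consistency requires that if $x\to y$ and $x\to y'$, then both $y$ and $y'$ agree on positions below $\ell_x$ with $w_x$. This holds because we define $w_y$ as $w_x$, followed by the label, followed by filler, whenever $y$ has an in-neighbour. A vertex can have two in-neighbours $x,x'$, however, and this is where the real work lies. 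We then need $w_{x}\subsetneq w_{x'}$ or the reverse, consistent with the labels. We arrange this by recursion over the tree: we process the component by BFS from a root and, whenever we meet a vertex with several in-neighbours, we chain those in-neighbours' words as nested prefixes. Acyclicity of the forest guarantees that no conflicting constraint returns.

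Distinctness across components is obtained by giving each component a distinct padding length.
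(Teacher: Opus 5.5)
Your edgeless base case and your forward case (new leaf $y$ with $x\to y$) are fine and match the paper. The gap is the reverse case $y\to x$, which is the only nontrivial step. You locate the obstacle correctly and even name the remedy (prepend a block to every word), but you never carry it out, and the global construction you switch to does not fill the gap.

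That construction gives two incompatible definitions of $w_y$: letters read only off edges into $y$, versus $w_x\concat\langle a\rangle\concat(\text{filler})$. Neither works. Take the path $x\to y\leftarrow x'$ with labels $a$ and $b$.
\begin{itemize}
\item Under the letter rule, take lengths $\ell_x=0<\ell_{x'}<\ell_y$ from a topological order, spaced or not. Then $w_{x'}=\omega_0^{\ell_{x'}}$ while $w_y(0)=a$, so $w_{x'}$ is not a prefix of $w_y$ unless $a=\omega_0$. The rule ignores the induced constraint that $w_{x'}$, being a prefix of $w_y$ longer than $w_x$, must itself carry $a$ at coordinate $\lh(w_x)$.
\item Under the BFS reading, once $w_x=\varnothing$ and $w_y=\langle a\rangle\concat\omega_0^{N}$ are fixed, there is no admissible $w_{x'}$ if $b\neq\omega_0$. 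In general, room for the whole subtree behind $x'$ must be reserved before $w_y$ is committed. This is your original obstacle, relocated.
\end{itemize}
``Acyclicity guarantees that no conflicting constraint returns'' is precisely the statement that would need proof. A global argument can be made rigorous, but none of it is in your proposal. One way: fix lengths by a topological order; close the prefix relation under ``two prefixes of one word are comparable''; prove by an invariant across each edge $y\to z$ that no vertex strictly above $y$ lies weakly below two distinct out-neighbours of $y$; then assign letters along the resulting tree order.

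The missing step is short. If the new leaf $y$ carries the edge $y\to x$ with label $a$, replace every old word $u_v$ by $\langle a\rangle\concat u_v$ and put $w_y=\varnothing$. Old prefix relations survive, and
\[
 (\langle a\rangle\concat u_{v'})\bigl(\lh(\langle a\rangle\concat u_v)\bigr)=u_{v'}(\lh(u_v)),
\]
so old labels are unchanged. The new edge holds at coordinate $0$. Since $\varnothing$ is the only word of length $0$, distinctness survives. This is the paper's proof.

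Your last sentence also needs repair: distinct padding lengths per component do not yield distinct words. For $\Omega=\{\omega_0\}$ a word is determined by its length, so $\omega_0^{N_1}\concat u$ and $\omega_0^{N_2}\concat u'$ can coincide. The paper pads so that the sets of lengths used in different components are disjoint. Alternatively, every forest with an edge has a leaf and your base case already handles edgeless forests, so the leaf induction runs on the whole forest and no component step is needed.
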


\begin{proof}
First treat a tree, by induction on its number of vertices.  The
one-vertex case is immediate.  Let $z$ be a leaf with neighbour $y$, and
apply the induction hypothesis to $T-z$, obtaining old words
$(u_x:x\in V(T-z))$.

If the new edge is oriented $y\to z$ and has label $a$, retain
$w_x=u_x$ for $x\ne z$ and put
\[
 w_z=u_y\concat\langle a\rangle\concat u,
\]
where the finite padding word $u$ is chosen so that $w_z$ has ordinal
length larger than every old word.  The required symbol occurs at
coordinate $\lh(w_y)=\lh(u_y)$, and all words remain distinct.

If the new edge is oriented $z\to y$ and has label $a$, define
\[
 w_z=\varnothing,
 \qquad
 w_x=\langle a\rangle\concat u_x\quad(x\ne z).
\]
For an old oriented edge $x\to x'$ we have
\[
 w_{x'}(\lh(w_x))
 =w_{x'}(1+\lh(u_x))
 =u_{x'}(\lh(u_x)),
\]
so its prescribed symbol is unchanged.  The new edge condition holds at
coordinate $0$, and distinctness is preserved.

For a forest, perform this construction in each tree component.  Fix
$a_0\in\Omega$.  Process the components one at a time and prefix every
word in the next component by $a_0^N$, choosing $N$ so that the finite
set of new word lengths is disjoint from every length used earlier.
Common prefixing preserves all internal edge conditions, while disjoint
sets of lengths guarantee global distinctness.
\end{proof}

\begin{theorem}[Bridge-trace theorem]
\label{thm:bridge-trace}
Let $F$ be a finite linear triple system without isolated vertices and
with $E(F)\ne\varnothing$, let $A$ be a graph with at least one edge, and
let $\kappa$ be infinite.
Then
\[
 F\hookrightarrow\Lift(A,\kappa)
\]
if and only if $F$ has a bridge selector $p$ such that every derivative
$D_C(F,p)$ embeds in $A$.
\end{theorem}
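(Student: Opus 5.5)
We prove the two implications separately; the forward direction extracts a bridge selector from an embedding, and the backward direction builds an embedding from the labelled forest lemma.

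\emph{Forward direction.} Let $\varphi:F\hookrightarrow\Lift(A,\kappa)$ with $\varphi(v)=(\nu(v),a(v))$. For each $e\in E(F)$, the image $\varphi(e)$ is a lift edge and so has a unique apex. Define $p(e)$ to be the source vertex mapped to that apex. The remaining two vertices of $e$ share a common source node $\sigma_e$, and their $A$-coordinates form an edge of $A$.

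The first task is to show that $ep(e)$ is a bridge of $I(F)$. If it were not, it would lie on a Levi cycle, that is, on a Berge cycle of $F$ through $e$ in which $p(e)$ is one of the two connectors of $e$. By \Cref{lem:cycle-collapse}, all connectors of this cycle have the same sequence node. In particular $\nu(p(e))$ equals the node of the other connector of $e$, which is $\sigma_e$. This is impossible, because the apex node properly extends $\sigma_e$. So $p$ is a bridge selector.

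Next, fix an active component $C$ of $I(F)$ with the selected incidences deleted. Inside $C$, each surviving incidence joins a base vertex to its hyperedge, and every base vertex of $e$ has node $\sigma_e$. Hence $\nu$ is constant, equal to some $\sigma_C$, on $C\cap V(F)$. Because $\varphi$ is injective, the map $v\mapsto a(v)$ is injective on $C\cap V(F)$. Each derivative edge $e\setminus\{p(e)\}$ is sent to the base pair of $\varphi(e)$, which is an edge of $A$. Therefore $a$ embeds $D_C(F,p)$ into $A$.

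\emph{Backward direction.} Suppose $p$ is a bridge selector and we have embeddings $\psi_C:D_C(F,p)\hookrightarrow A$. Apply \Cref{lem:quotient-forest} to $I(F)$ with $S$ the set of selected incidences; the quotient $Q$ is a finite forest. Its nodes are the active components $C$ together with singleton components $\{v\}$ of point-nodes all of whose incidences are selected. Orient each edge of $Q$ coming from a selected incidence $ep(e)$ from the component containing $e$ to the component containing $p(e)$. Label that edge by the derivative edge $\psi_C(e\setminus\{p(e)\})\in E(A)$; this makes sense because $e\setminus\{p(e)\}$ lies in $C$. Then \Cref{lem:labelled-forest}, with $\Omega=E(A)$, produces pairwise distinct finite words $w_X$ for the components $X$. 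Finite words lie in $T(A,\kappa)$ because $\kappa$ is infinite.

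Define $\varphi(v)=(w_X,\psi_X(v))$ when $v$ lies in an active component $X$. When $v$ forms a singleton component, set $\varphi(v)=(w_{\{v\}},z)$ for an arbitrary vertex $z$ of $A$, which exists since $A$ has an edge.

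\begin{itemize}
\item \emph{Injectivity.} Points in different components receive distinct words, and within a component the map $\psi_X$ is injective.
\item \emph{Each hyperedge maps to a lift edge.} The edge $e$ goes to the triple consisting of two base vertices with node $w_C$ whose $A$-coordinates form the labelled edge, together with an apex with node $w_Y$. Here $w_C\subsetneq w_Y$ and $w_Y(\lh(w_C))$ equals the label, which is exactly the defining condition of a lift edge.
\end{itemize}

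The main obstacle is the bridge argument in the forward direction. We must make sure that a non-bridge incidence $ep(e)$ really yields a Berge cycle in which $p(e)$ is a connector of $e$. A shortest path in $I(F)-ep(e)$ from $p(e)$ to $e$ gives this, because $I(F)$ is bipartite. That cycle has length at least $3$ by linearity, so \Cref{lem:cycle-collapse} applies.
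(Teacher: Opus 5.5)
Your proposal is correct and follows the paper's proof essentially step for step. For necessity, you define the selector by the image apex, show it is a bridge selector via \Cref{lem:cycle-collapse}, and embed each derivative by the second-coordinate projection once the sequence node is constant on a component. For sufficiency, you orient and label the quotient forest from \Cref{lem:quotient-forest} and apply \Cref{lem:labelled-forest} to get distinct finite words.
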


\begin{proof}[Necessity]
Suppose $\varphi:F\hookrightarrow\Lift(A,\kappa)$.  Every image edge has
a unique apex.  For each $e\in E(F)$, let $p(e)$ be the vertex whose
image is that apex.

We claim that the incidence $ep(e)$ is a bridge of $I(F)$.  In a finite
graph an edge is a bridge if and only if it lies on no cycle: if deleting
an edge leaves a path between its endpoints, that path together with the
edge is a cycle, and the converse is immediate.  Thus, if $ep(e)$ were
not a bridge, it would lie on a Levi cycle.  That cycle is a Berge cycle
and, at the
hyperedge-node $e$, uses the incidence to $p(e)$ and another incidence
to a base vertex.  By \Cref{lem:cycle-collapse}, those two point-vertices
have the same sequence node.  This contradicts the strict prefix
relation between an apex node and a base node.  Thus $p$ is a bridge
selector.

Remove the selected incidences.  Each surviving incidence joins an
edge-node to one of its two base vertices.  Consequently, along any
path in a component $C$ containing hyperedge-nodes, all point-nodes have
one common sequence node, say $\sigma_C$.  The second-coordinate
projection is injective on those point-nodes because the original
embedding is injective and their first coordinates agree.  For each $e\in C$, it sends the two vertices other than $p(e)$ to adjacent vertices of $A$.
Two derivative edges cannot collapse to one graph edge: their source
pairs are distinct by linearity, and the point map is injective.
Therefore the projection embeds $D_C(F,p)$ in $A$.
\end{proof}

\begin{proof}[Sufficiency]
Suppose $p$ is a bridge selector and, for every component $C$ containing
hyperedge-nodes, fix an embedding
\[
 \psi_C:D_C(F,p)\hookrightarrow A.
\]
Delete the selected incidences and contract the resulting components.
By \Cref{lem:quotient-forest}, the quotient $T$ is a finite simple forest.
Orient the quotient edge arising from $ep(e)$ from the component $C_e$
containing the hyperedge-node $e$ towards the component $C_{p(e)}$
containing the point-node $p(e)$, and label it by
\[
 \psi_{C_e}(e\setminus\{p(e)\})\in E(A).
\]
Apply \Cref{lem:labelled-forest} with alphabet $E(A)$, obtaining a
distinct finite word $w_C$ for every quotient component $C$.

If a point-node $v$ belongs to a component $C$ containing
hyperedge-nodes, define
\[
 \Phi(v)=(w_C,\psi_C(v)).
\]
A component with no hyperedge-node is a singleton point-node; for such a
point $v$, choose any $a_v\in V(A)$ and put
$\Phi(v)=(w_C,a_v)$.  The map $\Phi$ is injective: inside an active
component this follows from injectivity of $\psi_C$, and between
components it follows from distinctness of the words.

Let $e=\{x,y,p(e)\}$.  Its two base points $x,y$ lie in $C_e$, and
\[
 \{\psi_{C_e}(x),\psi_{C_e}(y)\}
   =\psi_{C_e}(e\setminus\{p(e)\}).
\]
The labelled forest condition gives
$w_{C_e}\subsetneq w_{C_{p(e)}}$ and puts this graph edge at coordinate
$\lh(w_{C_e})$.  Hence
\[
 \{\Phi(x),\Phi(y),\Phi(p(e))\}
\]
is a lift edge.  Every $w_C$ is finite, so $\lh(w_C)<\kappa$ for every
infinite $\kappa$.  Thus $\Phi$ embeds $F$.
\end{proof}

\begin{remark}[Isolated vertices]
\label{rem:bridge-trace-isolated}
Every lift in \Cref{thm:bridge-trace} is infinite.  By
\Cref{lem:isolated-reduction}, adjoining or deleting finitely many isolated
source vertices does not affect embeddability in such a lift.  Thus the
bridge-trace theorem extends verbatim to an arbitrary finite linear $F$
after $F$ is replaced by $F^\circ$; the empty reduced system is handled
vacuously.
\end{remark}

\section{Obligatory triple systems and Erd\H{o}s Problem~\#593}
\label{sec:classification}

We now prove the finite decomposition.  The running-intersection lemma is
stated separately because the fact that the quotient is a forest does
not, by itself, identify how the corresponding expansion pieces
intersect.

By \Cref{lem:isolated-reduction}, isolated vertices may be removed and
restored at the end.  Fix a finite linear triple system $F$ without
isolated vertices and a bridge selector $p$.  Let
\[
 S_p=\{ep(e):e\in E(F)\},
\]
and let $\mathcal C$ be the set of components of $I(F)-S_p$.  A component
is \emph{active} if it contains a hyperedge-node.  For active $C$, let
$E_C$ be the set of hyperedges whose hyperedge-nodes lie in $C$, and let
$F_C$ be the subhypergraph with edge set $E_C$ and vertex set
$\bigcup E_C$.

\begin{lemma}[Expansion pieces]
\label{lem:expansion-piece}
For every active component $C$,
\[
 F_C\cong D_C(F,p)^+.
\]
\end{lemma}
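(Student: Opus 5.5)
We exhibit the isomorphism directly. Send every core vertex of $D_C(F,p)^+$, that is, every point-node $v\in C\cap V(F)$, to itself. Send the private vertex $p_{e\setminus\{p(e)\}}$ attached to the derivative edge of $e\in C$ to the selected point $p(e)$. Every hyperedge $e\in E_C$ is then the image of the expanded edge $(e\setminus\{p(e)\})\cup\{p_{e\setminus\{p(e)\}}\}$. The map on edges is a bijection, because derivative edges are distinct by linearity. It remains to check that the vertex map is a bijection onto $\bigcup E_C$.

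For surjectivity and well-definedness, take $e\in E_C$. Its two unselected incidences survive the deletion of $S_p$, so the two points of $e\setminus\{p(e)\}$ lie in $C$. Hence $\bigcup E_C$ consists of $C\cap V(F)$ together with the points $p(e)$ for $e\in E_C$. Conversely, every point-node $v\in C$ lies in some hyperedge of $E_C$. Indeed, $C$ is active and connected, so $v$ has a surviving incidence $ev$ with $e\in C$, unless $C=\{v\}$, which is impossible since an active component contains a hyperedge-node.

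Injectivity reduces to two claims.
\begin{enumerate}[label=\textup{(\alph*)}]
\item $p(e)\notin C$ for every $e\in E_C$.
\item $p(e)\ne p(e')$ for distinct $e,e'\in E_C$.
\end{enumerate}
Together these show that the private vertices are pairwise distinct and disjoint from the core.

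For (a), suppose $p(e)\in C$. Then $C$ contains a path from $e$ to $p(e)$ avoiding $S_p$, in particular avoiding the incidence $ep(e)$. That path together with $ep(e)$ is a cycle in $I(F)$, so $ep(e)$ lies on a cycle and is not a bridge, a contradiction.

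For (b), suppose $p(e)=p(e')=v$ with $e\ne e'$ in $C$. Take a path from $e$ to $e'$ inside $C$; it avoids $S_p$. Closing it up with the incidences $e'v$ and $ve$ gives a closed walk. This is a genuine cycle, since $v\notin C$ by (a) and so $v$ is not on the path. The cycle contains the selected incidence $ev$, contradicting that $ev$ is a bridge. Equivalently, one could argue that $ev$ and $e'v$ would form two parallel quotient edges between $C$ and the component of $v$, which \Cref{lem:quotient-forest} rules out.

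The main obstacle is (b). There one must see that two selected incidences leaving the same component cannot end at one point, and this is precisely where the bridge property, through the forest structure of \Cref{lem:quotient-forest}, is used rather than just linearity.

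Once (a) and (b) hold, the vertex map is a bijection $V(D_C(F,p)^+)\to V(F_C)$ carrying edges onto edges, so $F_C\cong D_C(F,p)^+$.
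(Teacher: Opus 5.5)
Your proof is correct and follows essentially the same route as the paper's. Both arguments get $p(e)\notin C$ from the bridge property, and both rule out $p(e)=p(e')$ by closing an $e$--$e'$ path inside $C$ through the shared selected point into a cycle that contains a selected bridge. You also check explicitly that every point-node of $C$ lies on some edge of $E_C$, so that the core of $D_C(F,p)^+$ matches $V(F_C)$ minus the selected points; the paper leaves this detail implicit.
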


\begin{proof}
Fix $e\in E_C$.  The selected point $p(e)$ lies outside $C$, because the
selected bridge $ep(e)$ has its endpoints in distinct components after
it is deleted.  We prove that $p(e)$ is private in the piece $F_C$ by
considering the two possible ways in which it could occur on another
edge $f\in E_C$.

First suppose that $p(e)$ were a base point of $f$.  Then the incidence
$fp(e)$ survives the deletion, so the point-node $p(e)$ lies in the same
component $C$ as the hyperedge-node $f$.  This contradicts the fact that
the endpoints of the deleted bridge $ep(e)$ lie in distinct components.

Second suppose that $p(e)=p(f)$ for some distinct $f\in E_C$.  Since
$e$ and $f$ lie in the connected component $C$, there is an $e$--$f$
path inside $I(F)-S_p$.  Together with the two selected incidences
through their common point, this path gives an alternative route around
each selected incidence.  Neither selected incidence would be a bridge,
a contradiction.

Thus the selected points $p(e)$, $e\in E_C$, are pairwise distinct and
each occurs on only its own edge of $F_C$.  The remaining pair
$e\setminus\{p(e)\}$ is exactly the corresponding edge of
$D_C(F,p)$.  This identifies $F_C$ with the private-vertex expansion
$D_C(F,p)^+$.
\end{proof}

\begin{lemma}[Leaf-piece or running-intersection lemma]
\label{lem:leaf-piece}
Let $T$ be the quotient forest obtained by contracting the components of
$I(F)-S_p$.  Root each tree component of $T$.  Order the active
components by nondecreasing distance from the root, breaking ties
arbitrarily.  When an active piece $F_C$ is added, its vertex set meets
the union of all previously added active pieces in at most one point.
\end{lemma}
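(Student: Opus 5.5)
The plan is to describe, for each point $v$, which active pieces contain it, and then use that the quotient $T$ is a simple forest (\Cref{lem:quotient-forest}). That description shows every point shared with earlier pieces is the point-node endpoint of the single quotient edge joining $C$ to its parent.

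\textbf{Step 1 (membership).} Let $C_v$ be the component of $I(F)-S_p$ containing the point-node $v$. For an active $D$, every vertex of $F_D$ either lies on a surviving incidence of some $e\in E_D$, hence lies in $D$, or equals $p(e)$ for some $e\in E_D$. Conversely, every point-node of an active component is adjacent within that component to some hyperedge-node, since the component is connected and contains a hyperedge-node. So $v\in V(F_D)$ if and only if either $D=C_v$, or $v=p(e)$ for some $e\in E_D$. In the second case $D$ is joined to $C_v$ in $T$ by the quotient edge coming from the selected incidence $ep(e)$. Thus the active pieces containing $v$ are $C_v$ (if active) together with some $T$-neighbours of $C_v$. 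In particular, every point of $F_C$ satisfies $C_w=C$ or $C_w$ adjacent to $C$ in $T$.

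\textbf{Step 2 (locating earlier pieces).} Let $C$ have depth $d$ in its rooted tree, and let $P$ be its parent if it exists. Pieces in other tree components share no point with $F_C$, because by Step 1 a point of $F_C$ only occurs in pieces within distance $2$ of $C$ in $T$. Let $w\in V(F_C)$ lie in an earlier piece $D$, so $D\ne C$ and $D$ has depth at most $d$. We split into three cases.
\begin{itemize}
\item If $C_w=C$, then $D$ is a $T$-neighbour of $C$ of depth at most $d$, so $D=P$. The connecting quotient edge is then the incidence $fw$ with $f\in E_P$ and $w=p(f)$.
\item If $C_w$ is a child of $C$, then $D$ is $C_w$ or a neighbour of $C_w$ other than $C$. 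In a rooted tree these have depth $d+1$ or $d+2$, so they cannot precede $C$; this case does not occur.
\item The remaining case is $C_w=P$, where $w=p(e)$ for some $e\in E_C$. The quotient edge between $C$ and $P$ is then the incidence $ew$. Here the earlier piece $D$ may be $P$ itself, or a sibling of $C$ if $P$ is an inactive singleton; either way the shared point is $w$.
\end{itemize}

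\textbf{Step 3 (conclusion).} In every surviving case the shared point $w$ is the point-node endpoint of a selected incidence realising a quotient edge between $C$ and $P$. Since $T$ is a simple forest, there is exactly one such edge, and it has exactly one point-node endpoint. Hence $V(F_C)$ meets the earlier union in at most one point, and in none if $C$ is a root. The main obstacle is Step 1: it needs care that a point $p(e)$ does not appear in $F_D$ in some other way, which the privacy argument of \Cref{lem:expansion-piece} rules out. It also needs care with equal-depth ties, where a sibling may be processed before $C$; this is handled by the observation that siblings can only share the parent's point, which is the same single point $w$.
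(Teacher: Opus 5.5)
Your proof is correct and follows the paper's argument essentially step for step. It uses the same membership criterion: a point lies in $F_D$ iff its component is $D$ or it equals $p(e)$ for some $e\in E_D$. It makes the same case split on whether the point's component is $C$, a child of $C$, or the parent, and it concludes the same way, from the uniqueness of the simple quotient edge joining $C$ to its parent.

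Two harmless slips. A sibling of $C$ can share $w$ even when $P$ is active, but your conclusion does not depend on which earlier piece $D$ it is. Also, Step~1 follows directly from $V(F_D)=\bigcup E_D$ and does not need the privacy argument of \Cref{lem:expansion-piece}.
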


\begin{proof}
For a point $v\in V(F)$, let $Q(v)\in\mathcal C$ be the component
containing its point-node.  The definition of $F_C$ gives the following exact membership
criterion.  A point $v$ belongs to $V(F_C)$ if and only if one of these
two alternatives holds:
\begin{enumerate}[label=\textup{(\alph*)},leftmargin=2.2em]
\item $Q(v)=C$;
\item $Q(v)$ is adjacent to $C$ in $T$, and the quotient edge $CQ(v)$
comes from a selected incidence $ep(e)$ with $e\in E_C$ and $p(e)=v$.
\end{enumerate}
Indeed, a base point of an edge in $E_C$ remains joined to its
hyperedge-node and therefore lies in $C$, whereas the selected point is
the point endpoint of the quotient edge corresponding to that selected
incidence.

Let $D$ be an earlier active component and suppose
$v\in V(F_C)\cap V(F_D)$.  Put $r=d_T(C)$, where $d_T$ denotes distance
from the chosen root.  Since $D$ was added no later than $C$, we have
$d_T(D)\le r$.

If $Q(v)=C$, then the membership criterion, applied to $F_D$, shows that
$D$ is adjacent to $C$.  Among the neighbours of $C$, the only one of
depth at most $r$ is its parent.  Hence $D$ is the parent of $C$, and
$v$ is the point endpoint of the unique parent edge of $C$.

Suppose instead that $Q(v)\ne C$.  Then the membership criterion shows that
$Q(v)$ is a neighbour of $C$.  It cannot be a child of $C$.  Such a
child has depth $r+1$, and in a rooted tree its only neighbour of depth
at most $r$ is $C$ itself; no distinct earlier active component $D$
could then also contain $v$.  Therefore $Q(v)$ is the parent of $C$.
Again the unique parent edge of $C$ has point endpoint $v$.

Thus every point shared with an earlier active piece is the point
endpoint of the unique quotient edge joining $C$ to its parent.  This
quotient edge represents one selected incidence and hence has exactly
one point endpoint.  All previously shared points are therefore equal.
If $C$ is a root, no such point exists and the intersection is empty.
\end{proof}

\begin{example}[A nontrivial quotient tree]
\label{ex:worked-decomposition}
Let $F$ have edges
\[
\begin{aligned}
 e_0&=\{a,b,v\},& e_1&=\{v,c,u\},& e_2&=\{c,h,d\},\\
 e_3&=\{d,e,q\},& e_4&=\{f,g,q\},
\end{aligned}
\]
and choose
\[
 p(e_0)=v,\quad p(e_1)=u,\quad p(e_2)=d,\quad
 p(e_3)=q,\quad p(e_4)=q.
\]
Every selected incidence is a Levi bridge.  After deletion, the active
components are $C_0,C_1,C_2,C_3$.  The inactive point components are
$U=\{u\}$ and $Q=\{q\}$.  The derivative edge sets are
\[
 E(D_{C_0})=\{ab\},\quad E(D_{C_1})=\{vc,ch\},\quad
 E(D_{C_2})=\{de\},\quad E(D_{C_3})=\{fg\}.
\]
Thus $F_{C_1}$ is the expansion of the two-edge path $v-c-h$, while the
other active pieces are one-edge expansions.
\end{example}

\begin{figure}[htbp]
\centering
\resizebox{\textwidth}{!}{%
\begin{tikzpicture}[
  point/.style={circle,draw,minimum size=5.5mm,inner sep=0pt},
  hedge/.style={draw,rounded corners=1pt,minimum width=7mm,minimum height=5.5mm},
  active/.style={draw,rounded corners,minimum width=10mm,minimum height=6mm},
  inactive/.style={circle,draw,minimum size=6mm,inner sep=0pt},
  selected/.style={densely dashed,line width=.75pt},
  qedge/.style={-{Latex[length=2mm]},line width=.55pt},
  every node/.style={font=\small}
]
  \node[hedge] (E0) at (0,0) {$e_0$};
  \node[point] (v) at (1.5,0) {$v$};
  \node[hedge] (E1) at (3,0) {$e_1$};
  \node[point] (c) at (4.5,0) {$c$};
  \node[hedge] (E2) at (6,0) {$e_2$};
  \node[point] (d) at (7.5,0) {$d$};
  \node[hedge] (E3) at (9,0) {$e_3$};
  \node[point] (q) at (10.5,0) {$q$};
  \node[hedge] (E4) at (12,0) {$e_4$};
  \node[point] (a) at (-.65,.95) {$a$};
  \node[point] (b) at (-.65,-.95) {$b$};
  \node[point] (u) at (3,1.05) {$u$};
  \node[point] (h) at (6,1.05) {$h$};
  \node[point] (ee) at (9,1.05) {$e$};
  \node[point] (f) at (12.65,.95) {$f$};
  \node[point] (g) at (12.65,-.95) {$g$};
  \draw (E0)--(a) (E0)--(b);
  \draw[selected] (E0)--(v);
  \draw (v)--(E1) (E1)--(c);
  \draw[selected] (E1)--(u);
  \draw (c)--(E2) (E2)--(h);
  \draw[selected] (E2)--(d);
  \draw (d)--(E3) (E3)--(ee);
  \draw[selected] (E3)--(q);
  \draw[selected] (q)--(E4);
  \draw (E4)--(f) (E4)--(g);
  \node[anchor=west] at (-.9,1.65) {Levi graph; dashed incidences are selected};

  \node[active] (C0) at (2.1,-3.1) {$C_0$};
  \node[active] (C1) at (4.4,-3.1) {$C_1$};
  \node[inactive] (U) at (4.4,-4.45) {$U$};
  \node[active] (C2) at (6.7,-3.1) {$C_2$};
  \node[inactive] (Q) at (9.0,-3.1) {$Q$};
  \node[active] (C3) at (11.3,-3.1) {$C_3$};
  \draw[qedge] (C0)--node[above] {$v$}(C1);
  \draw[qedge] (C1)--node[left] {$u$}(U);
  \draw[qedge] (C1)--node[above] {$d$}(C2);
  \draw[qedge] (C2)--node[above] {$q$}(Q);
  \draw[qedge] (C3)--node[above] {$q$}(Q);
  \node[anchor=west] at (1.2,-2.35) {Quotient forest, oriented toward the selected point component};
\end{tikzpicture}}
\caption{The Levi graph and quotient forest from
\Cref{ex:worked-decomposition}.}
\label{fig:worked-decomposition}
\end{figure}

Rooting the quotient at $C_1$, one reconstructs $F$ by taking
$F_{C_1}$ first, amalgamating $F_{C_0}$ at $v$, amalgamating $F_{C_2}$
at $d$, and finally amalgamating $F_{C_3}$ at $q$.  The inactive
component $U$ contributes no edge.  This example exhibits attachment
through both an active component and an inactive singleton point
component.

\begin{proposition}[Finite bridge decomposition]
\label{prop:finite-decomposition}
For a finite triple system $F$, the following are equivalent after
isolated vertices are removed.
\begin{enumerate}[label=\textup{(\roman*)}]
\item $F\in\Bclass$;
\item $F$ is linear, every hyperedge-node of $I(F)$ is incident with a
bridge, and every Berge cycle of $F$ has even length.
\end{enumerate}
\end{proposition}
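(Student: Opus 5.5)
We prove the two implications separately, working throughout with $F=F^\circ$; this is harmless by \Cref{lem:isolated-reduction}, which shows that membership in $\Bclass$ is unchanged by deleting isolated vertices.

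\emph{(ii)$\Rightarrow$(i).} Assume (ii), so $F$ is linear. Choose at every hyperedge-node $e$ an incident bridge, and let $p(e)$ be its point endpoint. This defines a bridge selector.

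By \Cref{lem:cycle-selector}, every derivative $D_C(F,p)$ is bipartite, because every Berge cycle of $F$ is even. By \Cref{lem:expansion-piece}, each active piece satisfies $F_C\cong D_C(F,p)^+$, so each piece is a generator of $\Bclass$.

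Next, order the active components as in \Cref{lem:leaf-piece}. Each newly added piece meets the union of the earlier pieces in at most one point. Adding it is therefore either a disjoint union (when the intersection is empty) or a one-point amalgamation (when it is a single point). By induction, the union of all active pieces lies in $\Bclass$.

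This union is all of $F$. Every edge lies in some $E_C$, and since $F$ has no isolated vertices, every vertex lies on some edge. Hence $F\in\Bclass$.

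\emph{(i)$\Rightarrow$(ii).} We use structural induction on a construction of $F$. To keep the reduced form, we pass through \Cref{lem:isolated-reduction} exactly as in its proof: each step of the construction becomes a disjoint union or one-point amalgamation of reduced systems, or a reduced generator.

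\emph{Generators.} Take $F=J^+$ with $J$ bipartite.
\begin{itemize}
\item $J^+$ is linear: two expanded edges $\{x,y,p_{xy}\}$ and $\{x',y',p_{x'y'}\}$ share at most one core vertex, and private vertices are never shared.
\item Every hyperedge-node has an incident bridge: the private vertex $p_{xy}$ has degree $1$ in the Levi graph, so its incidence is a bridge.
\item Every Berge cycle is even: a Berge cycle cannot pass through a private vertex, since that vertex lies in only one edge. So its connectors are core vertices, consecutive connectors are adjacent in $J$, and the cycle projects to a cycle of $J$ of the same length. That length is even because $J$ is bipartite.
\end{itemize}
Edgeless systems satisfy (ii) vacuously.

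\emph{Disjoint unions.} All three properties are preserved trivially, since Levi graphs, bridges and Berge cycles all live inside components.

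\emph{One-point amalgamation.} Suppose $F_0$ and $F_1$ are glued at a point $v$. The Levi graph $I(F)$ is then the union of $I(F_0)$ and $I(F_1)$ glued at the single node $v$, so $v$ is a cut vertex. We check each property.
\begin{itemize}
\item Linearity: an edge of $F_0$ and an edge of $F_1$ share at most the point $v$.
\item Bridges: a bridge of $I(F_i)$ remains a bridge of $I(F)$, because any cycle of $I(F)$ through the cut vertex $v$ stays within one side of the gluing.
\item Berge cycles: every cycle of $I(F)$ lies entirely in $I(F_0)$ or in $I(F_1)$, so the Berge cycles of $F$ are exactly those of $F_0$ and $F_1$, all of which are even.
\end{itemize}

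The main care is needed in the bookkeeping that interfaces with isolated-vertex reduction. When an amalgamation point is isolated in one factor, the construction is really a disjoint union of the reduced factors, and we handle it as that case. We also rely on the fact that a cut vertex separates cycles, meaning that every cycle of $I(F)$ lies on one side. This is standard and is the only graph-theoretic fact used beyond the lemmas already cited.
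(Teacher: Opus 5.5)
Your proof is correct and follows the paper's own argument essentially step for step: for (i)$\Rightarrow$(ii) you use structural induction over the generators, disjoint unions and one-point amalgamations (gluing Levi graphs at a single point-node), and for (ii)$\Rightarrow$(i) you choose a bridge selector and combine \Cref{lem:cycle-selector}, \Cref{lem:expansion-piece} and the ordering of \Cref{lem:leaf-piece}. The only difference is that you route explicitly through the reduced construction; this is harmless but not needed, since linearity, Levi-graph bridges and Berge cycles are all unaffected by isolated vertices.
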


\begin{proof}
Assume first that $F=J^+$ for a finite bipartite graph $J$.  The
incidence from every expanded edge to its private vertex is a bridge.
Every Berge cycle uses the two core incidences at each hyperedge-node and
suppresses to a graph cycle in $J$, so it has even length.  Linearity is
immediate.  Every finite edgeless system also satisfies the three
conditions after its isolated vertices are removed, since its reduction
is the empty system.  Disjoint unions preserve the three properties.  A one-point
amalgamation joins two Levi graphs at one point-node.  No cycle can use
edges from both sides, and a bridge from either factor cannot acquire an
alternative path through the other factor, which meets it only at the
amalgamating point.  Thus every member of $\Bclass$ satisfies~\textup{(ii)}.

Conversely, suppose~\textup{(ii)}.  At every hyperedge-node choose one
incident bridge, obtaining a bridge selector $p$.  Delete the selected
incidences.  For every active component $C$, form
$D_C=D_C(F,p)$.  Every $D_C$ is bipartite by \Cref{lem:cycle-selector}: an odd graph
cycle in a derivative would give an odd Berge cycle of the same length in
$F$.

By \Cref{lem:expansion-piece}, $F_C\cong D_C^+$, so every active piece is
a generator of $\Bclass$.  The active pieces contain all edges and,
because $F$ has no isolated vertices, their vertex sets cover $V(F)$.
Root the quotient forest and list the active pieces in the order from
\Cref{lem:leaf-piece}.  We maintain the following reconstruction
invariant: after the first $j$ active pieces have been added, the
constructed hypergraph is exactly the union of those $j$ source
subhypergraphs, with precisely their original vertex identifications in
$F$.  The invariant is immediate for the first piece.  Suppose it holds
through stage $j$.  The next piece $F_C$ meets the existing union in a set
of size $0$ or $1$ by \Cref{lem:leaf-piece}.  In the first case we take a
disjoint union.  In the second we identify the unique shared source
vertex and take a one-point amalgamation.  The resulting hypergraph is
therefore exactly the union of the first $j+1$ pieces with their original
identifications, so the invariant continues.  Even if several earlier
pieces already contain the same inactive attachment point, the new step
identifies only that one existing vertex.  Different quotient-tree
components are finally combined by disjoint union.  Hence $F\in\Bclass$.
\end{proof}

For completeness, we prove the closure property needed for the positive
direction.  The proof is elementary apart from the de Bruijn--Erd\H{o}s
compactness theorem for finite graph colourings.

\begin{lemma}
\label{lem:obligatory-closure}
The class of finite obligatory triple systems is closed under taking
subhypergraphs, finite disjoint unions, adjoining isolated vertices, and
one-point amalgamations.
\end{lemma}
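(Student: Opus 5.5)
The first three closures are quick, and the real work is one-point amalgamation. The plan is to handle the easy cases directly and then treat amalgamation by a compactness-based colouring argument.

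Recall that $F$ is obligatory when $F\hookrightarrow H$ for every triple system $H$ with $\chi(H)>\aleph_0$.

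\emph{Subhypergraphs.} A copy of $F$ contains a copy of every subhypergraph of $F$.

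\emph{Isolated vertices.} This is \Cref{lem:isolated-reduction}, since every uncountably chromatic host is infinite.

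\emph{Disjoint unions.} It suffices to treat two summands $F_0,F_1$ and iterate. Let $\chi(H)>\aleph_0$ and fix a copy $\varphi_0$ of $F_0$ in $H$. Its image $X$ is finite. The system $H-X$ (delete $X$ and every edge meeting it) is still uncountably chromatic: otherwise colour $H-X$ with countably many colours and give each vertex of $X$ a fresh private colour. Every edge meeting $X$ then contains a vertex of a singleton colour class, so this is a countable proper colouring of $H$, a contradiction. Hence $F_1$ embeds in $H-X$, disjointly from $\varphi_0$.

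\emph{One-point amalgamation.} Let $F$ be obtained by identifying $x_0\in F_0$ with $x_1\in F_1$, both factors obligatory, and let $\chi(H)>\aleph_0$. Call a vertex $v$ \emph{$i$-good} if there is an embedding of $F_i$ sending $x_i$ to $v$. Let $B_i$ be the set of vertices that are not $i$-good.

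The key claim is that $H[B_i]$, the subsystem induced on $B_i$, has countable chromatic number. Indeed, $F_i$ embeds in every uncountably chromatic subsystem, and any embedding into $H[B_i]$ sends $x_i$ to some $v\in B_i$, making $v$ $i$-good, which is impossible. Hence $H'=H-B_0-B_1$ is uncountably chromatic, since $B_0\cup B_1$ is countably colourable with colours disjoint from any colouring of the rest. This argument does not even need compactness.

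It remains to pick $v\in V(H')$ with disjoint witnesses. Here I would use a more robust notion of goodness. Say $v$ is \emph{robustly $i$-good} if, for every finite $Y\not\ni v$, some copy of $F_i$ avoiding $Y$ sends $x_i$ to $v$.

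The main obstacle is showing that the set of vertices that are not robustly good is countably colourable. A failing vertex has a finite blocking set $Y_v$, and we need to colour these vertices countably. I would attempt a more careful iterative argument. Let $U$ be the vertices that are not robustly $i$-good. Try to prove directly that $F_i$ does not embed in $H[U]$ with $x_i$ mapped appropriately, giving only that some vertex of $U$ is $i$-good within $U$.

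Since that falls short, I would fall back on the paper's hint: graph-colouring compactness (de Bruijn--Erd\H{o}s). The idea is to pass to the finitary structure of which finite configurations are blocked, and to build an auxiliary graph on $U$. In this graph, $v$ is joined to the vertices of its blocking sets in a way that makes every finite subgraph countably colourable; de Bruijn--Erd\H{o}s then gives a countable colouring of the whole. With that in hand, pick $v$ robustly good for both factors. Embed $F_0$ at $v$, then embed $F_1$ at $v$ avoiding the finite set $Y=\varphi_0(V(F_0))\setminus\{v\}$, which yields $F$.
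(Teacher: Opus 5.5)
Your treatment of subhypergraphs, isolated vertices and disjoint unions is correct and matches the paper. The reduction to vertices good for both factors is also a reasonable plan.

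\textbf{There is a genuine gap in the one-point amalgamation step.} You never prove the step the argument rests on: that the set $U$ of vertices which are not robustly $i$-good induces a countably colourable subsystem. The compactness sketch you offer for it fails, for two reasons.

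\emph{The colouring step gives nothing as stated.} Every finite graph is finitely colourable, so ``every finite subgraph is countably colourable'' holds for every graph, $K_{\aleph_1}$ included. The de Bruijn--Erd\H{o}s theorem only transfers a \emph{fixed finite} number $k$ of colours from the finite subgraphs to the whole graph. You therefore need a uniform finite bound. Your blocking sets $Y_v$, quantified over all finite $Y$, have no bounded size: if $v$ lies in exactly $n$ pairwise disjoint rooted copies, every blocking set has at least $n$ points.

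\emph{The link back to $H$ is missing.} Even a countable colouring of your auxiliary graph would not finish the argument. You must also show that each colour class $C$ gives a countably colourable $H[C]$. The reason is that a copy of $F_i$ inside $C$ rooted at $v$ avoids $Y_v$, so $H[C]$ is $F_i$-free, and hence countably colourable because $F_i$ is obligatory. That step does not appear in your proposal.

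\textbf{How to repair it.} At the end you only ever avoid $Y=\varphi_0(V(F_0))\setminus\{v\}$. So require robustness only against sets with $|Y|\le d:=|V(F_0)|-1$, and choose blocking sets with $|Y_v|\le d$. (Alternatively, keep arbitrary finite $Y$ and split $U$ according to $|Y_v|$.)
\begin{itemize}
\item Join $v$ to every point of $Y_v\cap U$. Orienting these edges away from $v$ gives outdegree at most $d$, so every finite subgraph is $2d$-degenerate and hence $(2d+1)$-colourable.
\item The de Bruijn--Erd\H{o}s theorem then gives a $(2d+1)$-colouring of the whole auxiliary graph.
\item Each colour class is $F_1$-free in $H$ by the argument above, so it is countably colourable.
\end{itemize}

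The paper runs this argument by contradiction and more directly, without robustness. It assumes $H$ is $F$-free and fixes one rooted copy $K_v$ of $F_0$ for each $0$-good $v$. It then joins $v$ to $V(K_v)\setminus\{v\}$, which gives the same outdegree bound $d$. In a colour class $C$, a copy of $F_1$ rooted at $v\in C$ meets $K_v$ only in $v$, so $F$-freeness alone forces $H[C]$ to be $F_1$-free.
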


\begin{proof}
Subhypergraphs and isolated vertices are immediate.  If $F_0,F_1$ are
obligatory and $H$ has uncountable chromatic number, find a copy of
$F_0$.  Deleting its finite vertex set leaves an uncountably chromatic
hypergraph; otherwise a countable colouring of the remainder, together
with finitely many fresh colours, would colour $H$.  Find a disjoint copy
of $F_1$ in the remainder.  This proves closure under finite disjoint
unions.

Now choose roots $r_i\in V(F_i)$ and let $F$ be their one-point
amalgamation.  Suppose, towards a contradiction, that an uncountably
chromatic hypergraph $H$ is $F$-free.  Let $B$ be the set of vertices
$v\in V(H)$ that can serve as the image of $r_0$ in a copy of $F_0$, and
put $A=V(H)\setminus B$.  Then $H[A]$ is $F_0$-free and hence countably
colourable.

For each $v\in B$, choose a rooted copy $K_v$ of $F_0$ with root $v$.
Define an auxiliary simple graph $D$ on $B$ by joining distinct $v,w$
when $w\in V(K_v)\setminus\{v\}$ or
$v\in V(K_w)\setminus\{w\}$.  Put $d=|V(F_0)|-1$.  Assign every
auxiliary edge to one endpoint whose selected rooted copy generated it,
and orient it away from that endpoint.  Each vertex has outdegree at
most $d$.  Consequently, for every finite $U\subseteq B$,
\[
  |E(D[U])|\le d|U|.
\]
The same estimate applies to $D[W]$ for every nonempty $W\subseteq U$.
Thus every such $D[W]$ has average degree at most $2d$ and hence has a
vertex of degree at most $2d$.  It follows, by repeatedly deleting such
a vertex inside the finite graph $D[U]$, that $D[U]$ is
$2d$-degenerate and therefore $(2d+1)$-colourable.  Every finite subgraph
of $D$ is consequently $(2d+1)$-colourable.  The de Bruijn--Erd\H{o}s
compactness theorem for graph colouring \cite{deBruijnErdos} now gives a
proper $(2d+1)$-colouring of all of $D$.

Let $C$ be one colour class of $D$.  We claim that $H[C]$ is $F_1$-free.
If it contained a copy of $F_1$ whose root is mapped to $v\in C$, then
$V(K_v)\setminus\{v\}$ would be disjoint from $C$: points outside $B$
are not in $C$, and points in
$B\cap(V(K_v)\setminus\{v\})$ are adjacent to $v$ in $D$.  The two
rooted copies would therefore meet exactly at $v$, giving a copy of $F$,
a contradiction.  Since $F_1$ is obligatory, each $H[C]$ is countably
colourable.  Colour $H[A]$ and the finitely many classes $H[C]$ with
pairwise disjoint countable palettes.  A crossing edge receives colours
from at least two palettes, while an edge inside one part is
nonmonochromatic by construction.  This countably colours $H$, a
contradiction.
\end{proof}

\begin{theorem}[Reiher]
\label{thm:Reiher}
For every positive integer $n$, the triple-system expansion
$K_{n,n}^+$ is obligatory.
\end{theorem}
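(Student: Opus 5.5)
Since this statement is item (E4) of the external interface, the plan is to import it from Reiher \cite[Theorem~1.2]{ReiherObligatory}; its only role below is as the positive atom of the classification. If I had to reconstruct the argument, it would run as follows.

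Fix $n$ and suppose, towards a contradiction, that $H$ is a triple system with $\chi(H)>\aleph_0$ that contains no copy of $K_{n,n}^+$. Consider the \emph{shadow graph} $P$ on $V(H)$, in which $xy$ is an edge when some hyperedge contains $\{x,y\}$. A witness of $xy$ is a third vertex $z$ with $\{x,y,z\}\in E(H)$. A copy of $K_{n,n}^+$ is exactly a copy of $K_{n,n}$ in $P$ together with a choice of $n^2$ pairwise distinct witnesses that avoid the $2n$ core vertices. So the plan splits the shadow pairs into two kinds. A pair is \emph{rich} if it has at least $n^2+2n$ witnesses, and \emph{poor} otherwise. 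A copy of $K_{n,n}$ built entirely from rich pairs lifts greedily to $K_{n,n}^+$, so the rich shadow graph $P_{\mathrm{rich}}$ must be $K_{n,n}$-free.

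The key steps are these.
\begin{enumerate}[label=(\arabic*)]
\item Colour the poor part. Every poor pair has fewer than $n^2+2n$ witnesses, so the hyperedges all of whose three pairs are poor form a triple system of bounded local multiplicity. A closure argument with a continuous chain of countable elementary submodels $M_\alpha$ should give such a system a countable colouring in which each point's ``later'' neighbours are controlled. This is the same style of argument as in Erd\H{o}s--Hajnal--Rothschild.
\item Colour the rich part. By the classical Erd\H{o}s--Hajnal theorem, the $K_{n,n}$-free graph $P_{\mathrm{rich}}$ has countable colouring number: every vertex has only finitely many earlier neighbours in a suitable well-order.
\item Combine. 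I would like to deduce that every hyperedge containing a rich pair is nonmonochromatic. I would run both colourings along a single elementary chain and take a product colouring, since a product of countably many countable colourings is still countable.
\end{enumerate}

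The main obstacle is step (3). A hyperedge may have exactly one rich pair while its other two pairs are poor, and the two colourings then have to be synchronised. What I would try first is to work along the chain $(M_\alpha)$ and colour each vertex of $M_{\alpha+1}\setminus M_\alpha$ so that it avoids the colours of its finitely many $P_{\mathrm{rich}}$-predecessors and of the poor-pair partners inside $M_\alpha$. Elementarity would guarantee that any hyperedge reaching back into $M_\alpha$ through a poor pair already has its witnesses inside $M_\alpha$. If this bookkeeping fails, I would instead follow Reiher's original route, which is why the paper treats the result as an imported theorem rather than reproving it.
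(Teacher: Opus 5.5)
Your primary plan, importing the statement as the case $k=3$ of Reiher's Theorem~1.2, is exactly what the paper does. Its proof consists of that citation alone.

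The reconstruction you sketch would not work, and the obstacle is not where you place it.

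Step~(3) needs no synchronisation. Let $c_1$ be a proper countable colouring of the graph $P_{\mathrm{rich}}$ and $c_2$ one of the all-poor subsystem. Then the product colouring $x\mapsto(c_1(x),c_2(x))$ is already proper on $H$. A hyperedge containing a rich pair $xy$ has $c_1(x)\ne c_1(y)$, and every other hyperedge lies in the all-poor subsystem.

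The genuine gap is step~(1): bounded codegree does not yield a countable colouring.
\begin{itemize}
\item In a linear host every shadow pair has at most one witness. So every pair is poor and $P_{\mathrm{rich}}$ is empty.
\item Step~(1) would therefore have to countably colour every linear triple system.
\item That is false. The Erd\H{o}s--Hajnal--Rothschild system behind \Cref{thm:EHR-nonlinear} is linear and uncountably chromatic. So is the paper's own $L_\kappa$ from \Cref{thm:successor-linear}.
\end{itemize}
Your elementarity observation, that a poor pair inside $M_\alpha$ has all its witnesses in $M_\alpha$, holds in every linear host, including those uncountably chromatic ones. So no bookkeeping along the chain can turn it into a countable colouring.

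The rich/poor split therefore disposes only of the easy part. It leaves the whole content of the theorem untouched: that every uncountably chromatic, essentially linear, triple system contains $K_{n,n}^+$. Supplying that requires Reiher's actual argument, which is why citing it, as the paper does, is the right route here.
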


\begin{proof}[Source]
This is the case $k=3$ of Reiher's theorem on obligatory expansions
\cite[Theorem~1.2]{ReiherObligatory}.
\end{proof}

\begin{theorem}[Resolution of Erd\H{o}s Problem~\#593]
\label{thm:classification}
For every finite triple system $F$, the following are equivalent.
\begin{enumerate}[label=\textup{(\roman*)}]
\item $F$ is obligatory;
\item $F\in\Bclass$;
\item after isolated vertices are removed, $F$ is linear, every
hyperedge-node of $I(F)$ is incident with a bridge, and every Berge cycle
of $F$ has even length.
\end{enumerate}
\end{theorem}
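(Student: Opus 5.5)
The plan is to prove (ii)$\Leftrightarrow$(iii), (ii)$\Rightarrow$(i), and (i)$\Rightarrow$(iii). The first equivalence is \Cref{prop:finite-decomposition} together with \Cref{lem:isolated-reduction}, which lets us pass between $F$ and $F^\circ$ both for membership in $\Bclass$ and for obligatoriness. So throughout we may assume $F$ has no isolated vertices and $E(F)\ne\varnothing$; the edgeless case is trivially obligatory and lies in $\Bclass$.

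\textbf{Positive direction (ii)$\Rightarrow$(i).} We argue by structural induction on $\Bclass$, using \Cref{lem:obligatory-closure}. Edgeless systems are obligatory, since any uncountably chromatic host is infinite. Disjoint unions and one-point amalgamations preserve obligatoriness by \Cref{lem:obligatory-closure}. For a generator $J^+$ with $J$ finite bipartite with parts of sizes at most $n$, $J$ is a subgraph of $K_{n,n}$. Hence $J^+$ is a subhypergraph of $K_{n,n}^+$, which is obligatory by \Cref{thm:Reiher}, and subhypergraphs of obligatory systems are obligatory.

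\textbf{Negative direction (i)$\Rightarrow$(iii).} Suppose $F$ is obligatory. By \Cref{thm:EHR-nonlinear}, $F$ is linear. Fix $s$ with $2s+1\ge |E(F)|$, so that every Berge cycle of $F$ has length at most $2s+1$. By \Cref{thm:EH-odd-girth}, take $A$ with $\chi(A)=\aleph_1$ and no odd cycle of length at most $2s+1$. By \Cref{thm:lift-chromatic}, $\chi(\Lift(A,\aleph_1))=\aleph_1$, so $F\hookrightarrow\Lift(A,\aleph_1)$. The bridge-trace theorem \Cref{thm:bridge-trace} (necessity) then supplies a bridge selector $p$, which gives the bridge condition at every hyperedge-node, together with embeddings of every derivative $D_C(F,p)$ into $A$.

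It remains to rule out odd Berge cycles. An odd Berge cycle of length $m\le |E(F)|\le 2s+1$ would, by \Cref{lem:cycle-selector}(iv), yield an ordinary $C_m$ in some derivative $D_C(F,p)$. That derivative embeds in $A$, so $A$ would contain an odd cycle of length at most $2s+1$, a contradiction. Hence every Berge cycle of $F$ is even, which gives (iii).

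The only delicate point is the length bound. A Berge cycle uses distinct hyperedges, so $m\le|E(F)|$. This means a single finite odd-girth parameter suffices, and no single graph needs to avoid all odd cycles (which would make $A$ bipartite).
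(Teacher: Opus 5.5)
Your proof is correct and follows the paper's argument. It uses the same reduction to $F^\circ$, the same positive direction via \Cref{thm:Reiher} and \Cref{lem:obligatory-closure}, and the same negative-direction ingredients: \Cref{thm:EHR-nonlinear}, \Cref{thm:lift-chromatic}, the necessity half of \Cref{thm:bridge-trace}, and \Cref{lem:cycle-selector}.

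The only difference is organizational. The paper argues contrapositively and uses a separate host for each linear failure mode: $\Lift(K_{\aleph_1},\aleph_1)$ when there is no bridge selector, and $\Lift(A,\aleph_1)$ with $A$ merely $C_m$-free when there is an odd Berge cycle of length $m$. Your single host $\Lift(A,\aleph_1)$, with odd girth exceeding $|E(F)|$, handles both failures at once. This is valid because the necessity half of the bridge-trace theorem produces a bridge selector for any graph $A$.
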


\begin{proof}
Let $F^\circ$ be obtained from $F$ by deleting its isolated vertices.
By \Cref{lem:isolated-reduction}, all three assertions are unchanged when
$F$ is replaced by $F^\circ$.  We therefore assume that $F$ has no
isolated vertices.  If $E(F)=\varnothing$, then necessarily
$V(F)=\varnothing$; the empty system is a generator of $\Bclass$, embeds
in every host, and satisfies the intrinsic condition vacuously.  Hence
we may also assume $E(F)\ne\varnothing$.

The equivalence of \textup{(ii)} and \textup{(iii)} is
\Cref{prop:finite-decomposition}.

Assume \textup{(ii)}.  If $J$ is finite and bipartite, then
$J\hookrightarrow K_{n,n}$ for some $n$, so
$J^+\hookrightarrow K_{n,n}^+$.  By \Cref{thm:Reiher} and closure under
subhypergraphs, $J^+$ is obligatory.  Now
\Cref{lem:obligatory-closure} implies that every member of $\Bclass$ is
obligatory.  Thus \textup{(ii)} implies \textup{(i)}.

It remains to show that failure of \textup{(iii)} implies
non-obligatoriness.

If $F$ is nonlinear, two of its edges meet in at least two vertices, and
\Cref{thm:EHR-nonlinear} says directly that $F$ is non-obligatory.

Assume $F$ is linear but some hyperedge-node of $I(F)$ has no incident
bridge.  Then $F$ has no bridge selector.  By
\Cref{thm:lift-chromatic}, $\Lift(K_{\aleph_1},\aleph_1)$ has chromatic
number $\aleph_1$, while \Cref{thm:bridge-trace} shows that it omits $F$.

Finally, suppose $F$ is linear, every hyperedge-node has an incident
bridge, and $F$ has an odd Berge cycle of length $m\ge3$.  Choose, by
\Cref{thm:EH-odd-girth}, an exact-$\aleph_1$ graph $A$ with no $C_m$.
Then $\Lift(A,\aleph_1)$ has chromatic number $\aleph_1$.  If $F$
embedded in it, \Cref{thm:bridge-trace} would give a bridge selector all
of whose derivatives embed in $A$, whereas \Cref{lem:cycle-selector}
forces one derivative to contain an actual $C_m$.  This contradiction
shows that the lift omits $F$.

Every failure of \textup{(iii)} has produced an uncountably chromatic
$F$-free host, so \textup{(i)} implies \textup{(iii)}.
\end{proof}

\begin{corollary}[Obstruction trichotomy]
\label{cor:obstruction-trichotomy}
Let $F$ be a finite triple system and let $F^\circ$ be obtained by deleting
its isolated vertices.  Then $F\notin\Bclass$ if and only if exactly one of
the following sequentially exclusive alternatives holds:
\begin{enumerate}[label=\textup{(\roman*)}]
\item $F^\circ$ is nonlinear;
\item $F^\circ$ is linear and some hyperedge-node of $I(F^\circ)$ has no
incident bridge, equivalently $F^\circ$ has no bridge selector;
\item $F^\circ$ is linear, every hyperedge-node of $I(F^\circ)$ has an
incident bridge, and $F^\circ$ has an odd Berge cycle.
\end{enumerate}
\end{corollary}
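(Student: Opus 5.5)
This is essentially a logical repackaging of \Cref{prop:finite-decomposition}, combined with \Cref{lem:isolated-reduction}.

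First, by \Cref{lem:isolated-reduction}, $F\in\Bclass$ if and only if $F^\circ\in\Bclass$. By \Cref{prop:finite-decomposition} applied to $F^\circ$, which has no isolated vertices, this holds if and only if $F^\circ$ is linear, every hyperedge-node of $I(F^\circ)$ has an incident bridge, and every Berge cycle of $F^\circ$ is even. Call these conditions (L), (B) and (E), so that $F\in\Bclass$ exactly when (L), (B) and (E) all hold.

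The three alternatives are precisely the first failure in the ordered list (L), (B), (E):
\begin{itemize}
\item alternative (i) is $\neg$(L);
\item alternative (ii) is (L) together with $\neg$(B);
\item alternative (iii) is (L), (B) and $\neg$(E).
\end{itemize}
These are pairwise exclusive by construction: (ii) and (iii) both assert (L), contradicting (i), and (iii) asserts (B), contradicting (ii). Their disjunction is $\neg$(L)$\vee\neg$(B)$\vee\neg$(E), which is exactly $F\notin\Bclass$. So $F\notin\Bclass$ if and only if at least one alternative holds, and that is the same as exactly one holding.

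It remains to justify the parenthetical equivalence in (ii). For linear $F^\circ$, a bridge selector is by definition a choice at each hyperedge-node $e$ of a point $p(e)\in e$ with $ep(e)$ a bridge of $I(F^\circ)$. Such a choice exists if and only if every hyperedge-node has an incident bridge, since every Levi edge at $e$ joins $e$ to a point of $e$. Finiteness makes the choice trivial.

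There is no real obstacle. The only care needed is with the empty reduced system, which has no edges: all three conditions hold vacuously and it is a generator of $\Bclass$, consistent with the equivalence.
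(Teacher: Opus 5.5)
Your proposal is correct and follows the paper's proof: reduce to $F^\circ$, invoke \Cref{prop:finite-decomposition}, negate the conjunction of linearity, bridge incidence at every hyperedge-node and evenness of Berge cycles in the displayed order, and get exclusivity because each later alternative contains the negations of the earlier ones. Your explicit check that the existence of a bridge selector is equivalent to every hyperedge-node having an incident bridge, which the paper leaves implicit, is a harmless addition.
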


\begin{proof}
By \Cref{prop:finite-decomposition}, membership in $\Bclass$ is equivalent
to the conjunction of linearity, the existence of an incident bridge at
every hyperedge-node, and evenness of every Berge cycle.  Negating this
conjunction in the displayed order gives the three alternatives.  They
are mutually exclusive because each later alternative includes the
negations of all earlier ones.
\end{proof}

A triple system is \emph{strongly tripartite} if its vertex set can be
partitioned into three classes and every edge meets each class exactly
once.  A finite triple-system \emph{forest} is a system whose nonisolated
edges can be ordered $e_0,\ldots,e_{t-1}$ so that
\[
  \left|e_i\cap\bigcup_{j<i}e_j\right|\le1
  \qquad(0<i<t).
\]

\begin{corollary}[Compatibility with the known theory]
\label{cor:compatibility}
The classification has the following consequences.
\begin{enumerate}
\item Every obligatory finite triple system is strongly tripartite.
\item Every finite triple-system forest is obligatory.
\item For every $n\ge3$, the private-vertex cycle expansion $C_n^+$ is
obligatory if and only if $n$ is even.
\item The loose cycle $C_7^+=C_7^{(3)}$ is linearly obligatory but is not
obligatory.
\end{enumerate}
\end{corollary}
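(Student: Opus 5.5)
We derive each of the four items from \Cref{thm:classification} together with the structural description of $\Bclass$; none of them needs new infinitary input.

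For \textup{(1)}, the plan is to show that every member of $\Bclass$ is strongly tripartite, by structural induction on $\Bclass$. For a generator $J^+$ with $J$ bipartite with sides $X,Y$, take the classes $X$, $Y$ and the set of private vertices; every expanded edge $\{x,y,p_{xy}\}$ meets each class once. An edgeless system is trivially strongly tripartite. Disjoint unions are handled by taking unions of the corresponding classes. For a one-point amalgamation at $v_0\sim v_1$, permute the three class labels of $F_1$ so that $v_1$ receives the same label as $v_0$, and then take unions. Since no edge meets both sides except through the identified point, every edge still meets each class exactly once.

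For \textup{(2)}, we verify condition \textup{(iii)} of \Cref{thm:classification} for a forest, or alternatively show directly that a forest lies in $\Bclass$. The direct route is an induction along the ordering $e_0,\dots,e_{t-1}$. A single edge $e=\{x,y,z\}$ is $K_2^+$, which is a generator. Suppose the union of $e_0,\dots,e_{i-1}$ lies in $\Bclass$. If $e_i$ meets that union in at most one point, then adding $e_i$ is a disjoint union or a one-point amalgamation with $K_2^+$, and the result stays in $\Bclass$. The point to watch is that the amalgamation must be with a fresh copy of $K_2^+$ whose other two vertices are new. This holds because $|e_i\cap\bigcup_{j<i}e_j|\le1$. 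Finally, isolated vertices are absorbed by \Cref{lem:isolated-reduction}.

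For \textup{(3)}, if $n$ is even then $C_n$ is bipartite, so $C_n^+$ is a generator and hence obligatory. If $n$ is odd, then $v_0,e_0,\dots,v_{n-1},e_{n-1},v_0$, taken along the core cycle, is a Berge cycle of odd length $n$, so condition \textup{(iii)} fails.

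For \textup{(4)}, linear obligatoriness is exactly (E5) with $n=7$. Non-obligatoriness follows from \textup{(3)}, once we note that $C_7^+$ coincides with the loose cycle $C_7^{(3)}$: consecutive edges share one core vertex, and each edge has one private vertex.

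The only step needing care is the label permutation in \textup{(1)}, which ensures that the amalgamated vertex gets a consistent class. The remaining items are bookkeeping.
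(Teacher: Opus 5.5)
Your proposal is correct and follows the paper's proof essentially step for step. In \textup{(1)} you use the same tripartition ($X$, $Y$, private vertices) and the same class-relabelling at one-point amalgamations; in \textup{(2)} you build the forest edge by edge from copies of $K_2^+$; in \textup{(3)} you split into even $n$ (a generator) and odd $n$ (an odd Berge cycle violating condition \textup{(iii)}); and in \textup{(4)} you combine \textup{(E5)} at $n=7$ with \textup{(3)}.
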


\begin{proof}
For a bipartite graph $J$ with bipartition $X\mathbin{\dot\cup}Y$, the
three classes $X$, $Y$, and the set of private expansion vertices form a
strong tripartition of $J^+$.  Strong tripartiteness is preserved under
disjoint union.  It is also preserved under one-point amalgamation:
permute the three class names in one factor so that the two points to be
identified lie in corresponding classes, and then unite corresponding
classes.  Edgeless systems cause no restriction.  Thus every member of
$\Bclass$, and hence every obligatory system by
\Cref{thm:classification}, is strongly tripartite.  This recovers
Komj\'ath's necessary condition \cite[pp.~233--238]{Komjath2001}.

A single triple is $K_2^+$.  In the defining edge order of a forest, a
new edge is either disjoint from the previous edge union or meets it in
one point.  After isolated vertices are set aside, the forest is
therefore built from single triples by successive disjoint unions and
one-point amalgamations.  It lies in $\Bclass$ and is obligatory.

If $n$ is even, $C_n$ is bipartite, so $C_n^+$ is one of the generators
of $\Bclass$.  If $n$ is odd, its private incidences are bridges and its
core vertices and hyperedge-nodes form an odd Berge cycle.  It therefore
fails the intrinsic criterion in \Cref{thm:classification} and is not
obligatory.  Here a finite triple system is \emph{linearly obligatory} if it occurs
in every linear triple system of uncountable chromatic number.  Hajnal and
Komj\'ath prove that $C_n^{(3)}$ is linearly obligatory for
$n\notin\{2,3,5\}$ \cite{HK2008}; the precise statement, with this
terminology and the same loose-cycle convention, is recorded in
\cite[\S3.7, p.~43]{ReiherGirth}.  Taking $n=7$ and combining that result
with the preceding paragraph proves the last assertion.
\end{proof}

\begin{remark}[Recognition]
\label{rem:recognition}
Work in the incidence-list model and suppose linearity has already been
certified.  The criterion in \Cref{thm:classification} is recognisable in
\[
 O\bigl(|V(I(F))|+|E(I(F))|\bigr)
\]
time.  Compute all bridges of the Levi graph, verify that every
hyperedge-node has one, select one bridge at each hyperedge-node, delete
them, find the remaining components, build the graph derivatives, and
test every derivative for bipartiteness.  By
\Cref{lem:cycle-selector}, the derivative bipartiteness tests are exactly
equivalent to the even-Berge-cycle condition.  This claim deliberately
begins after linearity is supplied; testing repeated vertex pairs is a
separate, representation-dependent step.
\end{remark}

\part{Exact calibration and avoidance spectra}

\section{Exact linear calibration}
\label{sec:linear}

The classification in \Cref{thm:classification} uses only the classical
nonlinearity obstruction.  To obtain exact avoidance spectra, and hence
Problem~\#1177, we need the stronger fact that linear triple systems
exist at every prescribed uncountable chromatic cardinal.  This section
is logically independent of the bridge classification.

\subsection{The simultaneous edge-labelling input}

\begin{definition}
Let $S$ be a graph, let $I$ be a set, and let $\kappa$ be a cardinal.
An edge labelling $\ell:E(S)\to I$ has property $P(S,I,\kappa)$ if, for
every colouring $c:V(S)\to\theta$ with $\theta<\kappa$, there is a
colour $a<\theta$ such that, for every $i\in I$, the colour class
$c^{-1}\{a\}$ contains an edge $e$ with $\ell(e)=i$.
\end{definition}

\begin{lemma}[Monotonicity]
\label{lem:P-monotone}
If $P(S,I,\delta)$ holds and $\kappa\le\delta$, then $P(S,I,\kappa)$
holds.
\end{lemma}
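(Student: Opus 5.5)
The plan is to unwind the definition of $P$ and observe that lowering the threshold $\kappa$ only shrinks the family of colourings that must be checked. Assume $P(S,I,\delta)$ holds and $\kappa\le\delta$. To verify $P(S,I,\kappa)$, fix an arbitrary colouring $c:V(S)\to\theta$ with $\theta<\kappa$.

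Since cardinals are identified with their initial ordinals, $\theta<\kappa\le\delta$ gives $\theta<\delta$. Thus $c$ is one of the colourings quantified over in $P(S,I,\delta)$. That property supplies a colour $a<\theta$ such that for every $i\in I$ the class $c^{-1}\{a\}$ contains an edge $e$ with $\ell(e)=i$. This is exactly the conclusion required by $P(S,I,\kappa)$ for $c$. The labelling $\ell$ and the label set $I$ are the same in both properties, so nothing else has to be checked.

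The only point needing any care is that the colour bound $\theta$ and the witness colour $a<\theta$ are unchanged when we pass between the two properties. In particular, no cofinality or regularity assumption on $\kappa$ or $\delta$ is involved, and the argument goes through verbatim when $\kappa\le 1$, where the hypothesis on $c$ is vacuous or trivial. I do not expect any genuine obstacle: the statement is a direct consequence of the quantifier structure $\forall\theta<\kappa$.
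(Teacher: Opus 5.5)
Your proposal is correct and matches the paper's proof: any colouring with $\theta<\kappa\le\delta$ colours is one of the colourings covered by $P(S,I,\delta)$, so the same witness colour $a$ works. Your side remark about the cases $\kappa\le1$ is unnecessary, since the main argument already handles every case uniformly.
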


\begin{proof}
Every colouring with fewer than $\kappa$ colours is also a colouring with
fewer than $\delta$ colours.
\end{proof}

\begin{theorem}[Erd\H{o}s--Galvin--Hajnal]
\label{thm:EGH-P}
Let $\rho$ be an infinite cardinal and let
\[
 \delta(\rho)=\min\{\delta:\rho^\delta>\rho\}.
\]
The generalized Specker graph $S=GS_2(\rho)$ has $|V(S)|=\rho$ and
there is a map
\[
  f:E(S)\longrightarrow\rho
\]
with the following simultaneous-labelling property: whenever
$\theta<\delta(\rho)$ and $c:V(S)\to\theta$, there is one colour
$a<\theta$ such that, for every $\xi<\rho$, some edge $xy\in E(S)$
satisfies
\[
  f(xy)=\xi
  \qquad\text{and}\qquad
  c(x)=c(y)=a.
\]
Equivalently, if $I$ is any set of cardinality $\rho$, then composing
$f$ with a bijection $\rho\to I$ gives an $I$-valued edge labelling with
property $P(S,I,\delta(\rho))$.
\end{theorem}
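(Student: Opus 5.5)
This statement is imported rather than proved from scratch here: it is the external input (E3), taken from Erd\H{o}s--Galvin--Hajnal.  The plan is therefore to recall the construction of $GS_2(\rho)$ and to match the source's partition property with the common-colour quantifier stated above, following the interface of \Cref{app:external-interface}.

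\emph{The graph.}  The vertices of $GS_2(\rho)$ are the increasing pairs $(\alpha,\beta)$ with $\alpha<\beta<\rho$, suitably restricted, so that $|V(S)|=\rho$ is immediate.  Adjacency is the Specker rule: $(\alpha,\beta)$ is joined to $(\beta,\gamma)$ when $\alpha<\beta<\gamma$, possibly with the extra side conditions of the generalized version.

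\emph{The labelling.}  Set $f(\{(\alpha,\beta),(\beta,\gamma)\})=\beta$, the shared middle coordinate, composed with a bijection to $\rho$ if necessary.  The required property then says the following.  For every colouring $c$ with $\theta<\delta(\rho)$ colours, a single colour class $c^{-1}\{a\}$ contains, for cofinally many (or all) middle points $\xi$, a monochromatic pair $(\alpha,\xi),(\xi,\gamma)$.

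\emph{Finding one good colour.}  The key argument is the Erd\H{o}s--Galvin--Hajnal counting step, which uses $\rho^\theta=\rho$ for $\theta<\delta(\rho)$.
\begin{enumerate}
\item For each $\xi$, the restricted colourings $\alpha\mapsto c(\alpha,\xi)$ and $\gamma\mapsto c(\xi,\gamma)$ are summarized by their sets of attained colours, or by finer $\theta$-indexed data.
\item Since there are only $\rho^\theta=\rho$ such summaries, and since $\rho$ is used as an index set, a pressing-down or pigeonhole argument finds one colour $a$ that is simultaneously ``realized on both sides'' at every $\xi$.
\item If it is only cofinally many $\xi$, the labelling is reindexed so that each label class is a cofinal set.
\end{enumerate}

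The main obstacle is exactly this uniformity: one colour must serve for all $\rho$ labels simultaneously, not merely a different colour for each label.  This is where the hypothesis $\theta<\delta(\rho)$ enters, and where the source's partition relation (property $P$) must be quoted precisely rather than the weaker statement that $\chi(GS_2(\rho))>\theta$.

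\emph{The equivalent formulation.}  The final sentence of the statement is a trivial relabelling.  If $g:\rho\to I$ is a bijection, then $\ell=g\circ f$ satisfies $P(S,I,\delta(\rho))$ verbatim, because the quantifier ``for every $\xi<\rho$'' transfers along $g$ to ``for every $i\in I$''.
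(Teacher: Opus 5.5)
Your framing agrees with the paper. The statement is the imported input (E3), and the paper's proof is nothing but a citation. EGH Definition~6.2 (p.~448) defines $P$ with the common-colour quantifier order $\forall c\,\exists a\,\forall i\,\exists e$. Their Corollary~9.7 (p.~461) gives $P(GS_n(\rho),\rho,\delta(\rho))$ for $1<n<\omega$, and one takes $n=2$. The $I$-valued form is then the bijection transport, which your last paragraph handles correctly. The trouble is the reconstruction in between: it does not sketch the EGH theorem, and the statement it does sketch is false.

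The graph you describe, $(\alpha,\beta)\sim(\beta,\gamma)$ for $\alpha<\beta<\gamma<\rho$, is the Erd\H{o}s--Hajnal shift graph on $[\rho]^2$. It and all its subgraphs, hence any ``suitable restriction'', are $\lambda$-colourable whenever $\rho\le 2^\lambda$ with $\lambda$ infinite. To see this, choose distinct $A_\alpha\subseteq\lambda$ for $\alpha<\rho$ and put $\Delta(\alpha,\beta)=\min(A_\alpha\triangle A_\beta)$. Colour $\{\alpha<\beta\}$ by $\Delta(\alpha,\beta)$ together with the bit recording whether $\Delta(\alpha,\beta)\in A_\beta$. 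Suppose $\{\alpha<\beta\}$ and $\{\beta<\gamma\}$ had the same colour. Then $\zeta=\Delta(\alpha,\beta)=\Delta(\beta,\gamma)$, and equal bits give $\zeta\in A_\beta\iff\zeta\in A_\gamma$, contradicting $\zeta\in A_\beta\triangle A_\gamma$.

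In precisely the case the paper uses, $\rho=2^\mu$, this is a proper colouring with $\theta=\mu$ colours. Moreover $\mu<\mu^+\le\delta(\rho)$, because $\rho^\mu=\rho$. A proper colouring has no monochromatic edge, so no colour class contains an edge of even one label. Hence $P(S,I,\delta(\rho))$ fails for every labelling of your graph.

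The literal middle-coordinate labelling fails even with two colours. For fixed $\xi$, colour $\{\alpha<\beta\}$ by $0$ if $\beta\le\xi$ and by $1$ otherwise; then no edge of label $\xi$ is monochromatic. This is why you had to hedge with ``cofinally many'' and reindexing. But no reindexing can rescue a graph that admits a proper $\mu$-colouring.

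So the counting step, with its unspecified ``summaries'' and ``pressing-down or pigeonhole'', cannot be completed. You correctly name the crux: one colour $a$ must serve all $\rho$ labels at once. That uniformity is exactly the nontrivial content of the cited Corollary~9.7, and it depends on EGH's actual definition of $GS_2(\rho)$, which your sketch does not capture.

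Drop the reconstruction and give the precise citation instead. Note that it is $P$, not $P^*$, that is quoted. Note also that $|V(GS_2(\rho))|=\rho$ comes with the source's definition of the graph, not with the graph you described.
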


\begin{proof}[Source]
Definition~6.2 on p.~448 of Erd\H{o}s--Galvin--Hajnal \cite{EGH}
defines $P$ by exactly the displayed common-colour-class quantifier.
Their Corollary~9.7 on p.~461 supplies the assertion for the generalized
Specker graph; here we take its case $n=2$.  We use property $P$, not the
stronger property $P^*$.
\end{proof}

Since $S$ is a simple graph on the infinite cardinal $\rho$, it has at
most $\rho$ edges.

\subsection{Cardinal arithmetic}

\begin{lemma}[Small unions]
\label{lem:small-union}
Let $\rho$ be an infinite cardinal, let $\mu<\cf(\rho)$, and let
$(X_i:i\in I)$ satisfy $|I|\le\mu$ and $|X_i|<\rho$ for every $i\in I$.
Then
\[
 \left|\bigcup_{i\in I}X_i\right|<\rho.
\]
\end{lemma}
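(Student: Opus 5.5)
The plan is to bound the union by a supremum argument, using the cofinality hypothesis $\mu<\cf(\rho)$ to keep the supremum of the sizes below $\rho$. Put $\lambda_i=|X_i|<\rho$ for $i\in I$. If $I=\varnothing$ the union is empty and there is nothing to prove, so assume $I\neq\varnothing$. Let
\[
  \lambda=\sup_{i\in I}\lambda_i .
\]
Because $|I|\le\mu<\cf(\rho)$, the set $\{\lambda_i:i\in I\}$ has fewer than $\cf(\rho)$ members. By the definition of cofinality, it is therefore bounded below $\rho$, and so $\lambda<\rho$. This is the only place the hypothesis on $\cf(\rho)$ enters.

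Next I would use the standard union estimate, which relies on choice and is legitimate since we work in ZFC:
\[
  \left|\bigcup_{i\in I}X_i\right|\le |I|\cdot\lambda\le\max\{\mu,\lambda,\aleph_0\}.
\]
It remains to check that each of the three terms on the right is below $\rho$. First, $\mu<\cf(\rho)\le\rho$. Second, $\lambda<\rho$ by the previous paragraph. Third, $\aleph_0\le\rho$, and this is the point needing care. If $\rho>\aleph_0$, all three terms are below $\rho$, so the maximum is too and we are done.

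The main obstacle, mild as it is, is the case $\rho=\aleph_0$, where the $\aleph_0$ term is not below $\rho$. Here I would argue directly. Since $\mu<\aleph_0$, the index set $I$ is finite. Each $X_i$ is finite, because $|X_i|<\aleph_0$. A finite union of finite sets is finite, so the union has size $<\aleph_0=\rho$. Since cardinal multiplication of finite numbers stays finite, one can also view this as the same estimate with the $\aleph_0$ term dropped. Combining the two cases gives the lemma.
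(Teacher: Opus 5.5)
Your proposal is correct and follows essentially the same route as the paper: you use $|I|\le\mu<\cf(\rho)$ to get $\sup_i|X_i|<\rho$, then bound the union by $|I|\cdot\sup_i|X_i|$. The only difference is that you treat the case $\rho=\aleph_0$ explicitly, whereas the paper covers it implicitly by noting that a product of two cardinals below an infinite cardinal stays below it.
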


\begin{proof}
The set of cardinals $\{|X_i|:i\in I\}$ has size at most
$\mu<\cf(\rho)$, so
\[
 \sigma=\sup_{i\in I}|X_i|<\rho.
\]
Consequently
\[
 \left|\bigcup_{i\in I}X_i\right|
 \le |I|\cdot\sigma<\rho,
\]
because both factors are strictly below the infinite cardinal $\rho$.
\end{proof}

\begin{lemma}
\label{lem:cardinal-facts}
Let $\mu$ be an infinite cardinal and put
\[
 \rho=2^\mu,\qquad \kappa=\mu^+,\qquad R=\rho^+,\qquad
 \Lambda=2^\rho.
\]
Then:
\begin{enumerate}[label=\textup{(\roman*)}]
\item $\cf(\rho)>\mu$;
\item $\kappa\le\rho$;
\item $\Lambda^\rho=\Lambda$;
\item $R\le\Lambda$;
\item every subset of $R$ of cardinality at most $\rho$ is bounded in
$R$.
\end{enumerate}
\end{lemma}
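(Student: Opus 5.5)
The plan is to verify the five items in order, using only König's theorem, Hausdorff-style exponent arithmetic, and the regularity of successor cardinals.

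For \textup{(i)}, König's theorem gives $\cf(2^\mu)>\mu$ directly: if $\rho=2^\mu$ had cofinality $\le\mu$, then $\rho^{\cf\rho}>\rho$, while $\rho^{\mu}=(2^\mu)^\mu=2^{\mu\cdot\mu}=2^\mu=\rho$. Since $\cf\rho\le\mu$ would force $\rho^{\cf\rho}\le\rho^\mu=\rho$, this is a contradiction.

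For \textup{(ii)}, Cantor's theorem gives $\mu<2^\mu=\rho$. Because $\kappa=\mu^+$ is the least cardinal exceeding $\mu$, it follows that $\kappa\le\rho$.

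For \textup{(iii)}, compute
\[
 \Lambda^\rho=(2^\rho)^\rho=2^{\rho\cdot\rho}=2^\rho=\Lambda ,
\]
using $\rho\cdot\rho=\rho$ for the infinite cardinal $\rho$.

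For \textup{(iv)}, Cantor's theorem again gives $\rho<2^\rho=\Lambda$. Since $R=\rho^+$ is the least cardinal above $\rho$, we get $R\le\Lambda$.

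For \textup{(v)}, the successor cardinal $R=\rho^+$ is regular; this uses choice, which is permitted since the paper works in ZFC. A subset $X\subseteq R$ with $|X|\le\rho<R=\cf(R)$ therefore cannot be cofinal, so $\sup X<R$ and $X$ is bounded. Equivalently, one can apply \Cref{lem:small-union} to the family of initial segments $\xi+1$ for $\xi\in X$: each has size at most $\rho<R$, and there are at most $\rho<\cf R$ of them, so their union has size less than $R$ and is therefore bounded.

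None of these steps is a real obstacle. The only point needing care is \textup{(i)}, where one must invoke König's theorem in its cofinality form rather than merely the strict inequality $\rho>\mu$.
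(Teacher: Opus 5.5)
Your proof is correct and follows the paper's argument item by item: König's theorem for (i), Cantor's theorem for (ii) and (iv), the computation $(2^\rho)^\rho=2^{\rho\cdot\rho}=2^\rho$ for (iii), and regularity of the successor cardinal $R=\rho^+$ for (v). Your derivation of $\cf(2^\mu)>\mu$ from $\rho^{\cf\rho}>\rho$ together with $\rho^\mu=\rho$ only spells out the König step. Your alternative route to (v) via \Cref{lem:small-union} still rests on $\cf R=R>\rho$, so it is not a different method either.
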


\begin{proof}
Part~\textup{(i)} is K\"onig's theorem in the form
$\cf(2^\mu)>\mu$.  Cantor's theorem gives
$2^\mu\ge\mu^+$, proving~\textup{(ii)}.  For~\textup{(iii)},
\[
 (2^\rho)^\rho=2^{\rho\cdot\rho}=2^\rho.
\]
Part~\textup{(iv)} again follows from Cantor's theorem.  Finally,
$R=\rho^+$ is regular, so every subset of cardinality less than $R$ is
bounded.
\end{proof}

\begin{lemma}
\label{lem:cofinal-fibres}
If $\kappa\le\rho$ are infinite cardinals and $R=\rho^+$, then there is
a map $q:R\to\kappa$ every fibre of which is cofinal in $R$.
\end{lemma}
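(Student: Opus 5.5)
We need a map $q:R\to\kappa$ all of whose fibres are cofinal in $R$. The plan is to identify $R$ with $R\times\kappa$, equipped with a suitable ordering, and to let $q$ be projection onto the second coordinate. Since $\kappa\le\rho<R$, we have $|R\times\kappa|=R$. The only work is to choose the bijection so that each fibre is unbounded.

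Rather than handle order types explicitly, I would use ordinal arithmetic. Every ordinal $\alpha<R$ can be written uniquely as $\alpha=\kappa\cdot\beta+\gamma$ with $\gamma<\kappa$. Here $\kappa\cdot\beta$ denotes ordinal multiplication, that is, $\beta$ consecutive blocks of order type $\kappa$. Define
\[
  q(\alpha)=\gamma.
\]
This is just division with remainder by the ordinal $\kappa$, so $q$ is a well-defined map $R\to\kappa$.

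To see that each fibre is cofinal, fix $\gamma<\kappa$ and $\alpha_0<R$. Since $\kappa\le\rho<R$ and $R$ is a cardinal, hence closed under ordinal multiplication by ordinals below it (as $|\kappa\cdot\beta|\le\kappa\cdot|\beta|<R$), the ordinal $\alpha=\kappa\cdot(\alpha_0+1)+\gamma$ lies in $R$. This $\alpha$ satisfies $\alpha\ge\kappa\cdot(\alpha_0+1)\ge\alpha_0+1>\alpha_0$, and it has remainder $q(\alpha)=\gamma$. Hence every fibre meets every final segment of $R$, so every fibre is cofinal.

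The only point needing care is the closure claim that $\kappa\cdot(\alpha_0+1)+\gamma<R$. This follows because the cardinality of that ordinal is at most $\max(\kappa,|\alpha_0|,\aleph_0)\le\rho<R$. No regularity of $R$ is actually needed here, though $R=\rho^+$ is regular anyway.
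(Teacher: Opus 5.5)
Your proof is correct and follows the paper's route: the paper uses the same remainder map, obtained from ordinal division $\alpha=\kappa\beta+\xi$ by $q(\alpha)=\xi$. The only difference is in the cofinality check. The paper notes that each fibre $\{\kappa\beta+\xi:\beta<R\}$ has cardinality $R$ and then invokes regularity of $R$. You instead place $\kappa\cdot(\alpha_0+1)+\gamma>\alpha_0$ in the fibre directly, which, as you note, needs only that $R$ is a cardinal.
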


\begin{proof}
Regard $\kappa$ as its initial ordinal.  By ordinal division, every
$\alpha<R$ is uniquely of the form $\kappa\beta+\xi$ with
$\xi<\kappa$.  Set $q(\alpha)=\xi$.  For fixed $\xi$, the ordinals
$\kappa\beta+\xi$ $(\beta<R)$ all lie below $R$, because their
cardinalities are less than the initial ordinal $R$, and form a strictly
increasing set of cardinality $R$.  Since $R$ is regular, this set is
cofinal in $R$.
\end{proof}

\begin{lemma}
\label{lem:successors-cofinal}
If $\kappa$ is an uncountable limit cardinal, then the uncountable
successor cardinals below $\kappa$ are cofinal in $\kappa$.
\end{lemma}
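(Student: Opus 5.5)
Given $\lambda<\kappa$, it suffices to produce an uncountable successor cardinal $\nu$ with $\lambda<\nu<\kappa$. The plan is to take $\nu=\max(\lambda,\aleph_0)^+$.

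First I would reduce to infinite $\lambda$. If $\lambda$ is finite, replace it by $\aleph_0$; this is legitimate because $\kappa$ is uncountable, so $\aleph_0<\kappa$. So assume $\lambda$ is an infinite cardinal with $\lambda<\kappa$.

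Next, put $\nu=\lambda^+$. It is a successor cardinal, and it is uncountable because $\lambda\ge\aleph_0$ gives $\nu\ge\aleph_1$. It lies strictly above $\lambda$ by definition.

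It remains to check that $\nu<\kappa$. By definition of the cardinal successor, $\lambda<\kappa$ gives $\lambda^+\le\kappa$. Equality is impossible, since $\kappa$ is a limit cardinal and $\lambda^+$ is a successor. Hence $\lambda<\lambda^+<\kappa$.

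Cofinality in $\kappa$ is meant with respect to the cardinal (ordinal) order. For an arbitrary ordinal $\alpha<\kappa$, one applies the same construction to $\lambda=|\alpha|<\kappa$ and obtains $\alpha<\lambda^+$. There is no real obstacle here; the only points needing care are the finite-$\lambda$ case and the observation that a successor cardinal cannot equal a limit cardinal.
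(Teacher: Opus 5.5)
Your proposal is correct and is essentially the paper's argument. Both take $\nu=\max\{\aleph_0,|\alpha|\}$, use $\nu<\kappa$ to get $\nu^+\le\kappa$, exclude $\nu^+=\kappa$ because $\kappa$ is a limit cardinal, and note $\alpha<|\alpha|^+\le\nu^+$. The only difference is cosmetic: you argue first for cardinals $\lambda$ and then pass to ordinals via $|\alpha|$, whereas the paper works with an arbitrary ordinal $\alpha<\kappa$ from the start.
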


\begin{proof}
Let $\alpha<\kappa$ be an ordinal and put
\[
  \nu=\max\{\aleph_0,|\alpha|\}.
\]
Because $\kappa$ is an initial ordinal and $\alpha<\kappa$, we have
$|\alpha|<\kappa$, and hence $\nu<\kappa$.  Since $\kappa$ is a limit
cardinal, it cannot equal the successor cardinal $\nu^+$; therefore
$\nu^+<\kappa$.  Moreover
\[
  \alpha<|\alpha|^+\le\nu^+.
\]
The cardinal $\nu^+$ is uncountable.  Thus every ordinal below $\kappa$
is bounded by an uncountable successor cardinal still below $\kappa$,
which proves cofinality.
\end{proof}

\subsection{The successor construction}

Fix an infinite cardinal $\mu$ and put
\[
 \kappa=\mu^+,
 \qquad
 \rho=2^\mu,
 \qquad
 R=\rho^+,
 \qquad
 \Lambda=2^\rho.
\]
The parameter ladder used throughout the construction is
\begin{equation}
 \mu<\kappa=\mu^+\le\rho=2^\mu<R=\rho^+\le\Lambda=2^\rho,
 \qquad \cf(\rho)>\mu.
 \label{eq:parameter-ladder}
\end{equation}
The cardinal
\[
 \delta(\rho)=\min\{\delta:\rho^\delta>\rho\}
\]
exists, since $\rho^\rho\ge2^\rho>\rho$.  For every nonzero
$\theta\le\mu$,
\[
 \rho^\theta=(2^\mu)^\theta=2^{\mu\theta}=2^\mu=\rho.
\]
Consequently $\delta(\rho)>\mu$ and hence
$\kappa=\mu^+\le\delta(\rho)$.  By
\Cref{thm:EGH-P,lem:P-monotone}, fix a graph
$S=GS_2(\rho)$, a label set $I$ of cardinality $\rho$, and a labelling
\[
 \ell:E(S)\longrightarrow I
\]
with property $P(S,I,\kappa)$.  We have
$|V(S)|=\rho$ and $|E(S)|\le\rho$.  Fix also a map
$q:R\to\kappa$ with every fibre cofinal in $R$, as supplied by
\Cref{lem:cofinal-fibres}.

For every $\alpha<R$, let $V_\alpha$ be a set of cardinality $\Lambda$,
with the sets $V_\alpha$ pairwise disjoint, and write
\[
 V_{<\alpha}=\bigcup_{\beta<\alpha}V_\beta,
 \qquad
 \rk(x)=\alpha\quad(x\in V_\alpha).
\]
An \emph{admissible reservoir at level $\alpha$} is an $I$-indexed
family $\mathbf X=(X_i:i\in I)$ satisfying
\begin{enumerate}[label=\textup{(R\arabic*)}]
\item $X_i\subseteq V_{<\alpha}$ for every $i\in I$;
\item every $X_i$ is empty or has cardinality $\rho$;
\item the nonempty $X_i$ are pairwise disjoint;
\item if $x\in\bigcup_{i\in I}X_i$, then
$q(\rk(x))\ne q(\alpha)$.
\end{enumerate}
Let $\mathcal R_\alpha$ denote the set of all admissible reservoirs at
level $\alpha$.

\begin{lemma}[Stage capacity]
\label{lem:stage-capacity}
For every $\alpha<R$,
\[
 |\mathcal R_\alpha|\le\Lambda.
\]
Moreover, the level $V_\alpha$ has enough points to contain, pairwise
vertex-disjointly, $\rho$ labelled copies of $(S,\ell)$ for every
reservoir in $\mathcal R_\alpha$.
\end{lemma}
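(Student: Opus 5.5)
The plan is a direct cardinality count using the parameter ladder \eqref{eq:parameter-ladder} and \Cref{lem:cardinal-facts}.

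\emph{Bounding the reservoirs.} Every admissible reservoir $\mathbf X\in\mathcal R_\alpha$ is a function from $I$ to the set of subsets of $V_{<\alpha}$ of cardinality at most $\rho$. Conditions (R2)--(R4) only cut this set down, so it suffices to bound the larger family of all such functions.

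First, $|V_{<\alpha}|\le|\alpha|\cdot\Lambda$. Since $\alpha<R=\rho^+$, we have $|\alpha|\le\rho<\Lambda$, and therefore $|V_{<\alpha}|\le\Lambda$. Next, the number of subsets of a set of size at most $\Lambda$ having cardinality at most $\rho$ is at most $\Lambda^\rho=\Lambda$, by \Cref{lem:cardinal-facts}(iii). (Each such subset is the range of a function $\rho\to V_{<\alpha}$, together with the empty set.) Hence the number of $I$-indexed families is at most
\[
  \Lambda^{|I|}=\Lambda^\rho=\Lambda.
\]
The degenerate case $\alpha=0$ is trivial, since the only reservoir is the all-empty family.

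\emph{Fitting the copies into $V_\alpha$.} Each labelled copy of $(S,\ell)$ uses $|V(S)|=\rho$ vertices. The total demand is therefore $|\mathcal R_\alpha|\cdot\rho\cdot\rho\le\Lambda\cdot\rho=\Lambda=|V_\alpha|$. So we can fix an injection
\[
  \mathcal R_\alpha\times\rho\times V(S)\hookrightarrow V_\alpha,
\]
and its fibres give the required pairwise vertex-disjoint copies, indexed by the reservoir and a copy index below $\rho$.

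\emph{Where the care is needed.} The only step needing attention is the exponent calculation. One has to make sure that $|I|=\rho$, not something larger, and that $\Lambda^\rho=\Lambda$ rather than merely $2^\Lambda$. Both follow because $\Lambda=2^\rho$ and $\rho\cdot\rho=\rho$. Everything else is routine cardinal arithmetic in ZFC; no choice beyond well-ordering is needed.
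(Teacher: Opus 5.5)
Your argument is correct and is essentially the paper's proof. You bound $|V_{<\alpha}|\le\Lambda$, count reservoirs as functions from $I$ into a set of at most $\Lambda^\rho=\Lambda$ values, and bound the total vertex demand by $\Lambda\cdot\rho\cdot\rho=\Lambda$. The paper writes $(\Lambda+1)^\rho$ for the empty choice, which you absorb by counting subsets of size at most $\rho$. One wording fix: the copies are the images of the slices $\{(\mathbf X,\eta)\}\times V(S)$ under your injection, not its fibres.
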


\begin{proof}
Since $\alpha<R\le\Lambda$ and every earlier level has cardinality
$\Lambda$,
\[
 |V_{<\alpha}|\le |\alpha|\cdot\Lambda\le R\Lambda=\Lambda.
\]
Therefore
\[
 |[V_{<\alpha}]^\rho|\le\Lambda^\rho=\Lambda
\]
by \Cref{lem:cardinal-facts}.  For each coordinate $i\in I$, there are at
most $\Lambda$ choices for $X_i$, including the empty choice.  Since
$|I|=\rho$,
\[
 |\mathcal R_\alpha|
 \le(\Lambda+1)^\rho
 =\Lambda^\rho
 =\Lambda.
\]
For each pair $(\mathbf X,\eta)$ with
$\mathbf X\in\mathcal R_\alpha$ and $\eta<\rho$, a copy of $S$ uses
$\rho$ vertices.  Hence the total number of required vertices is at most
\[
 |\mathcal R_\alpha|\cdot\rho\cdot|V(S)|
 \le\Lambda\cdot\rho\cdot\rho
 =\Lambda.
\]
Thus all required copies fit pairwise disjointly inside $V_\alpha$.
\end{proof}

\begin{lemma}[Recursive construction]
\label{lem:calibration-construction}
There is a triple system $L_\kappa$ on
$\bigcup_{\alpha<R}V_\alpha$ with the following data fixed by a
transfinite recursion on $\alpha<R$.
For every $\alpha<R$, every $\mathbf X\in\mathcal R_\alpha$, and every
$\eta<\rho$, there is a labelled copy
$B_{\mathbf X,\eta}\subseteq V_\alpha$ of $(S,\ell)$, and the copies
installed at one level are pairwise vertex-disjoint.  For every installed
copy $B=B_{\mathbf X,\eta}$ there is an injection
\[
 \phi_B:
 \{e\in E(B):X_{\ell(e)}\ne\varnothing\}
 \longrightarrow \bigcup_{i\in I}X_i
\]
such that $\phi_B(e)\in X_{\ell(e)}$.  The edges of $L_\kappa$ are
exactly the triples
\[
 e\cup\{\phi_B(e)\}
\]
for installed copies $B$ and edges $e$ in the domain of $\phi_B$.
All reservoirs, copies, transported labels, and maps $\phi_B$ are fixed
independently of any later vertex colouring.
\end{lemma}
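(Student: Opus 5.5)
The construction is a transfinite recursion on $\alpha<R$, installing at level $\alpha$ all the copies demanded by the reservoirs in $\mathcal R_\alpha$. The one point that needs care is that $\mathcal R_\alpha$ depends only on the sets $V_\beta$ ($\beta<\alpha$), on $I$, and on the fixed map $q$, and not on any edges chosen earlier. So the recursion uses only the prefabricated vertex levels, and every choice is made once, before any colouring is considered.

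At stage $\alpha$ I fix an enumeration of $\mathcal R_\alpha\times\rho$. By \Cref{lem:stage-capacity}, this index set has cardinality at most $\Lambda\cdot\rho=\Lambda=|V_\alpha|$. I then partition $V_\alpha$, or a subset of it, into pairwise disjoint blocks $W_{\mathbf X,\eta}$ of cardinality $\rho=|V(S)|$. On each block I choose a bijection from $V(S)$ and transport the edges of $S$ and the labelling $\ell$; the result is $B_{\mathbf X,\eta}$. This gives labelled copies of $(S,\ell)$ inside $V_\alpha$ that are pairwise vertex-disjoint at this level.

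The main step is defining the injection $\phi_B$ for $B=B_{\mathbf X,\eta}$. For each label $i$ with $X_i\ne\varnothing$, let $E_i(B)=\{e\in E(B):\ell(e)=i\}$. Its size is at most $|E(S)|\le\rho=|X_i|$, using (R2). I therefore choose an injection $E_i(B)\to X_i$. Since the nonempty $X_i$ are pairwise disjoint by (R3), the union over $i$ of these injections is an injection from the domain $\{e:X_{\ell(e)}\ne\varnothing\}$ into $\bigcup_iX_i$, and it satisfies $\phi_B(e)\in X_{\ell(e)}$. All these choices use the axiom of choice, through fixed well-orderings of each $X_i$ and of $V_\alpha$, made uniformly along the recursion.

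Finally, I define $E(L_\kappa)$ as the set of triples $e\cup\{\phi_B(e)\}$ over all installed $B$ and all $e$ in the domain of $\phi_B$. These really are triples. The point $\phi_B(e)$ lies in some $X_i\subseteq V_{<\alpha}$ by (R1), while $e\subseteq V_\alpha$, and the levels are disjoint, so $\phi_B(e)\notin e$. The vertex set is $\bigcup_{\alpha<R}V_\alpha$, and the independence clause holds because nothing in the recursion refers to a colouring.

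I expect no serious obstacle. The only delicate points are the cardinality bookkeeping, already supplied by \Cref{lem:stage-capacity}, and observing that injectivity of $\phi_B$ across different labels needs (R3). The chromatic properties of $L_\kappa$, and whether it is linear, are presumably proved in the lemmas that follow, not here.
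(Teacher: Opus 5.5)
Your proposal is correct and matches the paper's proof step for step. At each level you allocate pairwise disjoint labelled copies of $(S,\ell)$ using the stage-capacity count, and you build $\phi_B$ as the union of labelwise injections $E_i(B)\to X_i$, with injectivity across labels coming from (R3); your two extra observations, that $\mathcal R_\alpha$ depends only on the fixed levels and on $q$, and that $\phi_B(e)\notin e$ by (R1), are correct and slightly sharpen what the paper states.
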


\begin{proof}
Proceed by transfinite recursion on $\alpha<R$.  At stage $\alpha$, all
levels below $\alpha$ and all data installed there have already been
fixed.  The family $\mathcal R_\alpha$ is therefore determined solely by
that earlier data.  By \Cref{lem:stage-capacity}, allocate in
$V_\alpha$, for every pair
$(\mathbf X,\eta)\in\mathcal R_\alpha\times\rho$, a copy
$B_{\mathbf X,\eta}$ of $S$, with all these copies pairwise disjoint.
Fix a bijection from $V(S)$ to each copy, transport adjacency along it,
and label every transported edge by the label of its preimage under
$\ell$.  Thus the entire labelled graph, not merely its underlying graph,
is transported.

Fix one installed copy $B=B_{\mathbf X,\eta}$.  Call $i\in I$
\emph{active} when $X_i\ne\varnothing$.  For each active $i$, the set of
edges of $B$ carrying label $i$ has cardinality at most
$|E(B)|\le\rho=|X_i|$, so choose an injection of that edge set into
$X_i$.  Since the active sets $X_i$ are pairwise disjoint, the union of
these labelwise injections is one injection $\phi_B$ with the stated
properties.  Declare $e\cup\{\phi_B(e)\}$ to be a triple whenever
$e\in\dom(\phi_B)$.  This completes stage $\alpha$ and hence the
recursion.

Points of $V_\alpha$ not allocated to an installed copy are not used as
base vertices at level $\alpha$; they may subsequently occur as apex
vertices of edges based at higher levels.  Every choice just described is
part of the recursion and is made before any colouring is considered.
\end{proof}

\begin{lemma}[Linearity]
\label{lem:calibration-linearity}
The triple system $L_\kappa$ supplied by
\Cref{lem:calibration-construction} is linear.
\end{lemma}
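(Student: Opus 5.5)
We must show that any two distinct edges of $L_\kappa$ meet in at most one vertex. Every edge has the form $e\cup\{\phi_B(e)\}$, where $B=B_{\mathbf X,\eta}$ is a copy installed at some level $\alpha$ and $e\in E(B)\subseteq[V_\alpha]^2$. By (R1), the apex $\phi_B(e)$ lies in $V_{<\alpha}$. So each edge has a well-defined \emph{base level} $\alpha$: its two base vertices have rank $\alpha$ and its apex has strictly smaller rank. Indeed, the base is exactly the pair of vertices of maximal rank in the triple. The plan is a case analysis on two distinct edges $E_1=e_1\cup\{a_1\}$ and $E_2=e_2\cup\{a_2\}$, built from copies $B_1,B_2$ at base levels $\alpha_1\le\alpha_2$.

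\emph{Case $\alpha_1<\alpha_2$.} All vertices of $E_1$ have rank at most $\alpha_1<\alpha_2$, so $E_1\cap e_2=\varnothing$. Hence $|E_1\cap E_2|\le|\{a_2\}|=1$.

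\emph{Case $\alpha_1=\alpha_2=\alpha$.} The apices have rank $<\alpha$ and the base vertices have rank $\alpha$, so
\[
E_1\cap E_2=(e_1\cap e_2)\cup(\{a_1\}\cap\{a_2\}).
\]
If $B_1\ne B_2$, the copies at one level are vertex-disjoint by \Cref{lem:calibration-construction}, so $e_1\cap e_2=\varnothing$ and the intersection has size at most $1$. If $B_1=B_2=B$, then $e_1\ne e_2$: otherwise the two triples would coincide, since $\phi_B(e)$ is determined by $e$. Two distinct edges of a simple graph share at most one vertex, so $|e_1\cap e_2|\le1$. Moreover, $\phi_B$ is an injection, so $a_1=\phi_B(e_1)\ne\phi_B(e_2)=a_2$. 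The intersection therefore again has size at most $1$.

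The only delicate point is the same-copy case, where both the injectivity of $\phi_B$ across all labels and the strict rank separation are needed. The injectivity across labels is exactly why (R3) demands that the nonempty $X_i$ be pairwise disjoint, and the proof will cite that explicitly. I would also note that each triple genuinely has three distinct vertices, because the apex has lower rank than the base. Hence $L_\kappa$ is a simple triple system, and the case analysis shows it is linear.
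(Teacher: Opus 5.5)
Your proof is correct and follows essentially the same three-case analysis as the paper. The cases are: base levels $\alpha_1<\alpha_2$, where only the higher triple's apex can be shared; the same level but different copies, using vertex-disjointness of the copies; and the same copy, using simplicity of the copy, injectivity of $\phi_B$, and the rank separation between apex and base.
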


\begin{proof}
Consider two distinct triples.  If they are based in the same installed
copy $B\subseteq V_\alpha$, then their two graph bases are distinct edges
of a simple graph and hence share at most one base vertex.  Their apexes
are distinct because $\phi_B$ is injective, and an apex lies below level
$\alpha$, so it cannot equal any base vertex at level $\alpha$.  Thus the
triples meet in at most one vertex.

If the triples are based in different copies at the same level, their
bases are disjoint.  Each has only one apex, so they can meet only when
those apexes coincide, again in at most one vertex.

Finally, suppose their base levels are $\alpha<\beta$.  Every vertex of
the first triple has rank below $\beta$, whereas the second triple has
exactly one vertex below $\beta$, namely its apex.  Hence their
intersection is contained in that one apex.  These cases prove
linearity.
\end{proof}

\begin{lemma}[Canonical upper colouring]
\label{lem:calibration-upper}
The map
\[
 x\longmapsto q(\rk(x))
\]
is a proper $\kappa$-colouring of $L_\kappa$.  Consequently
$\chi(L_\kappa)\le\kappa$.
\end{lemma}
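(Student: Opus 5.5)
We check directly that no edge of $L_\kappa$ is monochromatic under the colouring $x\mapsto q(\rk(x))$. Since $q$ maps into $\kappa$, this colouring uses at most $\kappa$ colours, so properness immediately gives $\chi(L_\kappa)\le\kappa$.

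By \Cref{lem:calibration-construction}, every edge of $L_\kappa$ has the form $e\cup\{\phi_B(e)\}$. Here $B=B_{\mathbf X,\eta}$ is a copy installed at some level $\alpha<R$, $e$ is an edge of $B$ in the domain of $\phi_B$, and $\mathbf X\in\mathcal R_\alpha$. The two base vertices lie in $B\subseteq V_\alpha$, so both have rank $\alpha$. Hence both receive the colour $q(\alpha)$.

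The apex $\phi_B(e)$ lies in $X_{\ell(e)}\subseteq\bigcup_{i\in I}X_i$. Admissibility condition (R4) of the reservoir $\mathbf X$ at level $\alpha$ states that every such point $x$ satisfies $q(\rk(x))\ne q(\alpha)$. So the apex's colour differs from the common colour of the base, and the edge is not monochromatic.

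The only point needing care is that the reservoir attached to $B$ was admissible at the same level $\alpha$ at which $B$ was installed. This holds by construction, since copies at level $\alpha$ are indexed only by members of $\mathcal R_\alpha$. No real obstacle arises: the whole content is condition (R4), and this lemma is simply the reason that condition was imposed.
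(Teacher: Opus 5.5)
Your proof is correct and follows essentially the same argument as the paper: the two base vertices of an edge based at level $\alpha$ both receive colour $q(\alpha)$, and condition (R4) for the reservoir attached to that level makes the apex's colour different. Your extra remarks, that $q$ takes values in $\kappa$ and that copies at level $\alpha$ are indexed only by members of $\mathcal R_\alpha$, just spell out details the paper leaves implicit.
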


\begin{proof}
Let a triple be based at level $\alpha$.  Its two base vertices have
colour $q(\alpha)$.  Its apex belongs to an admissible reservoir at level
$\alpha$, so condition~\textup{(R4)} gives it a colour different from
$q(\alpha)$.  The triple is therefore not monochromatic.
\end{proof}

\begin{lemma}[Reservoir capture]
\label{lem:reservoir-capture}
Let $c:V(L_\kappa)\to\theta$ with $\theta<\kappa$.  Since
$V(L_\kappa)\ne\varnothing$, necessarily $\theta>0$.  Then there exist:
\begin{enumerate}[label=\textup{(\roman*)}]
\item a value $\xi<\kappa$;
\item an admissible reservoir $\mathbf X=(X_i:i\in I)$ at some level
$\alpha\in q^{-1}\{\xi\}$;
\item an injection $a\mapsto i_a$ from the set of large colour classes
$C_a=c^{-1}\{a\}$, where ``large'' means $|C_a|\ge\rho$, into $I$;
\item an installed copy $B=B_{\mathbf X,\eta}\subseteq V_\alpha$
for which every colour occurring on $B$ is large;
\end{enumerate}
such that $X_{i_a}\subseteq C_a$ for every large colour $a$.
The proof uses, explicitly,
\[
 |D|<\rho,
 \qquad
 \left|\{\rk(x):x\in\bigcup_iX_i\}\right|\le\rho<R,
 \qquad
 q^{-1}\{\xi\}\text{ is cofinal in }R,
\]
where $D$ is the union of the nonlarge colour classes.
\end{lemma}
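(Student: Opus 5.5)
We build the reservoir from large colour classes and then exploit the cofinality of a fibre of $q$ to place the level above every rank it uses.

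\textbf{Small part.} Let $D$ be the union of the colour classes $C_a$ with $|C_a|<\rho$. There are $\theta<\kappa=\mu^+$ colours, so at most $\mu$ of them, and $\mu<\cf(\rho)$ by \Cref{lem:cardinal-facts}(i). Hence \Cref{lem:small-union} gives $|D|<\rho$.

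\textbf{Injection of large colours.} The number of large colours is at most $\theta\le\mu<\rho=|I|$. So we may fix an injection $a\mapsto i_a$ of the large colours into $I$.

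\textbf{Choice of $\xi$ and of the sets $X_{i_a}$.} Every level $V_\beta$ has cardinality $\Lambda$, and the union of all $V_\beta$ with $q(\beta)=\xi$ has cardinality $\Lambda\cdot R$.
\begin{itemize}
\item Fix any value $\xi<\kappa$, say $\xi=q(0)$.
\item For each large $a$, the set $C_a\setminus\bigcup_{q(\beta)=\xi}V_\beta$ may be too small. To avoid this, split off a second value $\xi'\ne\xi$ (possible since $\kappa$ is infinite) and use $\xi$ only if needed.
\end{itemize}
A cleaner route is a counting argument on colours. For each large $a$ there is at most one $\xi$ with $|C_a\setminus q^{-1}$-part$|<\rho$ for the complementary part. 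More precisely, if $C_a\cap\{x:q(\rk x)\ne\xi\}$ has size $<\rho$ for two distinct values $\xi_1,\xi_2$, then $|C_a|<\rho$, since the union of the two complements covers $C_a$. So each large colour "forbids" at most one $\xi$. There are at most $\mu<\kappa$ large colours, so some $\xi<\kappa$ is forbidden by none of them. For that $\xi$, each large $a$ gives a set $Y_a=\{x\in C_a: q(\rk x)\ne\xi\}$ of size $\ge\rho$. Choose $X_{i_a}\subseteq Y_a$ of cardinality exactly $\rho$, and set $X_i=\varnothing$ for the other $i$. These sets are pairwise disjoint because the colour classes are disjoint. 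This gives (R2)–(R4) once a level is chosen.

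\textbf{Choice of the level and of the copy.} The rank set $\{\rk(x):x\in\bigcup_iX_i\}$ has cardinality at most $\mu\cdot\rho=\rho$. By \Cref{lem:cardinal-facts}(v) it is bounded in $R$. Since $q^{-1}\{\xi\}$ is cofinal, pick $\alpha$ in it above all these ranks; then (R1) holds and $\mathbf X\in\mathcal R_\alpha$.

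The $\rho$ copies $B_{\mathbf X,\eta}$, $\eta<\rho$, are pairwise disjoint. Only $|D|<\rho$ of them can meet $D$, so some $B_{\mathbf X,\eta}$ avoids $D$; every colour on it is then large.

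\textbf{Main obstacle.} The delicate point is choosing $\xi$ so that (R4) holds without shrinking the large classes below $\rho$. The forbidden-value counting above handles this, and it is the step I would check most carefully.
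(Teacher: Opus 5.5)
Your proof is correct and follows the paper's argument essentially step for step: the small-union bound $|D|<\rho$, each large colour excluding at most one value $\xi$, sets $X_{i_a}$ of size $\rho$ taken from the part of $C_a$ lying outside the $q$-fibre of $\xi$, the regularity of $R$ together with the cofinality of $q^{-1}\{\xi\}$ to place $\alpha$, and the disjointness of the $\rho$ installed copies to find one avoiding $D$. The only cleanup needed is to delete the two abandoned bullet points under \emph{Choice of $\xi$}, and to write $|\theta|\le\mu$ instead of $\theta\le\mu$, since $\theta$ is merely an ordinal below $\mu^+$.
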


\begin{proof}
Because $\theta<\kappa=\mu^+$ and cardinals are identified with their
initial ordinals, $|\theta|\le\mu$.  Put $C_a=c^{-1}\{a\}$ for
$a<\theta$, call $a$ large when $|C_a|\ge\rho$, and let
\[
 D=\bigcup\{C_a:|C_a|<\rho\}.
\]
There are at most $\mu$ nonlarge colour classes, each of cardinality less
than $\rho$.  Since $\cf(\rho)>\mu$, the small-union lemma gives
\begin{equation}
 |D|<\rho.
 \label{eq:D-small}
\end{equation}
At least one colour is large: otherwise $D=V(L_\kappa)$, contradicting
$|D|<\rho\le|V_0|$.

For $\xi<\kappa$, put
\[
 Q_\xi=\bigcup\{V_\beta:q(\beta)=\xi\}.
\]
For a fixed large colour $a$, at most one $\xi<\kappa$ can satisfy
\begin{equation}
 |C_a\setminus Q_\xi|<\rho.
 \label{eq:exceptional}
\end{equation}
Indeed, if distinct $\xi,\zeta$ both satisfied
\eqref{eq:exceptional}, then
$C_a\cap Q_\xi\subseteq C_a\setminus Q_\zeta$, so both
$C_a\cap Q_\xi$ and $C_a\setminus Q_\xi$ would have cardinality less
than $\rho$.  Their union is $C_a$, contradicting that $a$ is large.

There are at most $\mu$ large colours, and each excludes at most one value
of $\xi$.  Since $\kappa=\mu^+$, choose $\xi<\kappa$ that is not
exceptional for any large colour.  Because the number of large colours is
at most $\mu<\kappa\le\rho=|I|$, choose an injection $a\mapsto i_a$
from the large colours into $I$.  For each large $a$, choose
\[
 X_{i_a}\in[C_a\setminus Q_\xi]^\rho,
\]
and put $X_i=\varnothing$ for all remaining labels.  The nonempty
$X_i$ are pairwise disjoint because they lie in distinct colour classes.
Their union has cardinality exactly $\rho$: it contains one set of size
$\rho$ and is the union of at most $\mu\le\rho$ such sets.

Consequently
\[
 \left|\{\rk(x):x\in\bigcup_iX_i\}\right|\le\rho<R.
\]
The successor cardinal $R=\rho^+$ is regular, so this set of ranks is
bounded in $R$.  Since $q^{-1}\{\xi\}$ is cofinal in $R$, choose
$\alpha\in q^{-1}\{\xi\}$ strictly above every one of these ranks.
Then $\mathbf X=(X_i:i\in I)$ is admissible at level $\alpha$: all its
points lie below $\alpha$, and every active point lies outside $Q_\xi$,
so its $q$-value differs from $q(\alpha)=\xi$.

The construction installs $\rho$ pairwise vertex-disjoint copies
$B_{\mathbf X,\eta}$, $\eta<\rho$, for this reservoir.  Fewer than
$\rho$ of them meet $D$.  Indeed, choosing one point of $D$ from every
meeting copy gives an injection from the family of meeting copies into
$D$, because the copies are pairwise disjoint; this argument does not
assume that $\rho$ is regular.  Choose an installed copy $B$ disjoint
from $D$.  Every colour occurring on $B$ is then large, and by
construction $X_{i_a}\subseteq C_a$ for every large $a$.
\end{proof}

\begin{lemma}[Lower bound]
\label{lem:calibration-lower}
No colouring of $L_\kappa$ with fewer than $\kappa$ colours is proper.
Consequently $\chi(L_\kappa)\ge\kappa$.
\end{lemma}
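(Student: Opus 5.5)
Suppose, towards a contradiction, that $c:V(L_\kappa)\to\theta$ is proper with $\theta<\kappa$. We apply \Cref{lem:reservoir-capture} to $c$. This yields a value $\xi$, an admissible reservoir $\mathbf X$ at a level $\alpha\in q^{-1}\{\xi\}$, an injection $a\mapsto i_a$ from the large colours into $I$ with $X_{i_a}\subseteq C_a$, and an installed copy $B=B_{\mathbf X,\eta}\subseteq V_\alpha$ on which only large colours occur.

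Next we apply property $P(S,I,\kappa)$ to the restriction of $c$ to $B$. Transported along the fixed bijection $V(S)\to B$, this restriction is a colouring of $(S,\ell)$ with $\theta<\kappa$ colours, and the labels of $B$ are exactly the transported labels. Property $P$ therefore gives one colour $a<\theta$ such that, for every $i\in I$, the class $c^{-1}\{a\}\cap B$ contains an edge of $B$ labelled $i$. This colour occurs on $B$, so $a$ is large, and hence $i_a$ is defined and $X_{i_a}\ne\varnothing$.

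Take $i=i_a$ and an edge $e$ of $B$ with $\ell(e)=i_a$ whose two endpoints both have colour $a$. Since $X_{\ell(e)}\ne\varnothing$, the edge $e$ lies in $\dom(\phi_B)$. By \Cref{lem:calibration-construction}, $e\cup\{\phi_B(e)\}$ is a triple of $L_\kappa$ with $\phi_B(e)\in X_{i_a}\subseteq C_a$. This triple is therefore monochromatic in colour $a$, contradicting properness. Hence $\chi(L_\kappa)\ge\kappa$.

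The main point needing care is the interface between $P$ and the installed copy. The colouring must be restricted to $B$, and the labelled structure must be transported so that $P$ applies to $B$ verbatim. One must also check that the common colour produced by $P$ is one of the large colours, which is exactly why the lemma insists on choosing $B$ disjoint from $D$. Everything else is bookkeeping already done in \Cref{lem:reservoir-capture}.
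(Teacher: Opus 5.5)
Your proof is correct and follows the paper's argument step for step: you capture an installed copy $B$ avoiding the small colours via \Cref{lem:reservoir-capture}, apply property $P(S,I,\kappa)$ to the transported labelling on $B$ to obtain a common colour $a$, which must be large, and then the edge of label $i_a$ monochromatic in $a$, together with its apex $\phi_B(e)\in X_{i_a}\subseteq C_a$, is a monochromatic triple. As in the paper, the fact that $X_{i_a}\neq\varnothing$ (so that $e\in\dom(\phi_B)$) comes from the construction inside the capture proof, where $|X_{i_a}|=\rho$, rather than from that lemma's displayed statement.
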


\begin{proof}
Let $c:V(L_\kappa)\to\theta$ with $\theta<\kappa$.  Apply
\Cref{lem:reservoir-capture} and let $B=B_{\mathbf X,\eta}$ be the
captured labelled copy.  The transported labelling on $B$ has property
$P(S,I,\kappa)$.  Therefore there is one colour $a<\theta$ such that, for
every label $i\in I$, the colour class $c^{-1}\{a\}\cap B$ contains an
edge of label $i$.  By the conclusion of
\Cref{lem:reservoir-capture}, every colour occurring on $B$ is large.
Thus $a$ is large and $i_a$ is defined.  Choose a monochromatic
edge $e\subseteq B$ of label $i_a$.  This label is active, so
$e\in\dom(\phi_B)$, and the construction contains the triple
\[
 e\cup\{\phi_B(e)\}.
\]
Its apex lies in $X_{i_a}\subseteq C_a$, while both vertices of $e$ have
colour $a$.  The triple is monochromatic.  Thus $c$ is not proper.
\end{proof}

\begin{theorem}[Exact successor calibration]
\label{thm:successor-linear}
Let $\kappa=\mu^+$, where $\mu$ is infinite.  There is a linear triple
system $L_\kappa$ such that
\[
 \chi(L_\kappa)=\kappa
 \qquad\text{and}\qquad
 |V(L_\kappa)|\le 2^{2^\mu}.
\]
\end{theorem}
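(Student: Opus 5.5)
The plan is to assemble the four preceding lemmas, which already do all the combinatorial work, and then carry out the vertex count. Take $L_\kappa$ to be the triple system produced by \Cref{lem:calibration-construction} for the fixed parameters $\kappa=\mu^+$, $\rho=2^\mu$, $R=\rho^+$, $\Lambda=2^\rho$. Before doing so, check that the construction is legitimate for these parameters. \Cref{thm:EGH-P} together with \Cref{lem:P-monotone} supplies $(S,\ell)$ with property $P(S,I,\kappa)$, because $\kappa\le\delta(\rho)$ follows from $\rho^\theta=\rho$ for all nonzero $\theta\le\mu$. \Cref{lem:cofinal-fibres} supplies $q$, using $\kappa\le\rho$ from \Cref{lem:cardinal-facts}. \Cref{lem:stage-capacity} guarantees that every stage of the recursion has room for its copies.

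The three required properties then come as follows. Linearity is \Cref{lem:calibration-linearity}. The bound $\chi(L_\kappa)\le\kappa$ is the canonical colouring $x\mapsto q(\rk(x))$ of \Cref{lem:calibration-upper}. The bound $\chi(L_\kappa)\ge\kappa$ is \Cref{lem:calibration-lower}, which rests on the reservoir-capture \Cref{lem:reservoir-capture}. Together these give $\chi(L_\kappa)=\kappa$.

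For the size bound, note that $V(L_\kappa)=\bigcup_{\alpha<R}V_\alpha$ with each $|V_\alpha|=\Lambda$. Hence
\[
 |V(L_\kappa)|\le R\cdot\Lambda=\Lambda=2^{2^\mu},
\]
using $R\le\Lambda$ from \Cref{lem:cardinal-facts}(iv).

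The only step needing real care is confirming that the lower-bound argument applies to this particular system. Two points must be checked. First, the reservoir $\mathbf X$ chosen in \Cref{lem:reservoir-capture} for an arbitrary colouring genuinely belongs to $\mathcal R_\alpha$. Second, the recursion therefore installed the copies $B_{\mathbf X,\eta}$ for it in advance. Both hold because $\mathcal R_\alpha$ is defined as the set of all admissible reservoirs at level $\alpha$, not a set depending on any colouring. All the construction choices were fixed before any colouring is considered, so every possible colouring's reservoir was covered. With that confirmed, the theorem follows by combining the cited results.
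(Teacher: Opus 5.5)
Your proposal is correct and matches the paper's proof. You take the system from the recursive construction, cite the linearity lemma, get $\kappa\le\chi(L_\kappa)\le\kappa$ from the canonical colouring $x\mapsto q(\rk(x))$ and the lower-bound lemma, and count $|V(L_\kappa)|\le R\cdot\Lambda=\Lambda=2^{2^\mu}$ using $R\le\Lambda$. Your extra checks (that $\kappa\le\delta(\rho)$, that $q$ exists, and that $\mathcal R_\alpha$ is fixed independently of any colouring) are already established in the setup before the theorem, so they are harmless redundancy rather than new content.
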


\begin{proof}
Take the system constructed in \Cref{lem:calibration-construction}.
It is linear by \Cref{lem:calibration-linearity}; its canonical colouring
and lower-bound lemma give
\[
 \kappa\le\chi(L_\kappa)\le\kappa.
\]
Finally,
\[
 |V(L_\kappa)|=R\Lambda=\Lambda=2^{2^\mu},
\]
because $R\le\Lambda$ and $\Lambda$ is infinite.
\end{proof}

\begin{remark}[Singular-cardinal audit]
\label{rem:singular-audit}
No regularity of $\kappa$ or $\rho=2^\mu$ is used in the successor
construction.  The only regular cardinal used in the lower bound is
$R=\rho^+$, and it is used solely to bound a set of at most $\rho$ ranks.
The estimate $|D|<\rho$ follows instead from
$\cf(\rho)>\mu$.  The selection of an installed copy disjoint from $D$
remains valid when $\rho$ is singular because the installed copies are
pairwise vertex-disjoint, so the copies meeting $D$ inject into $D$.
The lift argument elsewhere in the paper similarly uses no regularity of
its target cardinal.
\end{remark}

\begin{corollary}
\label{cor:all-linear}
For every uncountable cardinal $\kappa$, there is a linear triple system
of chromatic number exactly $\kappa$.
\end{corollary}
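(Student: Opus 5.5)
The plan is to split into the successor case and the limit case.

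If $\kappa=\mu^+$ is a successor cardinal, then $\mu$ is infinite because $\kappa$ is uncountable, and \Cref{thm:successor-linear} already supplies a linear $L_\kappa$ with $\chi(L_\kappa)=\kappa$.

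If $\kappa$ is an uncountable limit cardinal, I would take disjoint unions. By \Cref{lem:successors-cofinal}, the set $\Sigma$ of uncountable successor cardinals $\lambda<\kappa$ is cofinal in $\kappa$. For each $\lambda\in\Sigma$, fix a linear system $L_\lambda$ with $\chi(L_\lambda)=\lambda$, and let $L=\dot\bigcup_{\lambda\in\Sigma}L_\lambda$ be their vertex-disjoint union. Linearity of $L$ is immediate, because two edges from different summands are disjoint.

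For the upper bound, colour each $L_\lambda$ properly with colours from $\lambda\subseteq\kappa$, reusing the same palette $\kappa$ across summands. Every edge lies in a single summand, so this is a proper $\kappa$-colouring and $\chi(L)\le\kappa$.

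For the lower bound, let $c$ be any colouring of $L$ with $\theta<\kappa$ colours. Cofinality of $\Sigma$ gives some $\lambda\in\Sigma$ with $\lambda>\theta$. The restriction of $c$ to $L_\lambda$ uses fewer than $\lambda$ colours, so it has a monochromatic edge. That edge is also an edge of $L$, so $c$ is not proper and $\chi(L)\ge\kappa$. Note that this argument uses no regularity of $\kappa$: only cofinality of the successors below it is needed.

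There is no serious obstacle. The only point to check is that $\chi$ means the least cardinal admitting a proper colouring, so that the bounds "$\le\kappa$" and "no colouring with fewer than $\kappa$ colours is proper" together give exactly $\chi(L)=\kappa$. This holds under the paper's conventions.
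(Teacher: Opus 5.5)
Your proposal is correct and follows the paper's own proof: the successor case comes directly from \Cref{thm:successor-linear}, and the limit case is a disjoint union of the systems $L_\lambda$ over cofinally many uncountable successor cardinals $\lambda<\kappa$ supplied by \Cref{lem:successors-cofinal}. Your explicit upper and lower colour bounds simply spell out the paper's one-line claim that the chromatic number of a disjoint union is the supremum of the chromatic numbers of its summands.
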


\begin{proof}
The successor case is \Cref{thm:successor-linear}.  If $\kappa$ is an
uncountable limit cardinal, use \Cref{lem:successors-cofinal} to choose
uncountable successor cardinals $(\kappa_i)_{i\in J}$ cofinal in
$\kappa$, and take the disjoint union of the corresponding
$L_{\kappa_i}$.  A disjoint union of linear systems is linear, and its
chromatic number is the supremum of the chromatic numbers of its
components:
\[
 \chi\left(\coprod_{i\in J}L_{\kappa_i}\right)
 =\sup_{i\in J}\kappa_i=\kappa.
\]
\end{proof}

\section{Exact spectra and Erd\H{o}s Problem~\#1177}
\label{sec:spectrum}

\begin{corollary}[Exact-spectrum class dichotomy]
\label{thm:spectrum}
For every finite triple system $F$,
\[
\Spec(F)=
\begin{cases}
\varnothing,&F\in\Bclass,\\[2mm]
\{\lambda\in\Card:\lambda>\aleph_0\},&F\notin\Bclass.
\end{cases}
\]
\end{corollary}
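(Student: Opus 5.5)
The plan is to split according to membership in $\Bclass$, using \Cref{thm:classification} for the empty case and \Cref{cor:obstruction-trichotomy} together with exact-cardinal hosts for the full case. By \Cref{lem:isolated-reduction} we may replace $F$ by $F^\circ$ throughout, since $\Spec$ and membership in $\Bclass$ are unchanged.

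\emph{Case $F\in\Bclass$.} By \Cref{thm:classification}, $F$ is obligatory. For every uncountable $\lambda$, any exact-$\lambda$-chromatic triple system is uncountably chromatic and therefore contains $F$. Hence $\Spec(F)=\varnothing$.

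\emph{Case $F\notin\Bclass$.} Fix an uncountable $\lambda$. We must produce an $F$-free triple system $H$ with $\chi(H)=\lambda$. We follow the three alternatives of \Cref{cor:obstruction-trichotomy}.
\begin{enumerate}
\item If $F^\circ$ is nonlinear, take $H=L_\lambda$ from \Cref{cor:all-linear}. It is linear with $\chi=\lambda$, and a linear host cannot contain two edges sharing two vertices, so it omits $F$.
\item If $F^\circ$ is linear but has no bridge selector, take $H=\Lift(K_\lambda,\lambda)$. Since $\chi(K_\lambda)=\lambda$, \Cref{thm:lift-chromatic} gives $\chi(H)=\lambda$. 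The necessity half of \Cref{thm:bridge-trace} (with \Cref{rem:bridge-trace-isolated}) shows that $H$ omits $F$, because any embedding would yield a bridge selector.
\item If $F^\circ$ is linear, has a bridge selector, and has an odd Berge cycle of length $m=2s+1\ge3$, use \Cref{thm:EH-odd-girth} to choose $A$ with $|V(A)|=\chi(A)=\lambda$ and no odd cycle of length at most $m$. In particular $A$ has no $C_m$ and has an edge. Set $H=\Lift(A,\lambda)$. Then $\chi(H)=\lambda$ by \Cref{thm:lift-chromatic}. If $F$ embedded in $H$, \Cref{thm:bridge-trace} would give a selector $p$ with every derivative embedding in $A$. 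But \Cref{lem:cycle-selector}(iv) puts an ordinary $C_m$ in some derivative $D_C(F,p)$, hence in $A$, a contradiction.
\end{enumerate}
In each alternative $H$ is $F$-free and exact-$\lambda$-chromatic, so every uncountable $\lambda$ lies in $\Spec(F)$.

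There is no real obstacle, since everything reduces to earlier results. The one point to check is that \Cref{thm:lift-chromatic} and \Cref{thm:EH-odd-girth} hold for arbitrary, possibly singular, uncountable $\lambda$. The paper notes that the lift argument uses no regularity, and the Erd\H{o}s--Hajnal input is stated for every uncountable $\kappa$. In the nonlinear case the needed exactness for limit $\lambda$ is supplied by \Cref{cor:all-linear} itself.
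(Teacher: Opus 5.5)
Your proposal is correct and follows the paper's proof essentially step for step. You reduce to $F^\circ$, use \Cref{thm:classification} when $F\in\Bclass$, and for $F\notin\Bclass$ go through the three alternatives of \Cref{cor:obstruction-trichotomy} using the hosts $L_\lambda$, $\Lift(K_\lambda,\lambda)$, and $\Lift(A,\lambda)$ over an exact-$\lambda$ graph of high odd girth. The only cosmetic difference is that the paper treats the edgeless case separately, whereas you fold it into the $F\in\Bclass$ case through \Cref{thm:classification}.
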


\begin{proof}
Let $F^\circ$ be obtained from $F$ by deleting isolated vertices.  By
\Cref{lem:isolated-reduction}, $F$ and $F^\circ$ have the same spectrum
and the same membership status in $\Bclass$.  We may therefore assume
that $F$ has no isolated vertices.  If $E(F)=\varnothing$, then
$F\in\Bclass$ and every infinite host contains $F$, so
$\Spec(F)=\varnothing$ as asserted.  Hence assume $E(F)\ne\varnothing$.

If $F\in\Bclass$, then $F$ is obligatory by
\Cref{thm:classification}; hence no uncountably chromatic $F$-free
system exists.

Suppose $F\notin\Bclass$, and let $\lambda$ be any uncountable
cardinal.  Apply the obstruction trichotomy, \Cref{cor:obstruction-trichotomy}.

In case~\textup{(i)}, the exact-$\lambda$ linear system from
\Cref{cor:all-linear} omits $F$.

In case~\textup{(ii)}, $F$ has no bridge selector.  Consequently
$\Lift(K_\lambda,\lambda)$ is $F$-free by
\Cref{thm:bridge-trace,rem:bridge-trace-isolated}, and it has chromatic
number exactly $\lambda$ by \Cref{thm:lift-chromatic}.

In case~\textup{(iii)}, let $m$ be the length of an odd Berge cycle of
$F$ and put $s=(m-1)/2$.  Choose, by \Cref{thm:EH-odd-girth}, an
exact-$\lambda$ graph $A$ containing no odd cycle of length at most $m$,
in particular no $C_m$.  By \Cref{lem:cycle-selector}, every bridge
selector has a derivative containing an actual $C_m$.  Hence
\Cref{thm:bridge-trace,rem:bridge-trace-isolated} shows that
$\Lift(A,\lambda)$ omits $F$, and \Cref{thm:lift-chromatic} gives exact
chromatic number $\lambda$.
Thus every uncountable $\lambda$ belongs to $\Spec(F)$.
\end{proof}

\begin{corollary}[Erd\H{o}s Problem~\#1177]
\label{thm:1177}
The three assertions in the current formulation of Erd\H{o}s
Problem~\#1177 have truth values
\[
 \textnormal{yes},\qquad \textnormal{no},\qquad \textnormal{yes}.
\]
\end{corollary}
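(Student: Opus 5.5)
We handle the three clauses separately, using \Cref{thm:spectrum}, \Cref{thm:successor-linear} and \Cref{cor:compatibility}.

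\emph{Clause (1).} Suppose $F_G(\aleph_1)\neq\varnothing$. Then $\aleph_1\in\Spec(G)$, so $G$ is not obligatory and $G\notin\Bclass$ by \Cref{thm:classification}.

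We produce a small witness by going through the obstruction trichotomy \Cref{cor:obstruction-trichotomy}, applied to $G^\circ$; isolated vertices do not matter by \Cref{lem:isolated-reduction}.
\begin{enumerate}
\item In case~(i), $G^\circ$ is nonlinear. The system $L_{\aleph_1}$ of \Cref{thm:successor-linear} with $\mu=\aleph_0$ is linear, so it omits $G$. It has chromatic number $\aleph_1$ and at most $2^{2^{\aleph_0}}$ vertices.
\item In cases~(ii) and~(iii), the witnesses are lifts. Case~(ii) uses $\Lift(K_{\aleph_1},\aleph_1)$; case~(iii) uses $\Lift(A,\aleph_1)$ with $|V(A)|=\aleph_1$ from \Cref{thm:EH-odd-girth}. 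Each vertex set is $T(A,\aleph_1)\times V(A)$. We have $|T(A,\aleph_1)|\le\sum_{\alpha<\omega_1}\aleph_1^{|\alpha|}=\aleph_1^{\aleph_0}=2^{\aleph_0}$, so each lift has at most $2^{\aleph_0}\le 2^{2^{\aleph_0}}$ vertices.
\end{enumerate}
Hence the answer to (1) is yes. The cardinality estimate for $T$ is the only computation, and it is routine.

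\emph{Clause (2).} Take $G$ to be two triples sharing a pair and $H=C_7^{(3)}$.
\begin{enumerate}
\item $G$ is nonlinear, so $G\notin\Bclass$ and $F_G(\aleph_1)\ne\varnothing$ by \Cref{thm:spectrum}.
\item $C_7^+=C_7^{(3)}\notin\Bclass$ by \Cref{cor:compatibility}(3), so $F_H(\aleph_1)\neq\varnothing$ as well.
\end{enumerate}
Suppose some $X$ lies in both families. Since $X$ is $G$-free, no two edges of $X$ share two vertices, so $X$ is linear with $\chi(X)=\aleph_1$. The Hajnal--Komj\'ath theorem (E5), i.e.\ \Cref{cor:compatibility}(4), then forces $C_7^{(3)}\hookrightarrow X$, which is a contradiction. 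So the intersection is empty and the answer to (2) is no.

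\emph{Clause (3).} If $F_G(\kappa)\neq\varnothing$ for one uncountable $\kappa$, then $\Spec(G)\neq\varnothing$. By \Cref{thm:spectrum}, $\Spec(G)$ is then every uncountable cardinal, so the answer to (3) is yes.

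The main care needed is to match the current wording of Problem~\#1177 exactly, in particular the exact-$\aleph_1$ phrasing of clauses (1) and (2). For clause (2), the key point is that $G$-freeness is precisely linearity.
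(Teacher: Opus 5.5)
Your proposal is correct and follows the paper's proof essentially step for step. Clause~(1) goes through the obstruction trichotomy, using $L_{\aleph_1}$ in the nonlinear case and $\Lift(K_{\aleph_1},\aleph_1)$ or $\Lift(A,\aleph_1)$ of size $2^{\aleph_0}$ otherwise; clause~(2) rests on $T_0$-freeness being linearity together with Hajnal--Komj\'ath; clause~(3) is read off from \Cref{thm:spectrum}. The only cosmetic difference is that in clause~(2) you get nonemptiness of both families from \Cref{thm:spectrum} instead of naming the witnesses $L_{\aleph_1}$ and $\Lift(A,\aleph_1)$ over a $C_7$-free exact-$\aleph_1$ graph, which is the same argument repackaged.
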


\begin{proof}
\emph{Part~\textup{(1)}.}
Assume $F_G(\aleph_1)\ne\varnothing$.  Replace $G$ by $G^\circ$ if
necessary.  By \Cref{lem:isolated-reduction}, this neither changes the
avoidance class in any infinite host nor changes whether $G$ is
obligatory.  Since an exact-$\aleph_1$ $G$-free system exists, $G$ is not
obligatory, and hence $G\notin\Bclass$ by \Cref{thm:classification}.
In particular $E(G)\ne\varnothing$, because a finite edgeless system
embeds in every infinite host.  Apply
\Cref{cor:obstruction-trichotomy} to $G$.

In case~\textup{(i)}, use $L_{\aleph_1}$ from
\Cref{thm:successor-linear} with $\mu=\aleph_0$.  It is $G$-free, has
chromatic number $\aleph_1$, and has at most
$2^{2^{\aleph_0}}$ vertices.

In case~\textup{(ii)}, use $\Lift(K_{\aleph_1},\aleph_1)$.  In
case~\textup{(iii)}, use $\Lift(A,\aleph_1)$, where $A$ is an
exact-$\aleph_1$ graph on $\aleph_1$ vertices avoiding the corresponding
odd cycle.  In either linear case, $|V(A)|=\aleph_1$ and
$|E(A)|\le\aleph_1$.  Therefore
\[
 |T(A,\aleph_1)|
 \le (\aleph_1)^{<\aleph_1}
 = (\aleph_1)^{\aleph_0}
 = 2^{\aleph_0}.
\]
The equality is the explicit ZFC calculation
\[
 2^{\aleph_0}
 \le (\aleph_1)^{\aleph_0}
 \le (2^{\aleph_0})^{\aleph_0}
 =2^{\aleph_0}.
\]
The first inequality uses $2\le\aleph_1$, and the second uses
$\aleph_1\le2^{\aleph_0}$.  Finally,
$2^{\aleph_0}\cdot\aleph_1=2^{\aleph_0}$, so multiplication by
$|V(A)|=\aleph_1$ does not increase the cardinal.  Thus every linear
case has a witness of size at most $2^{\aleph_0}$, and the asserted
$2^{2^{\aleph_0}}$ bound follows.

\emph{Part~\textup{(2)}.}
Let
\[
 T_0=\bigl\{\{a,b,c\},\{a,b,d\}\bigr\}.
\]
Because our hypergraphs are simple, a triple system is $T_0$-free
exactly when it is linear: two distinct triples violate linearity
precisely when they share a pair.
Thus $L_{\aleph_1}$ witnesses $F_{T_0}(\aleph_1)\ne\varnothing$.

Let $C_7^{(3)}$ be the private-vertex expansion of the graph cycle
$C_7$; with indices modulo $7$, its edges are
\[
 \{x_i,x_{i+1},y_i\},
\]
where all displayed vertices are distinct.  Choose an exact-$\aleph_1$
graph $A$ with no $C_7$.  In the Levi graph of $C_7^{(3)}$, the two incidences to the core
vertices lie on the displayed Levi cycle, whereas the incidence to
$y_i$ is a bridge.  Hence the private incidences form the unique bridge
selector.  Equivalently, \Cref{lem:cycle-selector} shows that its unique
nontrivial derivative is the actual graph cycle $C_7$.  By
\Cref{thm:bridge-trace}, $\Lift(A,\aleph_1)$ is
$C_7^{(3)}$-free, and by \Cref{thm:lift-chromatic} it has chromatic
number $\aleph_1$.  Thus $F_{C_7^{(3)}}(\aleph_1)\ne\varnothing$.

Hajnal and Komj\'ath proved that $C_n^{(3)}$ is linearly obligatory for
$n\notin\{2,3,5\}$ \cite{HK2008}; see the precise restatement in
\cite[\S3.7, p.~43]{ReiherGirth}.  In particular, every uncountably
chromatic linear triple system contains $C_7^{(3)}$.  A common member of
$F_{T_0}(\aleph_1)$ and $F_{C_7^{(3)}}(\aleph_1)$ would be an
uncountably chromatic linear system omitting $C_7^{(3)}$, a
contradiction.  Their intersection is empty.

\emph{Part~\textup{(3)}.}
If $F_G(\kappa)\ne\varnothing$ for one uncountable $\kappa$, then $G$
is not obligatory, equivalently $G\notin\Bclass$.  By
\Cref{thm:spectrum}, $\Spec(G)$ contains every uncountable cardinal.
Hence $F_G(\lambda)\ne\varnothing$ for every uncountable $\lambda$.
\end{proof}

\section{Concluding perspective}

The bridge-trace theorem and the finite running-intersection argument
prove the classification of obligatory triple systems, resolving
Erd\H{o}s Problem~\#593.  Exact linear calibration is a separate
exact-cardinal result.  Combined with complete-rank lifts over
high-odd-girth graphs, it yields the all-or-nothing exact spectrum
theorem and the stated answers to Erd\H{o}s Problem~\#1177.

\appendix

\section{Exact interfaces to the imported theorems}
\label{app:external-interface}

This appendix records the source notation and the precise parameter
translations used in the body of the paper.  Every imported result listed
below is a theorem of ZFC.  None of the sources is used to infer more than
is stated here.

\subsection{The Erd\H{o}s--Hajnal--Rothschild obstruction}

In the notation of Erd\H{o}s, Hajnal, and Rothschild, the expression
\[
 R(\alpha,\beta,\gamma,k,i)
\]
denotes their partition property for $k$-uniform set systems with
intersection threshold $i$.  Their Theorem~2 on p.~532 states that, if
\[
 \beta=\omega_\xi,\qquad 3\le k<\omega,\qquad 2\le i\le k-1,
 \qquad
 \alpha=(\exp_{k-i}(\beta))^+,
\]
then $R(\alpha,\beta,2,k,i)$ fails
\cite[Theorem~2, p.~532]{EHR1973}.  We use only the standard specialization
$k=3$ and $i=2$: it yields an uncountably chromatic linear triple system,
and therefore every finite triple system containing two distinct edges
with at least two common vertices is non-obligatory.  This input supplies
an uncountable lower bound, not a prescribed exact chromatic cardinal; no
exactness is required in the proof of Problem~\#593.

\subsection{Exact high odd girth}

In the notation quoted as Theorem~C by Erd\H{o}s--Galvin--Hajnal, the
Erd\H{o}s--Hajnal theorem says that for every cardinal $\kappa>\aleph_0$
and every positive integer $i$ there is a graph $A$ such that
\[
 |V(A)|=\chi(A)=\kappa
 \quad\text{and}\quad
 C_{2j+1}\not\hookrightarrow A\quad(0<j<i).
\]
See \cite[Theorem~7.4, p.~76]{EH1966} and the verbatim restatement
\cite[Theorem~C, p.~428]{EGH}.  Substituting $i=s+1$ excludes every
ordinary, not necessarily induced, odd cycle of length at most $2s+1$.
This theorem gives equality $|V(A)|=\chi(A)=\kappa$, and it is used at the
same target cardinal as the lift.

\subsection{The simultaneous EGH labelling}

Definition~6.2 on p.~448 of Erd\H{o}s--Galvin--Hajnal defines
$P(S,I,\delta)$ by the quantifier order
\begin{equation}
 \boxed{
 \forall c\ \exists a\ \forall i\ \exists e
 }
 \label{eq:EGH-quantifier-order}
\end{equation}
with the meanings made explicit as follows: for every $\theta<\delta$ and
every colouring $c:V(S)\to\theta$, there is one colour $a<\theta$ such
that, for every label $i\in I$, some edge $e$ of label $i$ is contained in
$c^{-1}\{a\}$.  In particular, the colour $a$ is common to all labels; the
weaker order $\forall i\,\exists a_i\,\exists e$ is not what is used.

Their Corollary~9.7 on p.~461 states, in the original parameters, that if
$\rho$ is infinite,
\[
 \delta(\rho)=\min\{\delta:\rho^\delta>\rho\},
\]
and $1<n<\omega$, then
\[
 P(GS_n(\rho),\rho,\delta(\rho))
\]
holds \cite[Definition~6.2, p.~448 and Corollary~9.7, p.~461]{EGH}.  We
substitute $n=2$, transport the label set $\rho$ along a bijection onto
$I$, and then use monotonicity from $\delta(\rho)$ down to
$\kappa=\mu^+$.  The imported theorem supplies the labelled graph and the
simultaneous lower-colour property; it does not by itself assert the
chromatic number of the triple system constructed in
\Cref{sec:linear}.  Exactness there is proved separately by
\Cref{lem:calibration-upper,lem:calibration-lower}.

\subsection{Reiher's obligatory expansions}

Reiher's Theorem~1.2 states that the $k$-uniform private expansion of
$K_{n,n}$ is obligatory for every finite $n$ and every finite uniformity
$k\ge2$ \cite[Theorem~1.2]{ReiherObligatory}.  We substitute $k=3$.
Obligatoriness means containment in every host of uncountable chromatic
number; it is not an exact-cardinal assertion.  Passing to finite
subhypergraphs then gives obligatoriness of $J^+$ for every finite
bipartite graph $J$.

\subsection{The linearly obligatory loose seven-cycle}

A finite triple system is \emph{linearly obligatory} when it occurs in
every linear triple system of uncountable chromatic number.  Hajnal and
Komj\'ath prove that the loose/private-vertex cycle $C_n^{(3)}$ is linearly
obligatory for $n\notin\{2,3,5\}$ \cite{HK2008}.  The precise statement
and the same cycle convention are recorded in Reiher's survey
\cite[\S3.7, p.~43]{ReiherGirth}.  We substitute $n=7$.  This result is
not an exact-cardinal theorem, but it applies in particular to every
exact-$\aleph_1$-chromatic linear triple system, which is exactly the use
made in Problem~\#1177(2).

\section*{Acknowledgements}

The author acknowledges the use of OpenAI's ChatGPT during the
preparation of this manuscript.  While it was used for ideation,
formulation, proof exploration and refinement, narrowing the search
space, programming, LaTeX formatting and other forms of orchestration,
the author nonetheless takes full responsibility for the accuracy of the
final contents of this paper.

The accompanying Lean formalization \cite{LeanFormalization} was developed
by the author with Aristotle (Harmonic); the author likewise takes full responsibility
for the formal statements' fidelity to the results of this paper.


\begin{thebibliography}{99}

\bibitem{deBruijnErdos}
N.~G. de Bruijn and P.~Erd\H{o}s,
\emph{A colour problem for infinite graphs and a problem in the theory of relations},
Nederl. Akad. Wetensch. Proc. Ser. A \textbf{54} = Indag. Math. \textbf{13}
(1951), 369--373.

\bibitem{Erdos1995}
P.~Erd\H{o}s,
\emph{On some problems in combinatorial set theory},
Publ. Inst. Math. (Beograd) (N.S.) \textbf{57(71)} (1995), 61--65.

\bibitem{Erdos593}
T.~F. Bloom,
\emph{Erd\H{o}s Problem \#593},
Erd\H{o}s Problems,
\url{https://www.erdosproblems.com/593},
accessed 23 June 2026.

\bibitem{EHR1973}
P.~Erd\H{o}s, A.~Hajnal, and B.~L. Rothschild,
\emph{On chromatic number of graphs and set-systems},
in \emph{Cambridge Summer School in Mathematical Logic}
(Cambridge, 1971), Lecture Notes in Mathematics, vol.~337,
Springer, Berlin--New York, 1973, pp.~531--538.

\bibitem{Va99}
B.~Bollob\'as et al. (collectors),
\emph{Some of Paul's Favorite Problems},
booklet circulated at the conference \emph{Paul Erd\H{o}s and his
Mathematics}, Budapest, July 1999, Problem~7.94, p.~14,
\url{https://web.math.pmf.unizg.hr/~vjekovac/EP/Some_of_Pauls_favorite_problems.pdf}.

\bibitem{Erdos1177}
T.~F. Bloom,
\emph{Erd\H{o}s Problem \#1177},
Erd\H{o}s Problems,
\url{https://www.erdosproblems.com/1177},
accessed 23 June 2026.

\bibitem{EGH}
P.~Erd\H{o}s, F.~Galvin, and A.~Hajnal,
\emph{On set-systems having large chromatic number and not containing
prescribed subsystems},
in \emph{Infinite and Finite Sets} (Keszthely, 1973),
Colloq. Math. Soc. J\'anos Bolyai, vol.~10, North-Holland, 1975,
pp.~425--513.

\bibitem{EH1966}
P.~Erd\H{o}s and A.~Hajnal,
\emph{On chromatic number of graphs and set-systems},
Acta Math. Acad. Sci. Hungar. \textbf{17} (1966), 61--99,
\href{https://doi.org/10.1007/BF02020444}{doi:10.1007/BF02020444}.

\bibitem{HK2008}
A.~Hajnal and P.~Komj\'ath,
\emph{Obligatory subsystems of triple systems},
Acta Math. Hungar. \textbf{119} (2008), no.~1--2, 1--13,
\href{https://doi.org/10.1007/s10474-007-6231-2}{doi:10.1007/s10474-007-6231-2}.

\bibitem{Komjath2008}
P.~Komj\'ath,
\emph{An uncountably chromatic triple system},
Acta Math. Hungar. \textbf{121} (2008), no.~1--2, 79--92,
\href{https://doi.org/10.1007/s10474-008-7179-6}{doi:10.1007/s10474-008-7179-6}.

\bibitem{Komjath2001}
P.~Komj\'ath,
\emph{Some remarks on obligatory subsystems of uncountably chromatic
triple systems},
Combinatorica \textbf{21} (2001), no.~2, 233--238,
\href{https://doi.org/10.1007/s004930100021}{doi:10.1007/s004930100021}.

\bibitem{WangDuanGerbnerKarim}
Y.~Wang, M.~Duan, D.~Gerbner, and H.~Hama Karim,
\emph{On the largest chromatic number of $F$-free hypergraphs},
preprint, 2026,
\href{https://arxiv.org/abs/2604.21551}{arXiv:2604.21551}.

\bibitem{ReiherObligatory}
C.~Reiher,
\emph{Obligatory hypergraphs},
Proc. Amer. Math. Soc., to appear,
\href{https://doi.org/10.1090/proc/17021}{doi:10.1090/proc/17021}.

\bibitem{ReiherGirth}
C.~Reiher,
\emph{Graphs of large girth},
preprint, 2024,
\href{https://arxiv.org/abs/2403.13571}{arXiv:2403.13571}.

\bibitem{LeanFormalization}
E.~Li,
\emph{Lean formalization of the resolutions of Erd\H{o}s Problems 593 and 1177},
GitHub repository,
\url{https://github.com/ericlisg/erdos-593-1177-lean},
2026.

\end{thebibliography}
\end{document}